\title[Viscosity solutions of elliptic branches]{On viscosity solutions to the Dirichlet problem for elliptic branches of nonhomogeneous fully nonlinear equations}
\author{Marco Cirant}
\address{Dipartimento di Matematica ``F. Enriques''\\ Universit\`a di Milano\\ Via C. Saldini 50\\ 20133--Milano, Italy}
\email{marco.cirant@unimi.it (Marco Cirant)}\thanks{Cirant partially supported by the Fondazione CaRiPaRo Project ``Nonlinear Partial Differential Equations: models, analysis, and control-theoretic problems''}
\author{Kevin R.\ Payne}
\address{Dipartimento di Matematica ``F. Enriques''\\ Universit\`a di Milano\\ Via C. Saldini 50\\ 20133--Milano, Italy}
\email{kevin.payne@unimi.it (Kevin R. Payne)} \thanks{Payne partially supported the Gruppo Nazionale per l'Analisi Matematica, la Probabilit\`a e le loro Applicazioni (GNAMPA) of the Istituto Nazionale di Alta Matematica (INdAM)} \thanks{The authors wish to thank an anonymous referee for the constructive criticism of a previous version of this manuscript.}
\date{\today} \linespread{1.2}
\keywords{}
\subjclass[2010]{}
\newcommand{\F}{\mathcal{F}}
\newcommand{\Ss}{\mathcal{S}}
\newcommand{\R}{\mathbb{R}}
\newcommand{\N}{\mathbb{N}}
\newcommand{\Pd}{\mathcal{P}}
\newcommand{\dPd}{\widetilde{\Pd}}
\newcommand{\UC}{\mathrm{UC}}
\newcommand{\USC}{\mathrm{USC}}
\newcommand{\LSC}{\mathrm{LSC}}
\newcommand{\SA}{\mathrm{SA}}
\newcommand{\EC}[1]{\overrightarrow{#1}}
\newcommand{\ThetaD}{\widetilde{\Theta}}
\newcommand{\TSH}{\Theta \mathrm{SH}}
\newcommand{\TSHD}{\widetilde{\Theta} \mathrm{SH}}
\newcommand{\PSHD}{\widetilde{\mathcal{P}} \mathrm{SH}}
\newcommand{\veps}{\varepsilon}
\newtheorem{thm}{\textbf{Theorem}}[section]
\newtheorem{lem}[thm]{\textbf{Lemma}}
\newtheorem{prop}[thm]{\textbf{Proposition}}
\theoremstyle{remark}
\newtheorem{rem}[thm]{\textbf{Remark}}
\newtheorem{exe}[thm]{\textbf{Example}}
\theoremstyle{definition}
\newtheorem{defn}[thm]{\textbf{Definition}}
\newtheoremstyle{Claim}{}{}{\itshape}{}{\itshape\bfseries}{:}{ }{#1}
\theoremstyle{Claim}
\begin{document}

\maketitle

\begin{abstract}
For scalar fully nonlinear partial differential equations $F(x, D^2u(x)) = 0$ with $x \in \Omega \subset \subset \R^N$,  we present a general theory for obtaining comparison principles and well posedness for the associated Dirichlet problem with continuous boundary data. In particular, we treat {\em admissible viscosity solutions} $u$ of {\em elliptic branches} of the equation, where $F(x, \cdot)$ need not be monotone on all of $\Ss(N)$, the space of symmetric $N \times N$ matrices. This is accomplished by exploiting the fundamental ideas of Krylov \cite{Kv95} and following the program initiated by Harvey and Lawson \cite{HL09} in the homogeneous case when $F$ does not depend on $x$. An elliptic branch of the equation is encoded by a set valued map $\Theta$ from  $\Omega$ into the {\em elliptic subsets} of $\Ss(N)$ along which $F$ needs to be monotone and the degenerate elliptic PDE is replaced by the differential inclusion $D^2u(x) \in \partial \Theta(x)$ while subsolutions correspond to $D^2u(x) \in \Theta(x)$. Weak solutions to such differential inclusions are defined by using the notion given in \cite{HL09} in a pointwise manner. If $\Theta$ is uniformly upper semicontinuous, we show that the comparison principle holds for these weak solutions and that Perron's method yields a unique continuous solution to the {\em abstract Dirichlet problem} for $\Theta$-harmonic functions provided that the boundary is suitably convex with respect to the elliptic map $\Theta$ and its dual $\widetilde{\Theta}$ in the sense of Harvey and Lawson. When $\Theta$ encodes an elliptic branch of a given PDE, these $\Theta$-harmonic functions are shown to be admissible viscosity solutions of the PDE problem.  Various applications are described in terms of structural conditions on $F$ which ensure the existence of the needed elliptic map $\Theta$. Examples include {\em non-totally degenerate equations} and equations involving the eigenvalues of the Hessian and their perturbations. In certain situations, the methods employed here will be shown to operate freely, while classical viscosity approaches may not.
\end{abstract}

\setcounter{tocdepth}{1}
\tableofcontents

\section{Introduction}

In this work, we will study the Dirichlet problem for second order fully nonlinear PDE of the form
\begin{equation}\label{FNLN}
F(x, D^2 u(x)) = 0, \ \ x \in \Omega
\end{equation}
\begin{equation}\label{DBCs}
u(x) = \varphi(x), \ \ x \in \partial \Omega
\end{equation}
where $\Omega \subset \R^N$ is a bounded open domain with $C^2$ boundary, $F$ is a continuous function of its arguments and $\varphi$ a given continuous function. In particular, we will study the validity of the comparison principle for \eqref{FNLN} and the well posedness of the Dirichlet problem \eqref{FNLN}-\eqref{DBCs} by way of a Perron method for {\em admissible viscosity solutions} of {\em elliptic branches} of the PDE \eqref{FNLN}. As the stated aims imply, we will be interested in equations for which $F(x,\cdot)$ is not monotone on all of $\Ss(N)$ (the space of symmetric $N \times N$ matrices) and hence the need to select suitable elliptic branches by making use of Krylov's general notion of ellipticity \cite{Kv95}. The branch will be encoded by a function
\begin{equation}\label{BFI}
\Theta: \Omega \to \wp(\Ss(N)) = \{ \Phi: \Phi \subset \Ss(N) \}
 \end{equation}
whose values $\Theta(x)$ must be an {\em elliptic set}; that is, an non empty, closed, and proper subset of $\Ss(N)$ which is stable under sums with non-negative elements in $\Ss(N)$. The PDE will be replaced by the differential inclusion
\begin{equation}\label{DIF}
     \mbox{$D^2u(x) \in \partial \Theta(x)$ for each $x \in \Omega$}
\end{equation}
and $\Theta$ will give an elliptic branch of \eqref{FNLN} provided that
\begin{equation}\label{branch}
    \mbox{$ \partial \Theta(x) \subset \Gamma(x) := \{ A \in \Ss(N): \ F(x,A) = 0 \}$ for each $x \in \Omega$,}
\end{equation}
where obviously one assumes that $\Gamma(x) \neq \emptyset$. When $u$ is merely continuous, the notion of admissible viscosity solution will give a meaning to \eqref{DIF}. We remark that we will often think of the function $\Theta$ in \eqref{BFI} as a {\em set-valued map}; that is, as a multi-valued map
\begin{equation}\label{MVBFI}
    \Theta: \Omega \multimap \Ss(N)
\end{equation}
which is {\em strict} in the sense that $\Theta(x) \neq \emptyset$ for each $x \in \Omega$ (see Chapter 1 of Aubin and Cellina \cite{AC84} for the elementary notions concerning set-valued maps). We will use the same notation $\Theta$ for these two interpretations where the only real difference here concerns how to formulate their regularity properties. 

To carry out this program, we will adapt the recent approach of Harvey and Lawson \cite{HL09} to such questions for homogeneous equations $F(D^2u) = 0$. Our intent is twofold: generalize their approach to allow for explicit $x$-dependence in the equation and strengthen the connections of this approach with standard viscosity theory, which will result in some situations for which our methods operate freely while the classical viscosity methods do not. For example, we will be able to treat directly perturbed Monge-Amp\`ere type equations of the form 
\begin{equation}\label{PMAE}
    \left[ {\rm det}(D^2u(x) + M(x)) \right]^{1/N} = f(x),
\end{equation}
with $M$ a uniformly continuous $\Ss(N)$ valued function and $f$ a non-negative uniformly continuous real valued function. The standard structural conditions which ensure the validity of the comparison principle for viscosity solutions of \eqref{PMAE} may fail (see the conditions \eqref{caba1}-\eqref{caba2} and Example \ref{SC_CAB}). While taking the logarithm of \eqref{PMAE} converts it into a Bellman equation suitable for the viscosity theory, obviously one cannot allow $f$ to vanish on $\Omega$ and we will require only that $f \geq 0$ (see Remark \ref{PMA2}). Moreover, we can also treat certain Bellman operators with milder regularity assumptions (see Remark \ref{linear_eqs} and Example \ref{exe:linear}). With these motivations in mind, we will describe the results we obtain and make additional comparisons with the related literature.

We begin with a brief discussion of the method initiated by Harvey and Lawson in \cite{HL09} and how it will be generalized here. Following Krylov's lead, they shift attention from the particular PDE \eqref{FNLN} to the differential inclusion \eqref{DIF} and introduce a notion of weak solutions (as well as weak subsolutions and supersolutions) in terms of the {\em elliptic map} $\Theta$ of \eqref{BFI}, taking values in the elliptic subsets $\mathcal{E}$ of $\Ss(N)$. In \cite{HL09}, this map is constant. For $u \in C^2(\Omega)$, being {\em $\Theta$-subharmonic in $x$} just means
\begin{equation}\label{SHR}
    D^2u(x) \in \Theta(x).
\end{equation}
For $u$ merely upper semicontinuous on $\Omega$, being $\Theta$-subharmonic in $x$ requires that $u + v$ be {\em subaffine in $x$} for each $v \in C^2(\Omega)$ which belongs to a natural space of test functions. These test functions are $\widetilde{\Theta}$-subharmonic in $x$ where $\widetilde{\Theta}(x)$ is the {\em dual set} which is defined in a set theoretic way (see \eqref{DM}) and is also an elliptic set for each $x$. Subaffinity in $x$ means the validity of a weak comparison principle with respect to affine functions on each sufficiently small neighborhood of $x$ (see Definition \ref{def:Weak_SH} also for the definitions of $\Theta$-superharmonic and $\Theta$-harmonic functions). In this way, one can treat the validity of a comparison principle and attempt a Perron method for the {\em abstract} Dirichlet problem involving $\Theta$-harmonic functions for any elliptic map $\Theta$ and not necessarily one that comes from any particular PDE such as \eqref{FNLN}.

In the homogeneous case in which $\Theta$ is constant, \cite{HL09} proves the validity of the comparison principle for these weak solutions and obtains well-posedness of the Dirichlet problem for domains which are suitably convex with respect to $\Theta$ and $\widetilde{\Theta}$, which ensures the existence of suitable upper and lower solutions (barriers) needed for the Perron scheme. This convexity is expressed in terms of global defining functions for the boundary $\partial \Omega$ which are $C^2$ strictly $\EC{\Theta}$, $\EC{\widetilde{\Theta}}$-subharmonic, where $\EC{\Theta}$ and $\EC{\widetilde{\Theta}}$ are {\em elliptic cones} defined by the elliptic maps (see \eqref{defn:ass_cone}). Here, in the non homogeneous case in which $\Theta$ is a non-constant elliptic map, we prove the validity of the comparison principle provided that $\Theta$ thought of as a multi-valued map \eqref{MVBFI} is {\em uniformly upper semicontinuous} (see Theorem \ref{thm:CP}). The proof is an adaptation of the argument used in \cite{HL09} which makes use of the so-called subaffine theorem (see Theorem \ref{thm:SAT}), which we show to hold under the regularity property of uniform upper semicontinuity. This regularity property means that for every $\veps > 0$ there exists $\delta = \delta(\veps)$ such that
\begin{equation}\label{UUSC_def}
  \mbox{$x,y \in \Omega$ with $|x-y| < \delta \ \Rightarrow \ \Theta(x) + \veps I \subset \Theta(y)$ \ \ and \ \ $\Theta(y) + \veps I \subset \Theta(x)$.}
\end{equation}
For elliptic maps $\Theta$, we show that the regularity property \eqref{UUSC_def} has many useful consequences. It is passed on to the dual map $\widetilde{\Theta}$ in Proposition \ref{uuscd} and in Proposition \ref{UCHD} is shown to be equivalent to the {\em uniform continuity} of $\Theta$ in the interpretation \eqref{BFI} if one places the metric topology coming from the Hausdorff distance on the closed subsets of $\Ss(N)$. This characterization allows one to extend such maps to uniformly upper semicontinuous maps on all of $\overline{\Omega}$ (see Proposition \ref{extension}). This is useful for the PDE applications as the needed structural conditions on $F$ can be stated with reference to the open domain $\Omega$. The regularity property \eqref{UUSC_def} also implies that the natural {\em elliptic cone maps} used to describe the boundary convexity for Perron's method are constant maps (see Proposition \ref{uusc_cone}). Finally, under the regularity property \eqref{UUSC_def}, in Theorem \ref{thm:EU} we implement Perron's method to find the well-posedness of the abstract Dirichlet problem for (weakly) $\Theta$-harmonic functions with prescribed continuous boundary values, including a description of what constitutes an admissible domain in terms of the needed convexity which ensures the existence of suitable barriers.

We now describe the connections of this abstract theory to viscosity solutions of elliptic branches of \eqref{FNLN} and the resulting applications. What needs to be done is to describe when a given PDE will admit an elliptic branch corresponding to a uniformly upper semicontinuous map $\Theta$ and show that the weak solutions associated to $\Theta$ are viscosity solutions of \eqref{FNLN}. To this end, we formalize a definition of {\em $\Phi$-admissible viscosity solutions} to \eqref{FNLN} where $\Phi: \Omega \to \wp(\Ss(N))$ is any elliptic map along which $F$ increases and for which $\Phi(x)$ intersects the zero locus $\Gamma(x)$ defined in \eqref{branch} (see Defintion \ref{Vs_def}). In Proposition \ref{pick_branch}, we give sufficient conditions on $F$ so that
\begin{equation}\label{PBE}
    \Theta(x):= \{ A \in \Phi: F(x,A) \geq 0 \}
\end{equation}
defines an elliptic branch of \eqref{FNLN}. The ellipticity of $\Theta$ will be automatic, while the branch condition \eqref{def_branch2} which implies \eqref{branch} needs to be checked in the applications. We study the equivalence between $\Phi$-admissible viscosity solutions to $F(x, D^2u) = 0$ and $\Theta$-harmonic functions for $\Theta$ defined by \eqref{PBE} in Proposition \ref{SHCVS}.

At this point, it remains only to find a structural condition on $F$ that ensures that $\Theta$ is uniformly upper semicontinuous. One such condition is given in Proposition \ref{UCbranch} and it asks that for all small $\veps > 0$ there exists $\delta = \delta(\veps)$ such that
\begin{equation}\label{UCFI}
F(y,A + \veps I) \geq F(x,A) \quad \forall A \in \Phi(x), \forall x,y \in \Omega \ {\rm such \ that \ } |x - y| < \delta.
\end{equation}
In the special case $F(x,A) = G(A) - f(x)$ with $G$ increasing along $\Phi$ and $f$ uniformly continuous, the condition \eqref{UCFI} is satisfied provided that $G$ is {\em non-totally degenerate} in some sense (see Example \ref{exe:FGf}). Additional examples are studied in Section \ref{examples} which include equations involving the eigenvalues of the Hessian $\lambda_k(D^2u)$ and their perturbations such as equation \eqref{PMAE}. Finally, it is noted that linear equations do not fit naturally into this theory as unbounded Hessians on the elliptic sets created problems, but they can be forced to fit in rather convoluted way by truncating the equation with well chosen Bellman operators. This is sketched at the end of Section \ref{examples}.

We conclude this introduction with a few additional remarks and comparisons with the literature. A first crucial point is that we have not assumed that $F(x,A)$ is monotone in $A$ on all of $\Ss(N)$. This cuts us off from a large portion of the classical viscosity literature such as summarized in Crandall-Ishii-Lions \cite{CrIsLiPL92} and this lack of global monotonicity is the reason that one considers the elliptic branches in the sense of Krylov. As noted above, classical viscosity approaches have been employed without monotonicity on all of $\Ss(N)$, but they are typically ad hoc approaches which exploit the particular structure of  special but important classes of equations, beginning with those of Monge-Amp\`ere type as treated in Section V.3 of  Ishii-Lions \cite{IsLiPL90} or for prescribed curvature equations as treated in the paper of Trudinger \cite{Tr90}. The approach of Harvey and Lawson which is followed here aims at a general theory when $F$ is not monotone in $A$ on all of $\Ss(N)$.

A second crucial point here is that we consider equations $F(x, D^2u)=0$ without explicit dependence on $u$ and its gradient $Du$. This has several consequences. In the proof of the comparison principle, one does not need to appeal to the so-called Theorem on Sums which is needed in the classical scheme of doubling variables and then penalizing, as introduced in Ishii \cite{Is89}. The subaffine theorem takes its place in the Harvey-Lawson scheme \cite{HL09}. In particular, in their subsequent investigations such as \cite{HL11}, when they add in dependence on $(u, Du)$ and consider equations on Riemannian manifolds, the lack of a maximum principle for subaffine functions in that context leads them to make use of the doubling variables-penalizing scheme and the Theorem on Sums. Hence if one attempts to apply directly the generalizations in \cite{HL11} in our situation, one will need the classical structural conditions. In this sense, we stay close to the original ideas of \cite{HL09} by being able to use their duality to define the relevant space of test functions and the subaffine theorem reduces to the case of semi-convex functions by way of the {\em sup-convolution} and the critical step in the semi-convex case is treated by way of {\em Slodkowski's largest eigenvalue theorem}, which takes the place of {\em Jensen's lemma} in the classical approach (see the discussion after Lemma \ref{SA_ae}).

Other classes of equations for which the comparison principle can be proven without the Theorem on Sums scheme include $F(Du,D^2u)=0$, as studied by Barles-Busca \cite{BaBu11}. This class is clearly complementary to the one considered here and assumes the monotonicity in $A$ on all of $\Ss(N)$. It does share the feature however of not having explicit $u$ dependence which means that one can never have strict monotonicity in $u$, which is known to temper the demands on ellipticity for the validity of the comparison principle. To compensate for the lack of strict monotonicity in $u$, something akin to non-total degeneracy is requested in \eqref{UCFI}, as explained above. Moreover, \eqref{UCFI} appears to rule out the need for Lipschitz regularity conditions in $x$, at least in the linear case as noted in Remark \ref{linear_eqs}. Other recent attempts to address the comparison principle in the absence of strict monotonicity in $u$ include Kawohl-Kutev \cite{KaKu07} and Luo-Eberhard \cite{LoEb05}, where again monotonicity on all of $\Ss(N)$ is used. Moreover, ignoring that difference for a moment, the structural condition (1.6) of \cite{KaKu07} cannot be satisfied for equations independent of $(u,Du)$. The structural conditions of \cite{LoEb05} see more similar to ours, but again the presence of $(u,Du)$ again plays a role.

There is of course also an extensive literature for treating fully nonlinear equations of the form \eqref{FNLN} by means other than viscosity techniques. This is particularly true for special classes of equations such as those of Monge-Amp\`ere type. For example, maximum principles and the continuity method are employed in Section 8 of Caffarelli-Nirenberg-Spruck \cite{CNS84} to treat smooth solutions of equations such as the perturbed Monge-Amp\`ere equation \eqref{PMAE} with $f > 0$. There is also an extensive literature on equations which can be put into Bellman or Bellman-Isaacs form. In particular, Krylov \cite{Kv87} uses this approach to treat strong solutions of \eqref{PMAE} on balls if $f \geq 0$ and $M$ are $C^2$ (see Example 8.2.4 and the comments in Section 8.9 of \cite{Kv87}). Moreover, the solutions can be shown to be $C^{3,\alpha}$ up to the boundary if $f > 0$.  A key point is that the solutions should be semi-convex, lying on the elliptic branch $D^2 u + M \geq 0$ which is the branch we also consider. Finally, one should consider Krylov's pioneering paper \cite{Kv95} (see also the related \cite{Kv94}) on elliptic branches in which well-posedness results for the Dirichlet  problem for equations involving elementary symmetric polynomials in the eigenvalues of $D^2u$ are shown. An important point there is that when the elliptic sets $\Theta(x)$ (or their complements) are convex, then canonical forms of the elliptic branch can be converted into Bellman form, to which solvability results for general nonlinear PDE by barrier techniques apply. We make no assumption on the convexity of $\Theta(x)$. Examples such as the prescribed $k$-th eigenvalue equation of Theorem \ref{thm:PkthE} have branches which are not convex. 

\section{Viscosity solutions of elliptic branches}\label{viscosity_solns}

In this section we will review how to associate an elliptic branch in the sense of Krylov \cite{Kv95} to a fully nonlinear PDE of the form $F(x, D^2u) = 0$ and show how the classical notion of viscosity solutions (and subsolutions) can be expressed in terms of a certain set valued map $\Theta$ taking values in the so-called {\em elliptic subsets} $\mathcal{E}$ of $\mathcal{S}(N)$. The key concepts are the notion of {\em duality} introduced by Harvey and Lawson \cite{HL09} for homogeneous equations (which will be used pointwise in $x$ to define a dual map $\widetilde{\Theta}$)  and the role of {\em subaffine} functions (which satisfy a weak comparison principle with respect to affine functions).

In all that follows, $\mathcal{S}(N)$ denotes the space of symmetric $N \times N$ matrices, which carries the usual partial ordering of the associated quadratic forms and $\lambda_1(A) \leq \cdots \leq \lambda_N(A)$ denote the eigenvalues of $A \in \Ss(N)$. We will denote by $\wp(\Ss(N)):= \{ \Phi: \Phi \subset \Ss(N) \}$ and use the notations $\overline{\Phi}, \Phi^{\circ}$ and $\Phi^c$ for the closure, interior and complement of $\Phi \in \wp(\Ss(N))$.

\subsection{Elliptic branches and elliptic maps}\label{branches}

Let $\Omega \subset \subset \R^N$ and $F: \Omega \times \Ss(N)$ be continuous and consider the fully nonlinear PDE
\begin{equation}\label{FNE}
F( x, D^2 u(x) ) = 0, \quad x \in \Omega .
\end{equation}
The starting point for Krylov's geometric approach is to shift attention from the particular $F$ defining the PDE \eqref{FNE} to its zero locus
$$
\mbox{$\Gamma(x) = \{A \in \Ss(N) : F(x, A) = 0\}$ \quad with \quad $x \in \Omega,$}
$$
which is closed since $F$ is continuous and assumed to be non empty for each $x \in \Omega$. As is well described in \cite{Kv95}, there is much flexibility to be gained by this point of view as many different functions $F$ will give rise to the same collection $\{\Gamma(x)\}_{x \in \Omega}$. Examples of this flexibility for equations of the form \eqref{FNE} will be given in Section \ref{examples}. A {\em branch} of the PDE \eqref{FNE} is determined by the choice of a function
$$
    \Theta : \Omega \to \wp(\Ss(N))
$$
such that
\begin{equation}\label{bdy_Theta}
\partial \Theta(x) \subset \Gamma(x), \quad x \in \Omega
\end{equation}
and the PDE \eqref{FNE} is replaced by the differential inclusion
\begin{equation}\label{diff_inclusion}
D^2 u(x) \in \partial \Theta, \quad  x \in \Omega.
\end{equation}
The meaning of \eqref{diff_inclusion} is clear if $u\in C^2(\Omega)$ but less so if $u$ is just continuous. The {\em ellipticity} of a branch is defined in terms of the map $\Theta$ which must take values in the {\em elliptic subsets} of $\Ss(N)$
\begin{equation}\label{ell_subsets}
\mbox{ $\mathcal{E} := \left\{ \Phi \subset \Ss(N): \Phi \right.$ is closed, non empty, proper and
    $\left. A + P \in \Phi, \ \forall A \in \Phi, P \in \mathcal{P} \right\} $,}
\end{equation}
where
\begin{equation}\label{pos_subsets}
\mathcal{P}:= \{ P \in \Ss(N): P \geq 0 \} = \{ P \in \Ss(N): \lambda_1(P) \geq 0 \},
\end{equation}
The elliptic subsets here were called {\em Dirichlet sets} in \cite{HL09} and an exhaustive list of their elementary properties properties can be found in Section 3 of their paper \cite{HL09}. We have borrowed the term elliptic subset from Krylov \cite{Kv95}, but one should note that his elliptic subsets are open subsets.

\begin{defn}\label{ellipticity} Let $\Theta: \Omega \to \wp(\Ss(N))$.
\begin{itemize}
\item[(a)] $\Theta$ is said to be {\em elliptic at $x \in \Omega$} if $\Theta(x) \in \mathcal{E}$; that is, if
\begin{equation}\label{elliptic1}
\mbox{ $\Theta(x)$ is a closed, non empty and proper subset of $\Ss(N)$}
\end{equation}
and
\begin{equation}\label{elliptic2}
\Theta(x) + \mathcal{P} \subset \Theta(x).
\end{equation}
$\Theta$ is said to be an {\em elliptic map} \footnote{Thinking of $\Theta: \Omega \multimap \Ss(N)$ as a multi-valued map, ellipticity just means that $\Theta(x) \in \mathcal{E}, \forall \ x \in \Omega$.} if this holds for each $x \in \Omega$; that is, if $\Theta: \Omega \to \mathcal{E}$.
\item[(b)] The differential inclusion \eqref{diff_inclusion} is said to be {\em elliptic} if $\Theta$ is an elliptic map.
\item[(c)] An {\em elliptic branch} of \eqref{FNE} is defined by an elliptic map $\Theta$ for which \eqref{bdy_Theta} holds; that is,
$$
\partial \Theta(x) \subset \Gamma(x), \quad x \in \Omega.
$$
\end{itemize}
\end{defn}
Obviously the {\em positivity condition} \eqref{elliptic2} placed on the elliptic map $\Theta$ is related to the familiar condition of {\em degenerate ellipticity} of the PDE \eqref{FNE} in the sense of monotonicity of $F(x,A)$ with respect to $A$. This leads to the first class of general examples of equations \eqref{FNE} for which one can select an elliptic branch. 

\begin{prop}\label{pick_branch} Let $\Phi$ be an elliptic map on $\Omega$ and let $F \in C(\Omega \times \Ss(N), \R)$ be such that $F$ restricted to $\Phi$ is degenerate elliptic; that is,
\begin{equation}\label{deg_ell}
F(x,A) \geq F(x,B), \quad \forall x \in \Omega, \forall A, B \in \Phi(x) \ {\rm such \ that \ } A \geq B.
\end{equation}
and such that the zero locus $\Gamma(x)$ intersects $\Phi(x)$ for each $x \in \Omega$; that is,
\begin{equation}\label{F_strict}
\mbox{ $\forall x \in \Omega$ there exists $A \in \Phi(x)$ such that $F(x,A) = 0.$}
\end{equation}
Then the map $\Theta: \Omega \to \wp(\Ss(N))$ defined by
\begin{equation}\label{def_branch1}
\Theta(x) := \{ A \in \Phi(x):  \ F(x,A) \geq 0 \}
\end{equation}
is an elliptic map and defines an elliptic branch of \eqref{FNE} if, in addition, one has
\begin{equation}\label{def_branch2}
\mbox{ $\partial \Theta(x) \subset  \{ A \in \Ss(N):  F(x,A) \leq 0 \}$ for each $x \in \Omega$.}
\end{equation}
\end{prop}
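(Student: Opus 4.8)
The plan is to verify directly that, for each fixed $x \in \Omega$, the set $\Theta(x)$ defined in \eqref{def_branch1} satisfies the three requirements listed in \eqref{ell_subsets} for membership in $\mathcal{E}$ — closed, non empty, proper, and stable under addition of elements of $\mathcal{P}$ — and then to check the boundary condition \eqref{bdy_Theta} under the extra hypothesis \eqref{def_branch2}. The only delicate point is that degenerate ellipticity of $F$ is assumed to hold only along $\Phi$ (condition \eqref{deg_ell}), so any comparison of values of $F(x,\cdot)$ must be made between matrices that both lie in $\Phi(x)$; the stability of $\Phi(x)$ under addition of $\mathcal{P}$, which is part of the hypothesis that $\Phi$ is an elliptic map, is exactly what makes this legitimate.

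For the ellipticity of $\Theta$, fix $x \in \Omega$ and write $\Theta(x) = \Phi(x) \cap \{ A \in \Ss(N) : F(x,A) \geq 0 \}$. The set $\Phi(x)$ is closed because $\Phi$ is an elliptic map, and $\{ A : F(x,A) \geq 0 \}$ is closed by continuity of $F(x,\cdot)$, so $\Theta(x)$ is closed. It is non empty by \eqref{F_strict}: any $A \in \Phi(x)$ with $F(x,A) = 0$ belongs to $\Theta(x)$. It is a proper subset of $\Ss(N)$, since $\Theta(x) \subset \Phi(x) \neq \Ss(N)$. Finally, to check $\Theta(x) + \mathcal{P} \subset \Theta(x)$, take $A \in \Theta(x)$ and $P \in \mathcal{P}$. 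Then $A + P \in \Phi(x)$ because $\Phi(x)$ is elliptic, and since $A + P \geq A$ with both $A$ and $A+P$ in $\Phi(x)$, the degenerate ellipticity \eqref{deg_ell} gives $F(x, A+P) \geq F(x,A) \geq 0$; hence $A + P \in \Theta(x)$. This shows $\Theta(x) \in \mathcal{E}$ for every $x \in \Omega$, i.e. $\Theta$ is an elliptic map.

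It remains to prove \eqref{bdy_Theta} assuming \eqref{def_branch2}. Fix $x \in \Omega$ and let $A \in \partial \Theta(x)$. Since $\Theta(x)$ is closed, $\partial \Theta(x) \subset \Theta(x)$, so $A \in \Theta(x)$ and therefore $F(x,A) \geq 0$ by the definition \eqref{def_branch1}. On the other hand \eqref{def_branch2} gives $F(x,A) \leq 0$. Combining the two inequalities yields $F(x,A) = 0$, that is $A \in \Gamma(x)$, which is \eqref{bdy_Theta}. (One may also observe that $\Ss(N)$ is connected and $\Theta(x)$ is a non empty proper subset, so $\partial \Theta(x) \neq \emptyset$ and hence $\Gamma(x) \neq \emptyset$ automatically, so the elliptic branch is genuine.) I do not expect any real obstacle here beyond this bookkeeping: conceptually, restricting to $\Phi$ converts a possibly non-monotone $F$ into a monotone one, and the super-level set of a monotone function over an elliptic set is again an elliptic set — the one thing not to overlook being that \eqref{deg_ell} may not be applied to an arbitrary pair $A \leq A+P$ in $\Ss(N)$ without first knowing $A+P \in \Phi(x)$.
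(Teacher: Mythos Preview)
Your proof is correct and follows essentially the same approach as the paper for the ellipticity of $\Theta$: closedness via the intersection of closed sets, non-emptiness via \eqref{F_strict}, properness via $\Theta(x) \subset \Phi(x)$, and positivity via the combination of $\Phi(x) + \mathcal{P} \subset \Phi(x)$ with \eqref{deg_ell}.

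For the branch condition \eqref{bdy_Theta}, your argument is slightly more streamlined than the paper's. The paper derives the explicit decomposition
\[
\partial \Theta(x) = \bigl[\partial \Phi(x) \cap \{F(x,\cdot) \geq 0\}\bigr] \cup \bigl[\Phi(x) \cap \{F(x,\cdot) = 0\}\bigr]
\]
and then applies \eqref{def_branch2} to the first piece, whereas you bypass this by simply noting that $\partial \Theta(x) \subset \Theta(x)$ (since $\Theta(x)$ is closed) forces $F(x,A) \geq 0$, which together with \eqref{def_branch2} gives $F(x,A) = 0$. Both arguments are equally valid; the paper's explicit formula for $\partial \Theta(x)$ has the advantage of being reusable elsewhere (it reappears in the proof of Proposition \ref{SHCVS}), while yours is more economical for the purpose at hand.
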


\begin{proof} For each $x \in \Omega$, $\Theta(x) \neq \emptyset$ by \eqref{F_strict} and is not all of $\Ss(N)$ since $\Phi(x) \in \mathcal{E}$ is a proper subset by the definition \eqref{ell_subsets}. Moreover $\Theta(x)$ is closed since $\Phi(x) \in \mathcal{E}$ is closed and $F$ is continuous, where it would suffice to have $F(x, \cdot)$ upper semicontinuous for each $x$ fixed. Hence one has property \eqref{elliptic1} of an elliptic map. For the positivity condition \eqref{elliptic2}, notice that for each $x \in \Omega$ and for each $A \in \Theta(x) \subset \Phi(x)$ one has
$$
A + P \in\Phi, \quad \forall P \in \mathcal{P}
$$
and hence by \eqref{deg_ell} and \eqref{def_branch1} one has
$$
F(x, A + P) \geq F(x,A) \geq 0.
$$
Hence $\Theta$ is an elliptic map.

    This elliptic map will define an elliptic branch if
\begin{equation}\label{EBPhi}
\partial \Theta(x) \subset \Gamma(x) \cap \Phi(x), \quad x \in \Omega
\end{equation}
which is the condition \eqref{bdy_Theta} restricted to $\Phi$. One easily checks that
\begin{equation}\label{EBPhi2}
\partial \Theta(x) = \left[ \partial \Phi(x) \cap \{ A \in \Ss(N): F(x,A) \geq 0 \}\right] \bigcup \left[ \Phi(x) \cap \{ A \in \Ss(N): F(x,A) = 0 \}\right],
\end{equation}
which yields \eqref{EBPhi} if \eqref{def_branch2} holds since $\Phi(x)$ is closed by definition.
\end{proof}

A simple example is given by $F(x,A) = {\rm det}\,(A)$ and one could choose $\Phi \equiv \mathcal{P}$ so that
$$
\Theta(x) = \{ A \in \mathcal{P}: {\rm det}\,(A) \geq 0 \} = \mathcal{P}, \quad x \in \Omega.
$$
Of course such a case is exactly the kind of homogeneous equation treated in \cite{HL09}. Nonhomogeneous variants and other examples will be discussed in Section \ref{examples}. We remark that the elliptic branch condition \eqref{def_branch2} is not automatic, but it is satisfied in many examples. Finally, we remark that in certain situations, we can allow $\Phi(x) = \Ss(N)$ in Proposition \ref{pick_branch} and in the subsequent theory. Such will be the case for the applications in Theorem \ref{thm:PkthE} and Example \ref{exe:linear}. The point being that even if $F(x,\cdot)$ is monotone on all of $\Ss(N)$, the map $\Theta$ defined by \eqref{def_branch1} may be a proper subset. 

\subsection{Subsolutions of elliptic branches and elliptic maps}\label{subsolns}

We now discuss a notion of weak subsolution of an elliptic branch of the PDE \eqref{FNE} which will lead to the notion of weak supersolutions and weak solutions. We will show that these notions are equivalent to those from the classical theory of viscosity solutions. Following closely the approach of \cite{HL09} for homogeneous equations $F(D^2u) = 0$, the main point will be to define an appropriate notion of an upper semicontinuous subsolution to the differential inclusion \eqref{diff_inclusion}; that is, to interpret weakly the differential inclusion
\begin{equation}\label{sub_diff_incl}
D^2u(x) \in \Theta(x), \quad x \in \Omega
\end{equation}
where $\Theta$ is an (abstract) elliptic map, which need not be a priori linked to any particular PDE \eqref{FNE}. For $u$ with classical regularity, the notion is obvious.

\begin{defn}\label{SHReg} Let $\Theta$ be an elliptic map and $u \in C^2(\Omega)$. One says that
$u$ is \emph{$\Theta$-subharmonic on $\Omega$} if
\begin{equation}\label{ThetaSH_class}
D^2 u(x) \in \Theta(x) \quad \forall x \in \Omega .
\end{equation}
Moreover, $u$ is \emph{strictly $\Theta$-subharmonic on $\Omega$} if
\begin{equation}\label{Strict_ThetaSH_class}
D^2 u(x) \in [\Theta(x)]^{\circ} \quad \forall x \in \Omega.
\end{equation}
\end{defn}
We recall that by definition the set $\Theta(x)$ is closed in the natural topology of $\Ss(N)$ for each $x \in \Omega$. Moreover, each $\Theta(x) \in \mathcal{E}$ has non empty interior. In fact, each $A \in \Theta(x)$ can be written as the limit as $\veps \to 0^+$ of $A + \veps I \in \Theta(x) + \mathcal{P}^{\circ}$. In particular, one has
\begin{equation}\label{closure_interior}
\Theta = \overline{\Theta^{\circ}} \quad \text{for each} \ \Theta \in \mathcal{E}.
\end{equation}
In order to extend this definition to upper semicontinuous functions
$$
    \USC(\Omega) = \{ u: \Omega \to [-\infty, \infty): \ u(x_0) \geq \limsup_{x \to x_0} u(x),\ \forall \ x_0 \in \Omega\},
$$
and to have a notion of weak supersolution for lower semicontinuous functions
$$
    \LSC(\Omega) = \{ u: \Omega \to (-\infty, \infty]: \ u(x_0) \leq \liminf_{x \to x_0} u(x),\ \forall \ x_0 \in \Omega\},
$$
we will find it useful to formulate a definition of viscosity solutions with an {\em admissibility condition}\footnote{Although in slightly different forms, the term admissibility has been used previously in situations where $F$ may not be monotone on all of $\Ss(N)$; for example, see the introduction in Trudinger's study of prescribed curvature equations  \cite{Tr90} and Remark 2.25 of Krylov \cite{Kv95} regarding elliptic branches. } that will correspond to the attempt to select an elliptic branch when $F$ fails to be degenerate elliptic on all of $\Omega \times \Ss(N)$. We recall that the monotonicity in $A$ of $F(x,A)$ is used to show the {\em coherence property} that regular solutions will be also solutions in the viscosity sense.
\begin{defn}\label{Vs_def} Let $F: \Omega \times \Ss(N) \to \R$ be continuous and $\Phi: \Omega \to \mathcal{E}$ an elliptic map.
\begin{itemize}
\item[(a)] One says that $u \in \USC(\Omega) $ is a {\em $\Phi$-admissible viscosity subsolution of \eqref{FNE} in $\Omega$} if for every $x_0 \in \Omega$ and for each $\varphi \in C^2(\Omega)$ one has
\begin{equation}\label{Vss}
\mbox{ $ u - \varphi$ has a local maximum in $x_0 \Rightarrow F(x_0, D^2 \varphi(x_0)) \geq 0$ \ \ and \ \ $D^2 \varphi(x_0) \in \Phi(x_0)$,}
\end{equation}
where it is enough to consider $\varphi$ a quadratic function such that $\varphi(x_0) = u(x_0)$.
\item[(b)] One says that $u \in \LSC(\Omega) $ is a {\em $\Phi$-admissible viscosity supersolution of \eqref{FNE} in $\Omega$} if for every $x_0 \in \Omega$ and for each $\varphi \in C^2(\Omega)$ one has
\begin{equation}\label{VSs}
\mbox{ $ u - \varphi$ has a local minumum in $x_0 \Rightarrow F(x_0, D^2 \varphi(x_0)) \leq 0$ \ \  or \ \ $D^2 \varphi(x_0) \not\in \Phi(x_0)$,}
\end{equation}
where it is enough to consider $\varphi$ a quadratic function such that $\varphi(x_0) = u(x_0)$.
\end{itemize}
One says that $u \in C(\Omega)$ a {\em $\Phi$-admissible viscosity solution of \eqref{FNE} in $\Omega$} if both conditions (a) and (b) hold.
\end{defn}
Notice that if one allows $\Phi(x_0) = \Ss(N)$, then the condition $D^2 \varphi(x_0) \in \Phi(x_0)$ in \eqref{Vss} is automatically satisfied while the condition $D^2 \varphi(x_0) \not\in \Phi(x_0)$ in \eqref{VSs} is vacuous and one recovers the usual notions of viscosity sub and supersolutions (without restrictions). Furthermore, the condition $D^2 \varphi(x_0) \not\in \Phi(x_0)$ in \eqref{VSs} is natural as explained in Section V.3 of Ishii-Lions \cite{IsLiPL90} in the special case of $\Phi \equiv \mathcal{P}$ for equations of Monge-Amp\`ere type. When $F$ and $\Phi$ satisfy \eqref{deg_ell} and \eqref{F_strict}, it is easy to see that $u \in C^2(\Omega)$ will be a $\Phi$-admissible viscosity solution of \eqref{FNE} if
\begin{equation}\label{Phi_AVS}
\mbox{ $F(x, D^2 u(x)) = 0$ \ \ and \ \ $D^2 u(x_0) \in \Phi(x)$ \ \ for each \ $x \in \Omega$.}
\end{equation}
Under mild additional conditions, the converse is true (see the remark following Proposition \ref{coherence}).

If $u$ is \underline{not} of class $C^2$ near $x_0$, the condition \eqref{Vss} says that the failure of the strong maximum principle for $u - \varphi$ in some neighborhood of $x_0$ implies that the regular test function $\varphi$ must satisfy the differential inclusion
$$
    D^2 \varphi(x_0) \in \{ A \in \Phi(x_0): F(x_0, A) \geq 0 \}.
$$
Turning this around, one might ask: {\em if a regular test function $\varphi$ satisfies an appropiate differential inclusion must $u - \varphi$ satisfy some kind of maximum or comparison principle (which is independent of the regularity of $u$)?} The answer is yes, and is the basis of Harvey and Lawson's definition of weak subsolutions to \eqref{diff_inclusion}.  It is formulated for the sum $u + v$ where $v$ is a regular subsolution of an associated {\em dual map} $\widetilde{\Theta}$ and the comparison principle is with respect to affine functions; that is, $u + v$ must be {\em subaffine}. We proceed to describe these two needed notions.

The notion of the dual map is the following.
\begin{defn}\label{dual_map}
Let $\Theta$ be an elliptic map on $\Omega$. The {\em dual map} $\widetilde{\Theta}: \Omega \rightarrow \wp(\Ss(N))$
is defined by
\begin{equation}\label{DM}
\widetilde{\Theta}(x) = \left[  - \Theta(x)^{\circ} \right]^c = - \left[ \Theta(x)^{\circ} \right]^c.
\end{equation}
\end{defn}
We remark that we are merely taking the pointwise dual of the elliptic subset $\Theta(x)$ as defined in \cite{HL09}. Hence the dual map will inherit a number of known properties of the dual of an elliptic subset, such as {\em reflexivity}
\begin{equation}\label{reflexivity}
\widetilde{\widetilde{\Theta}} = \Theta
\end{equation}
and {\em conservation of ellipticity}
\begin{equation}\label{preserve_ell}
\mbox{ $\widetilde{\Theta}$ is elliptic $\Leftrightarrow \Theta$ is elliptic.}
\end{equation}
Indeed, \eqref{reflexivity} is a consequence of the aforementioned property \eqref{closure_interior}. For the property \eqref{preserve_ell}, notice that $\widetilde{\Theta}(x)$ will be closed, non empty and proper if $\Theta(x)$ is. It is not difficult then to show that the positivity property \eqref{elliptic2} passes from $\Theta$ to $\widetilde{\Theta}$ (see Section 4 of \cite{HL09}) and hence $\widetilde{\Theta}$ is elliptic if $\Theta$ is. The reflexivity \eqref{reflexivity} shows that the converse is true as well. A particularly important example of this duality concerns the constant elliptic map $\Theta(x) = \mathcal{P}$ for each $x \in \Omega$ where the dual satisfies
\begin{equation}\label{nonnegative}
\widetilde{\mathcal{P}} = \{ A \in \Ss(N): \lambda_N(A) \geq 0 \}.
\end{equation}
An important relation between an elliptic map and its dual map is the following property
\begin{equation}\label{sum_duals}
A \in \Theta(x), B \in \widetilde{\Theta}(x) \ \ \Rightarrow \ \ A + B \in \widetilde{\mathcal{P}},
\end{equation}
as follows from Lemma 4.3 of \cite{HL09}. This property is crucial to the understanding of various notions in the sequel. In particular, it suggests the definition of weakly $\Theta$-subharmonic functions (see formula \eqref{Weak_SH} of Definition \ref{def:Weak_SH}) in light of the {\em coherence property} of Proposition \ref{coherence} and the characterization \eqref{sat3} of subaffine functions. For regular functions, the property \eqref{sum_duals} is also the infinitesimal version of the so-called {\em subaffine theorem} (see Theorem \ref{thm:SAT}), which plays a key role in the {\em comparison principle} of Theorem \ref{thm:CP}. 

The notion of subaffinity is the following.
\begin{defn}\label{SA_defn} Let $\Omega \subset \R^N$ be an open set. A function $u \in \USC(\Omega)$ is said to be {\em subaffine on $\Omega$} if for every compact $K \subset \Omega$ and every affine function $a$
\begin{equation}\label{SACP}
u \leq a \text{ on } \partial K \Rightarrow u \leq a \text{ on } K.
\end{equation}
The set of all subaffine functions will be denoted by $\SA(\Omega)$.
\end{defn}

This notion can be described in a pointwise fashion by considering what happens if \eqref{SACP} fails. We record the following characterization of $\SA(\Omega)$ which is proven in Lemma 2.2 of \cite{HL09}.

\begin{lem}\label{lemma:nonSA} If $u \in \USC(\Omega)$ then $u$ fails to be subaffine on $\Omega$ if and only if for some point $x_0 \in \Omega$ there exists a triple $(\veps, r, a)$ with $\veps, r > 0$ and $a$ an affine function such that
\begin{equation}\label{e:nonSA}
\mbox{ $(u - a)(x_0) = 0$ \quad and \quad $(u-a)(x) \leq -\veps |x-x_0|^2, \ \forall \ x \in B_r(x_0)$.}
\end{equation}
\end{lem}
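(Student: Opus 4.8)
The plan is to prove both implications by carefully negating the defining condition \eqref{SACP}.

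First, for the ``if'' direction, suppose that \eqref{e:nonSA} holds at some $x_0$ with a triple $(\veps, r, a)$. I would exhibit a compact set $K$ and an affine function $\tilde a$ witnessing the failure of subaffinity. The natural choice is the closed ball $K = \overline{B_\rho(x_0)}$ for a suitably small $\rho \leq r$, and the affine function $\tilde a(x) = a(x) - \veps \rho^2/2$ (or some similar downward shift). On $\partial K = \partial B_\rho(x_0)$ one has $(u - a)(x) \leq -\veps \rho^2$, so $u(x) \leq a(x) - \veps \rho^2 \leq a(x) - \veps\rho^2/2 = \tilde a(x)$, giving $u \leq \tilde a$ on $\partial K$. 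But at the center $u(x_0) = a(x_0) > a(x_0) - \veps\rho^2/2 = \tilde a(x_0)$, so $u \leq \tilde a$ fails on $K$. Hence $u \notin \SA(\Omega)$. One must only check $\overline{B_\rho(x_0)} \subset \Omega$, which holds for $\rho$ small since $\Omega$ is open.

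For the ``only if'' direction, suppose $u$ fails to be subaffine: there is a compact $K \subset \Omega$ and an affine $a$ with $u \leq a$ on $\partial K$ but $u(x_1) > a(x_1)$ for some $x_1 \in K$, so $m := \max_K (u - a) > 0$, attained (since $u - a \in \USC(\Omega)$ and $K$ is compact) at some point $x_0$ in the interior of $K$ (it cannot be on $\partial K$ because there $u - a \leq 0 < m$). The idea is now to tilt $a$ into a strictly concave quadratic. Pick $r > 0$ with $\overline{B_r(x_0)} \subset \Int K$, and set $\veps > 0$ small enough that the new affine-plus-quadratic function $b(x) := a(x) + m - \veps r^2 - \veps|x - x_0|^2$ still dominates $u$ on $\partial B_r(x_0)$: indeed on $\partial B_r(x_0)$ one has $u - a \leq m - \eta$ for some margin $\eta$? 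Here is the subtlety --- the max on $K$ need not be attained only at $x_0$, so $u - a$ need not be bounded strictly below $m$ on $\partial B_r(x_0)$. Instead I would not subtract off a margin but argue directly: after shrinking $r$, compare $u$ with the affine function $\tilde a(x) := a(x) + m$. One has $(u - \tilde a)(x_0) = 0$ and $(u - \tilde a)(x) \leq 0$ on all of $K$. This is not yet the strict quadratic bound in \eqref{e:nonSA}. To get the quadratic decay, replace $\tilde a$ by $\hat a(x) := \tilde a(x) - \veps(\text{something})$ --- but $\hat a$ is no longer affine. The correct device is to observe that \eqref{e:nonSA} is really a statement about $u$ having a strict local quadratic touching from above by an affine function \emph{after} subtracting $\veps|x-x_0|^2$; equivalently, that $u - a + \text{const}$ has a strict quadratic maximum. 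So the genuine content to extract is: from a (possibly non-strict) interior maximum of $u - a$, produce a strict quadratic one. This is done by the standard trick of perturbing: the function $v := u - a + \veps|\cdot - x_0|^2$ still has a maximum on $\overline{B_r(x_0)}$, but now I must locate it and show it is interior.

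The main obstacle, and the heart of the argument, is precisely this localization: given that $u - a$ attains an interior maximum $m$ at $x_0$ over $K$, I need a ball $B_r(x_0)$ on which $u - a < m$ \emph{strictly} off $x_0$ — which in general is false — so instead I perturb and argue as follows. Fix any $r > 0$ with $\overline{B_r(x_0)} \subset \Int K$. For $\veps > 0$, let $w_\veps(x) := (u - a)(x) + \veps|x - x_0|^2 - m$; then $w_\veps \in \USC$, $w_\veps(x_0) = 0$, and $w_\veps \leq \veps r^2$ on $\overline{B_r(x_0)}$ with $w_\veps = \veps r^2$ only possible on $\partial B_r(x_0)$ where $u - a = m$. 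If $u - a < m$ throughout $\partial B_r(x_0)$ (true after possibly shrinking $r$, since otherwise the maximum set of $u-a$ would accumulate at $x_0$ along arbitrarily small spheres, contradicting... actually this needs care). The clean resolution: choose $r$ so small that $\max_{\partial B_r(x_0)}(u - a) < m$; such $r$ exists because if not, there are $x_k \to x_0$ with $(u-a)(x_k) \to m'\geq$ limsup... — here upper semicontinuity gives $\limsup_{x\to x_0}(u-a)(x) \leq (u-a)(x_0) = m$, but that does not forbid equality along a sequence. One truly cannot always get strict inequality on a sphere (e.g. $u - a \equiv 0$ near $x_0$). Therefore the honest statement is that \eqref{e:nonSA} is equivalent to failure of subaffinity only up to this kind of example — but in such a flat example, $u = a$ on a neighborhood, so $u$ \emph{is} subaffine locally, and the global failure must come from a genuinely strict-max region elsewhere; restarting the argument at the point where strictness does occur closes the loop. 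I would present this by first reducing to the case where the max is strict at $x_0$ in the weak sense needed, handling the flat case by relocating $x_0$, and then the quadratic perturbation $v := u - a + \veps|\cdot-x_0|^2$ with the affine function $\hat a := a + m$ produces exactly the triple $(\veps, r, \hat a)$ of \eqref{e:nonSA} on the chosen ball. I expect the write-up to hinge almost entirely on getting this localization clean; the rest is bookkeeping with $\USC$ functions on compact sets.
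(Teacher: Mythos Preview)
Your ``if'' direction is correct and essentially complete.

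For the ``only if'' direction, you have correctly identified the central difficulty: from an interior maximum of $u-a$ at $x_0$ one cannot directly extract the strict quadratic decay of \eqref{e:nonSA}, since $u-a$ may well be flat near $x_0$. However, your proposed resolution --- center the quadratic perturbation at $x_0$ and then ``relocate'' if flatness occurs --- does not close. Your final claim that $\hat a := a + m$ together with the perturbation $v = u - a + \veps|\cdot - x_0|^2$ produces the required triple is simply false: with this $\hat a$ one only obtains $(u-\hat a)(x) \le 0$ on $K$, not $(u-\hat a)(x) \le -\veps|x-x_0|^2$. And the relocation idea fails in general: the maximum set $S = \{x \in K : (u-a)(x)=m\}$ can be, for instance, a closed ball, so no point of $S$ is isolated and no amount of relocating inside $S$ will produce strict quadratic decay with the affine function $a+m$.

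The missing idea is to perturb by a strictly convex quadratic \emph{not} centered at the (unknown) maximum point, and then absorb the linear cross term into a new affine function. Concretely: for $\veps>0$ small enough that $\veps \sup_{x\in K}|x|^2 < m$, let $x_0$ maximize the $\USC$ function $(u-a)(x) + \veps|x|^2$ over $K$; the choice of $\veps$ forces $x_0 \in K^\circ$. For all $x \in K$ one then has
\[
(u-a)(x) \le (u-a)(x_0) + \veps\bigl(|x_0|^2 - |x|^2\bigr) = (u-a)(x_0) - \veps|x-x_0|^2 - 2\veps\langle x-x_0, x_0\rangle,
\]
so setting $\hat a(x) := a(x) + (u-a)(x_0) - 2\veps\langle x-x_0, x_0\rangle$ (which is affine) gives $(u-\hat a)(x_0)=0$ and $(u-\hat a)(x) \le -\veps|x-x_0|^2$ on any ball $B_r(x_0) \subset K$. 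The paper does not spell this out but cites Lemma 2.2 of \cite{HL09}, where this perturbation is carried out.
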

We remark that if such a triple exists, then \eqref{e:nonSA} continues to hold for each $\veps' \in (0, \veps]$ and each $r' \in (0, r]$. Hence the notion depends on an arbitrarily small neighborhood of the point $x_0 \in \Omega$ and the quadratic function in \eqref{e:nonSA} has arbitrarily small opening $\veps$. This suggests the following definition \footnote{We are merely making explicit what was implicit in \cite{HL09} and we will find this pointwise definition convenient for comparison with the viscosity theory.}. 

\begin{defn}\label{SAx0_defn} For $x_0 \in \Omega$, define
$$
\SA(x_0) := \left\{ u \in \USC(\Omega): \  \not\exists \ (\veps, r, a) \ {\rm such \ that \ } \eqref{e:nonSA} \ {\rm holds} \right\}.
$$
\end{defn}
Notice that Lemma \ref{lemma:nonSA} says that for $u \in \USC(\Omega)$ one has
$$
    u \in \SA(\Omega) \Leftrightarrow u \in \SA(x_0), \forall \ x_0 \in \Omega.
$$

Combining the notions of the dual map and subaffinity one can define the notions of weak solutions of the differential inclusions \eqref{sub_diff_incl} and \eqref{diff_inclusion}.

\begin{defn}\label{def:Weak_SH} Let $\Theta$ be an elliptic map on $\Omega$.
\begin{itemize}
\item[(a)] A function $u \in \USC(\Omega)$ is said to be \emph{$\Theta$-subharmonic in $x_0 \in \Omega$} if
\begin{equation}\label{Weak_SH}
\mbox{$u + v \in \SA(x_0)$ \quad for all $v \in C^2(\Omega)$ such that $D^2v(x_0) \in \widetilde{\Theta}(x_0)$}
 \end{equation}
and will said to be \emph{$\Theta$-subharmonic on $\Omega$} if \eqref{Weak_SH} holds for each $x_0 \in \Omega$. The spaces of such $\Theta$-subharmonic functions will be denoted by $\TSH(x_0)$ and $\TSH(\Omega)$ respectively.
\item[(b)] A function $u \in \LSC(\Omega)$ is said to be \emph{$\Theta$-superharmonic in $x_0 \in \Omega$} if $-u \in \TSHD(x_0)$ and \emph{$\Theta$-superharmonic in $\Omega$} if  $-u \in \TSHD(\Omega)$.
\item[(c)] A function $u \in C(\Omega)$ is said to be \emph{$\Theta$-harmonic in $x_0 \in \Omega$} if $u \in \TSH(x_0)$ and $-u \in \TSHD(x_0)$ and is said to be \emph{$\Theta$-harmonic in $\Omega$} if  $u \in \TSH(\Omega)$ and $-u \in \TSHD(\Omega)$.
\end{itemize}
\end{defn}
We recall that $\widetilde{\Theta}$ is elliptic if $\Theta$ is and hence the definitions (b) and (c) make sense. Moreover, since
\begin{equation}\label{bdy_Theta_char}
\partial \Theta = \Theta \cap (- \widetilde{\Theta}),
\end{equation}
the definition (c) is natural. In order to apply the above definition, the content of the following following remark will be used repeatedly.

\begin{rem}\label{use_TSH} Often we will show that $u \in \USC(\Omega)$ belongs to $\TSH(\Omega)$ by using an argument by contradiction. If $u \not\in \TSH(\Omega)$, then there exists $x_0 \in \Omega$ such that $u \not\in \TSH(x_0)$ and hence by Definition \ref{def:Weak_SH} and Lemma \ref{lemma:nonSA} there exist $v \in C^2(\Omega)$ with $D^2v(x_0) \in \widetilde{\Theta}(x_0)$ and a triple $(\veps, r, a)$ such that
$$
    \mbox{ $(u + v - a)(x_0) = 0$ \quad and \quad $(u + v -a)(x) \leq -\veps |x-x_0|^2, \ \forall \ x \in B_r(x_0)$.}
$$
Without loss of generality, by reducing $\veps$, we can assume that $v$ satisfies the stronger condition
$$
    D^2v(x_0) \in \widetilde{\Theta}^{\circ}(x_0) = -[\Theta(x_0)^c].
$$
Indeed, the perturbation $w: = v + \veps Q_{x_0}$ with
\begin{equation}\label{Qx0}
Q_{x_0}(x) = \frac{1}{2} |x-x_0|^2
\end{equation}
satisfies $D^2 w(x_0) = D^2v(x_0) + \veps I \in \widetilde{\Theta}^{\circ}(x_0)$ and
$$
\mbox{ $(u + w - a)(x_0) = 0$ \quad and \quad $(u + w -a)(x) \leq - \frac{\veps}{2} |x-x_0|^2, \ \forall \ x \in B_r(x_0)$.}
$$
\end{rem}

We now examine the relation between the notion of $\Phi$-admissible viscosity solutions (Definition \ref{Vs_def}) and $\Theta$-harmonic maps (Definition \ref{def:Weak_SH}) when the nonlinear PDE \eqref{FNE} admits an elliptic branch in accordance with Proposition \ref{pick_branch}.

\begin{prop}\label{SHCVS} Let $F \in C(\Omega \times \Ss(N), \R)$ and $\Phi: \Omega \to \mathcal{E}$ be such that \eqref{deg_ell} and \eqref{F_strict} hold and let $\Theta$ be the corresponding elliptic map defined by \eqref{def_branch1}. Then the following equivalences hold.
\begin{itemize}
\item[(a)] A function $u \in \USC(\Omega)$ is a $\Phi$-admissible viscosity subsolution of \eqref{FNE} in $\Omega$ if and only if $u \in \TSH(\Omega)$.
\item[(b)] A function $u \in \LSC(\Omega)$ is a $\Phi$-admissible viscosity supersolution of \eqref{FNE} in $\Omega$ if and only if $-u \in \TSHD(\Omega)$ provided that the branch condition \eqref{def_branch2} and the following non-degeneracy condition \footnote{This condition corresponds to part of Krylov's definition that the PDE \eqref{FNE} gives a {\em canonical form} of the branch \eqref{bdy_Theta}. See point (1) of Definition 3.1 in \cite{Kv95}.} are satisfied
    \begin{equation}\label{NDC}
    \mbox{ $ F(x,A) > 0$ for each $x \in \Omega$ and each $A \in \Theta(x)^{\circ}$.}
    \end{equation}
\end{itemize}
\end{prop}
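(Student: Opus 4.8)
The plan is to prove each of the two biconditionals by contradiction, in both directions building from the failure of one side a $C^{2}$ test function that witnesses the failure of the other. The dictionary translating between the two formulations is supplied by the identity $\Theta(x)=\{A\in\Phi(x):F(x,A)\geq 0\}$ of \eqref{def_branch1} (so in particular $\Theta(x)\subset\Phi(x)$), the pointwise duality formulas $\widetilde{\Theta}(x)=-[\Theta(x)^{\circ}]^{c}$ and $\widetilde{\Theta}^{\circ}(x)=-[\Theta(x)^{c}]$ recorded in Remark~\ref{use_TSH}, the reflexivity $\widetilde{\widetilde{\Theta}}=\Theta$ of \eqref{reflexivity}, and the non-subaffinity criterion of Lemma~\ref{lemma:nonSA}. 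I will use freely that $\widetilde{\Theta}$ is again elliptic by \eqref{preserve_ell} (so that Remark~\ref{use_TSH} applies verbatim with $\widetilde{\Theta}$ in place of $\Theta$, which by reflexivity yields $D^{2}v(x_{0})\in\Theta(x_{0})^{\circ}$ there), and that if $A\in E^{\circ}$ for an elliptic set $E$, then $A\pm\delta I\in E^{\circ}$ for $\delta>0$ small.

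For part (a), ``viscosity subsolution $\Rightarrow$ $\TSH$'': assuming $u\notin\TSH(\Omega)$, Remark~\ref{use_TSH} yields $x_{0}\in\Omega$, $v\in C^{2}(\Omega)$ with $D^{2}v(x_{0})\in\widetilde{\Theta}^{\circ}(x_{0})=-[\Theta(x_{0})^{c}]$, and a triple $(\veps,r,a)$ with $(u+v-a)(x_{0})=0$ and $(u+v-a)(x)\leq-\veps|x-x_{0}|^{2}$ on $B_{r}(x_{0})$; then $\varphi:=a-v\in C^{2}(\Omega)$ is a test function for which $u-\varphi$ has a local maximum at $x_{0}$, so \eqref{Vss} forces $-D^{2}v(x_{0})=D^{2}\varphi(x_{0})\in\Phi(x_{0})$ and $F(x_{0},-D^{2}v(x_{0}))\geq 0$, i.e.\ $-D^{2}v(x_{0})\in\Theta(x_{0})$, contradicting $-D^{2}v(x_{0})\in\Theta(x_{0})^{c}$. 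Conversely, if $u\in\TSH(\Omega)$ and $u-\varphi$ has a local maximum at $x_{0}$, one must show $D^{2}\varphi(x_{0})\in\Theta(x_{0})$; if this fails, closedness of $\Theta(x_{0})$ gives $D^{2}\varphi(x_{0})+\delta I\notin\Theta(x_{0})\supset\Theta(x_{0})^{\circ}$ for $\delta>0$ small, so $v:=-\varphi-\delta Q_{x_{0}}$ satisfies $D^{2}v(x_{0})=-(D^{2}\varphi(x_{0})+\delta I)\in\widetilde{\Theta}(x_{0})$, while $u+v=u-\varphi-\delta Q_{x_{0}}$ obeys $(u+v-a)(x_{0})=0$ and $(u+v-a)(x)\leq-\tfrac{\delta}{2}|x-x_{0}|^{2}$ near $x_{0}$ with $a$ the constant $(u-\varphi)(x_{0})$; by Lemma~\ref{lemma:nonSA} this gives $u+v\notin\SA(x_{0})$, contradicting $u\in\TSH(x_{0})$.

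Part (b) runs through the same scheme applied to the elliptic map $\widetilde{\Theta}$ (dual to $\Theta$), and it is here that the two extra hypotheses are consumed. For ``viscosity supersolution $\Rightarrow$ $\TSHD$'': assuming $-u\notin\TSHD(\Omega)$, Remark~\ref{use_TSH} applied to $\widetilde{\Theta}$ yields $x_{0}$, $v\in C^{2}(\Omega)$ with $D^{2}v(x_{0})\in\Theta(x_{0})^{\circ}$, and a triple making $(-u)+v\notin\SA(x_{0})$; then $\psi:=v-a$ is a test function for which $u-\psi$ has a local minimum at $x_{0}$, and since $D^{2}\psi(x_{0})\in\Theta(x_{0})^{\circ}\subset\Phi(x_{0})$ the alternative in \eqref{VSs} forces $F(x_{0},D^{2}\psi(x_{0}))\leq 0$, contradicting the non-degeneracy condition \eqref{NDC}. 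Conversely, assume $-u\in\TSHD(\Omega)$ and $u-\varphi$ has a local minimum at $x_{0}$; if \eqref{VSs} fails then $F(x_{0},D^{2}\varphi(x_{0}))>0$ and $D^{2}\varphi(x_{0})\in\Phi(x_{0})$, hence $D^{2}\varphi(x_{0})\in\Theta(x_{0})$, and the branch condition \eqref{def_branch2} together with $F(x_{0},D^{2}\varphi(x_{0}))>0$ forces $D^{2}\varphi(x_{0})\in\Theta(x_{0})^{\circ}$; then for $\delta>0$ small $v:=\varphi-\delta Q_{x_{0}}$ has $D^{2}v(x_{0})=D^{2}\varphi(x_{0})-\delta I\in\Theta(x_{0})^{\circ}\subset\Theta(x_{0})=\widetilde{\widetilde{\Theta}}(x_{0})$, while $-u+v$ fails to be subaffine at $x_{0}$ exactly as in part (a) (now using that $\varphi-u$ has a local maximum at $x_{0}$), contradicting $-u\in\TSHD(x_{0})$.

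I expect no conceptual obstacle: the argument is formal once the duality dictionary is set up. The points that need attention are the inheritance of the strengthened conclusion of Remark~\ref{use_TSH} by the dual map, and --- more importantly for the statement --- recognizing precisely where the extra hypotheses of (b) enter: condition \eqref{NDC} is exactly what forbids $F(x_{0},\cdot)\leq 0$ on $\Theta(x_{0})^{\circ}$, so it is what kills the ``only if'' half, while \eqref{def_branch2} is exactly what guarantees a test Hessian with $F(x_{0},\cdot)>0$ lies in the \emph{open} set $\Theta(x_{0})^{\circ}$, so that the quadratic perturbation used to destroy subaffinity does not exit $\Theta(x_{0})$ (resp.\ $\widetilde{\Theta}(x_{0})$) --- which is why no such hypotheses are needed for the subsolution equivalence in (a).
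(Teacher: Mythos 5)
Your proof is correct and uses the same duality/subaffinity framework as the paper. One minor variation worth noting: in the converse direction of part (b) the paper routes through $\partial\Phi(x_0)$ and the explicit decomposition \eqref{EBPhi2} of $\partial\Theta(x_0)$, whereas you deduce $D^2\varphi(x_0)\in\Theta(x_0)^{\circ}$ directly from the closedness of $\Theta(x_0)$ and the branch condition \eqref{def_branch2}, which streamlines the construction of the contradictory test function; your identification of where each extra hypothesis of (b) enters (\eqref{NDC} in the forward direction, \eqref{def_branch2} in the converse) matches the paper.
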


\begin{proof} The defining condition \eqref{def_branch1} of the elliptic map $\Theta$ is given pointwise by
\begin{equation}\label{calc_dual1}
    \Theta(x) = \{ A \in \Phi(x): \ F(x,A) \geq 0 \} = \Phi \cap \{ A \in \Ss(N): \ F(x,A) \geq 0 \}, \ \ x \in \Omega
\end{equation}
and hence the dual map $\widetilde{\Theta}$ is given by
$$
    \widetilde{\Theta}(x) =  \widetilde{\Phi}(x)  \cap \overline{ \{ A \in \Ss(N): \  F(x,-A) < 0 \}},
     \ \ x \in \Omega,
$$
which follows from a simple calculation using \eqref{calc_dual1}, the definition of duality \eqref{DM} and the elementary property
$$
    \widetilde{\Phi \cap \Psi} =  \widetilde{\Phi} \cup  \widetilde{\Psi}, \ \ \Phi, \Psi \in \Ss(N).
$$
 Moreover, using the property $[\widetilde{\Theta}(x)]^{\circ} = - [ \Theta(x) ]^{c}$ and \eqref{calc_dual1} one has
 \begin{equation}\label{calc_dual4}
    [\widetilde{\Theta}(x)]^{\circ} =  \widetilde{\Phi}^{\circ} \cup  \{ A \in \Ss(N): \  F(x,-A) < 0 \}.
 \end{equation}

For part (a), assume first that $u \in \USC(\Omega)$ is a $\Phi$-admissible viscosity subsolution of \eqref{FNE} in $\Omega$ but that $u \not\in \TSH(x_0)$ for some $x_0 \in \Omega$. Exploiting Remark \ref{use_TSH}, there exists $v \in C^2(\Omega)$ with $D^2v(x_0) \in [\widetilde{\Theta}(x_0)]^{\circ}$ and there exists a triple $(\veps, r, a)$ such that
\begin{equation}\label{notTSH1}
\mbox{ $(u + v - a)(x_0) = 0$ \quad and \quad $(u + v -a)(x) \leq -\veps |x-x_0|^2, \ \forall \ x \in B_r(x_0)$.}
\end{equation}
Pick $- \varphi = v - a \in C^2(\Omega)$ and \eqref{notTSH1} says that $u-\varphi$ has a local maximum in $x_0$, while $D^2\varphi(x_0) = - D^2v(x_0) \in -[\widetilde{\Theta}(x_0)]^{\circ}$ and \eqref{calc_dual4} yields
$$
    \mbox{ $ D^2 \varphi(x_0) \in -[\widetilde{\Phi}(x_0)]^{\circ} = [\Phi(x_0)]^c$ \quad or \quad $F(x_0, D^2\varphi(x_0)) < 0$,}
$$
which contradicts $u$ being a $\Phi$-admissible viscosity subsolution.

For the converse, assume that $u \in \TSH(\Omega)$ but that $u \in \USC(\Omega)$ is not a $\Phi$-admissible viscosity subsolution of \eqref{FNE} in some $x_0 \in \Omega$. Hence there exists $\varphi \in C^2(\Omega)$ and $r > 0$ such that
$$
    (u - \varphi)(x) \leq (u - \varphi)(x_0) = 0, \ \forall x \in B_r(x_0)
$$
and
\begin{equation}\label{notVsub2}
\mbox{ $F(x_0, D^2\varphi(x_0)) < 0$ \quad or \quad $ D^2 \varphi(x_0) \not\in \Phi(x_0)$.}
\end{equation}
For each $\veps > 0$, with $Q_{x_0}$ defined by \eqref{Qx0} one has
\begin{equation}\label{notVsub3}
\mbox{ $(u - \varphi - \veps Q_{x_0})(x_0) = 0$ \quad and \quad $(u - \varphi - \veps Q_{x_0})(x) \leq - \frac{\veps}{2}|x - x_0|^2$ in $B_r(x_0)$.}
\end{equation}
Setting $v_{\veps}:= - \varphi - \veps Q_{x_0} \in C^2(\Omega)$, \eqref{notVsub3} says that $u + v_{\veps} \not\in \SA(x_0)$ for each $\veps > 0$ and hence $D^2v_{\veps}(x_0) \not\in \widetilde{\Theta}(x_0)$ for each $\veps > 0$. Using the definition of $v_{\veps}$ and the relation $[\widetilde{\Theta}(x_0)]^c = - [\Theta(x_0)]^{\circ}$ one has
\begin{equation}\label{notVsub4}
 D^2\varphi(x_0) + \veps I = - D^2v_{\veps}(x_0) \in [\Theta(x_0)]^{\circ}, \ \forall  \veps > 0.
\end{equation}
Taking the limit as $\veps \to 0^+$ in \eqref{notVsub4} (and using the continuity of $F$,  \eqref{closure_interior} and \eqref{calc_dual1}) one has
$$
    \mbox{ $D^2\varphi(x_0) \in \Theta(x_0) \subset \Phi(x_0)$ \quad and \quad $ F(x_0, D^2\varphi(x_0)) \geq 0$,}
$$
which contradicts \eqref{notVsub2}.

For part (b), assume first that $u \in \LSC(\Omega)$ is a $\Phi$-admissible viscosity supersolution of \eqref{FNE} in $\Omega$ but that $-u \not\in \TSHD(x_0)$ for some $x_0 \in \Omega$. Exploiting Remark \ref{use_TSH}, there exists $v \in C^2(\Omega)$ with $D^2v(x_0) \in [\Theta(x_0)]^{\circ}$ and there exists a triple $(\veps, r, a)$ such that
\begin{equation}\label{notTSHD1}
\mbox{ $(-u + v - a)(x_0) = 0$ \quad and \quad $(-u + v -a)(x) \leq -\veps |x-x_0|^2, \ \forall \ x \in B_r(x_0)$.}
\end{equation}
Pick $\varphi = v - a \in C^2(\Omega)$ and \eqref{notTSHD1} says that $u-\varphi$ has a local minimum in $x_0$. Since $u$ is a $\Phi$-admissible viscosity supersolution, one has
\begin{equation}\label{notTSHD2}
\mbox{ $ D^2 \varphi(x_0) \not\in \Phi(x_0)$ \quad or \quad $F(x_0, D^2\varphi(x_0)) \leq 0$.}
\end{equation}
Since $D^2\varphi(x_0) =  D^2v(x_0) \in [\Theta(x_0)]^{\circ}$ one has
\begin{equation}\label{notTSHD3}
\mbox{ $ D^2 \varphi(x_0) \in [\Phi(x_0)]^{\circ} \subset \Phi(x_0)$ \quad and \quad $F(x_0, D^2\varphi(x_0)) \geq 0$.}
\end{equation}
Combining \eqref{notTSHD2} and \eqref{notTSHD3} one must have
$$
    \mbox{ $ D^2 \varphi(x_0) \in [\Theta(x_0)]^{\circ}$ \quad and \quad $F(x_0, D^2\varphi(x_0)) = 0$,}
$$
which cannot happen if the non-degeneracy condition \eqref{NDC} holds.

For the converse, assume that $-u \in \TSHD(\Omega)$ but that $u \in \LSC(\Omega)$ is not a $\Phi$-admissible viscosity supersolution of \eqref{FNE} in some $x_0 \in \Omega$. Hence there exists $\varphi \in C^2(\Omega)$ and $r > 0$ such that
$$
    (u - \varphi)(x) \geq (u - \varphi)(x_0) = 0, \ \forall x \in B_r(x_0)
$$
and
\begin{equation}\label{notVsuper2}
\mbox{ $F(x_0, D^2\varphi(x_0)) > 0$ \quad and \quad $ D^2 \varphi(x_0) \in \Phi(x_0)$.}
\end{equation}
For each $\veps > 0$, with $Q_{x_0}$ defined by \eqref{Qx0} one has
\begin{equation}\label{notVsuper3}
\mbox{ $(-u + \varphi - \veps Q_{x_0})(x_0) = 0$ \ and \  $(-u + \varphi - \veps Q_{x_0})(x) \leq - \frac{\veps}{2}|x - x_0|^2$ in $B_r(x_0)$.}
\end{equation}
Setting $v_{\veps}:= \varphi - \veps Q_{x_0} \in C^2(\Omega)$, \eqref{notVsuper3} says that $-u + v_{\veps} \not\in \SA(x_0)$ for each $\veps > 0$ and hence $D^2v_{\veps}(x_0) \not\in \Theta(x_0)$ for each $\veps > 0$ since $-u \in \TSHD(\Omega)$. Using the definition of $v_{\veps}$ and the definition of $\Theta(x_0)$ one has
\begin{equation}\label{notVsuper4}
\mbox{ $D^2\varphi(x_0) - \veps I \not\in \Phi(x_0)$ \quad or \quad $ F(x_0, D^2\varphi(x_0) - \veps I) < 0, \ \forall  \veps > 0$.}
\end{equation}
From \eqref{notVsuper2} and the continuity of $F$ it follows that $F(x_0, D^2\varphi(x_0)- \veps I) > 0$ for each $\veps > 0$ small enough. Hence the second possibility in \eqref{notVsuper4} cannot occur for each $\veps > 0$ small and one has $D^2\varphi(x_0) - \veps I \not\in \Phi(x_0)$ for each $\veps > 0$ small. It follows that $D^2\varphi(x_0) \not\in [\Theta(x_0)]^{\circ}$ and hence \eqref{notVsuper2} yields
\begin{equation}\label{notVsuper5}
\mbox{ $F(x_0, D^2\varphi(x_0)) > 0$ \quad and \quad $ D^2 \varphi(x_0) \in \partial \Phi(x_0)$.}
\end{equation}
Using the relation \eqref{EBPhi2} and the branch condition \eqref{def_branch2} it follows that
$$
D^2 \varphi(x_0) \in \partial \Theta(x_0) \subset \{ A \in \Ss(N): \ F(x_0, A) \leq 0 \},
$$
which contradicts \eqref{notVsuper5}.

\end{proof}

We conclude this section with the following {\em coherence property} between classical and weakly $\Theta$-harmonic maps, which will be used often in the sequel.
\begin{prop}\label{coherence} Let $u \in \USC(\Omega)$ be twice differentiable in $x_0 \in \Omega$. Then
\begin{equation}\label{coherence1}
 u \in \TSH(x_0) \Leftrightarrow \frac{1}{2} \left[ D^2u(x_0) + D^2u(x_0)^T \right] \in \Theta(x_0).
\end{equation}
In particular, if $u \in C^2(\Omega)$ then
\begin{equation}\label{coherence2}
u \in \TSH(\Omega) \Leftrightarrow  D^2u(x_0) \in \Theta(x_0), \forall \ x_0 \in \Omega.
\end{equation}
\end{prop}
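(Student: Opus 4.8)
The plan is to establish the pointwise equivalence \eqref{coherence1} and then to read off \eqref{coherence2} at once by applying it at every $x_0 \in \Omega$ (when $u \in C^2(\Omega)$ the Hessian $D^2u(x_0)$ is already symmetric, so the symmetrization in \eqref{coherence1} is superfluous). Throughout set
$$
A := \tfrac{1}{2}\left[ D^2u(x_0) + D^2u(x_0)^T \right] \in \Ss(N),
$$
which is precisely the symmetric matrix appearing in the second--order Taylor expansion $u(x) = u(x_0) + \langle Du(x_0), x-x_0 \rangle + \tfrac12 \langle A(x-x_0), x-x_0 \rangle + o(|x-x_0|^2)$, valid because $u$ is twice differentiable at $x_0$. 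The two ingredients that do the real work are the identity $[\widetilde{\Theta}(x_0)]^{\circ} = -[\Theta(x_0)]^{c}$ (recalled in Remark \ref{use_TSH}, and ultimately a consequence of \eqref{closure_interior}) and the duality relation \eqref{sum_duals}, namely that $A' \in \Theta(x_0)$ and $B' \in \widetilde{\Theta}(x_0)$ imply $A' + B' \in \widetilde{\mathcal{P}} = \{ M \in \Ss(N) : \lambda_N(M) \geq 0 \}$.

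For the implication $(\Leftarrow)$, assume $A \in \Theta(x_0)$ and argue by contradiction. If $u \notin \TSH(x_0)$, then by Definition \ref{def:Weak_SH} and Lemma \ref{lemma:nonSA} there are $v \in C^2(\Omega)$ with $D^2v(x_0) \in \widetilde{\Theta}(x_0)$ and a triple $(\veps, r, a)$ with $(u+v-a)(x_0) = 0$ and $(u+v-a)(x) \leq -\veps|x-x_0|^2$ on $B_r(x_0)$. Evaluating this inequality at $x = x_0 + t\xi$ and at $x = x_0 - t\xi$ for a unit vector $\xi$ and $0 < t < r$, expanding $u$ by its Taylor expansion and $v$ by its $C^2$ expansion to second order and adding the two, the linear terms cancel; after dividing by $t^2$ and letting $t \to 0^+$ one obtains $\langle (A + D^2v(x_0))\xi, \xi \rangle \leq -2\veps$ for every unit $\xi$, so $\lambda_N(A + D^2v(x_0)) < 0$. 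This contradicts \eqref{sum_duals}, which gives $A + D^2v(x_0) \in \widetilde{\mathcal{P}}$, i.e. $\lambda_N(A + D^2v(x_0)) \geq 0$.

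For the implication $(\Rightarrow)$, assume $u \in \TSH(x_0)$ and suppose, for contradiction, that $A \notin \Theta(x_0)$; then $A \in [\Theta(x_0)]^{c}$, so $-A \in -[\Theta(x_0)]^{c} = [\widetilde{\Theta}(x_0)]^{\circ}$, and hence $-A - \veps I \in \widetilde{\Theta}(x_0)$ for all $\veps > 0$ sufficiently small. Consider the quadratic polynomial
$$
v(x) := -u(x_0) - \langle Du(x_0), x - x_0 \rangle - \tfrac12 \langle (A + \veps I)(x - x_0), x - x_0 \rangle,
$$
which lies in $C^2(\Omega)$ and satisfies $D^2 v(x_0) = -A - \veps I \in \widetilde{\Theta}(x_0)$. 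By the Taylor expansion of $u$ one has $(u+v)(x_0) = 0$ and $(u+v)(x) = o(|x-x_0|^2) - \tfrac{\veps}{2}|x-x_0|^2 \leq -\tfrac{\veps}{4}|x-x_0|^2$ on a small ball $B_\rho(x_0)$, so with $a \equiv 0$ Lemma \ref{lemma:nonSA} shows $u + v \notin \SA(x_0)$, contradicting $u \in \TSH(x_0)$. Therefore $A \in \Theta(x_0)$, which completes the proof of \eqref{coherence1}, and \eqref{coherence2} follows by applying it at every point.

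I do not expect a genuine obstacle: the only point requiring mild care is the ray--by--ray Taylor computation in $(\Leftarrow)$, where one must extract the sign of the quadratic form by symmetrizing in $\pm \xi$ so as to kill the linear contribution; this is the same type of argument used repeatedly elsewhere in the paper (e.g. in the proof of Proposition \ref{SHCVS}) and presents no structural difficulty. Everything else is a bookkeeping application of the duality identities $[\widetilde{\Theta}(x_0)]^{\circ} = -[\Theta(x_0)]^{c}$ and \eqref{sum_duals}.
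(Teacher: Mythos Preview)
Your proof is correct and follows essentially the same approach as the paper: both directions rely on the same Taylor-expansion construction of a quadratic test function and the duality facts $[\widetilde{\Theta}(x_0)]^{\circ} = -[\Theta(x_0)]^{c}$ and \eqref{sum_duals}. The only cosmetic difference is in $(\Leftarrow)$: the paper perturbs $v$ to $\hat v = v + \tfrac{\veps}{2}|x-x_0|^2$ so that $w = u + \hat v - a$ has a local maximum at $x_0$ and then invokes the second-order necessary condition $D^2w(x_0) \le 0$, whereas you extract the same inequality via symmetrized second differences in $\pm\xi$; both routes yield $\lambda_N(A + D^2v(x_0)) < 0$ and the same contradiction.
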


\begin{proof} Clearly \eqref{coherence2} follows from \eqref{coherence1}. For $u$ twice differentiable in $x_0$, considering its Taylor expansion one has the following fact: for every $\veps > 0$ there exists $r = r(\veps) > 0$ such that for each $x \in B_r(x_0)$ one has
$$
u(x) - u(x_0) - \langle Du(x_0), x - x_0 \rangle - \frac{1}{2} \langle D^2u(x_0)(x-x_0), x-x_0 \rangle - \frac{\veps}{2} |x- x_0|^2 \leq - \frac{\veps}{4} |x- x_0|^2.
$$
Setting $v_{\veps} = - \frac{1}{2} \langle D^2u(x_0)(x-x_0), x-x_0 \rangle - \frac{\veps}{2} |x- x_0|^2$ and $a = u(x_0) + \langle Du(x_0), x - x_0 \rangle$ one reads this as
$$
\mbox{$  (u + v_{\veps} - a)(x_0) = 0$ \quad and \quad $(u + v_{\veps} - a)(x) \leq - \frac{\veps}{4} |x- x_0|^2, \ \forall x \in B_r(x_0),$}
$$
which means that $u + v_{\veps} \not\in \SA(x_0)$ for each $\veps > 0$. Assuming $ u \in \TSH(x_0)$ one concludes that $D^2v_{\veps}(x_0) \not\in \widetilde{\Theta}(x_0)$ for each $\veps > 0$; that is, for each $\veps > 0$ one has
$$
- \frac{1}{2} \left[ D^2u(x_0) + D^2u(x_0)^T \right] - \veps I \in \widetilde{\Theta}(x_0)^c = - \left[ \Theta(x_0)^{\circ} \right].
$$
Letting $\veps \to 0^+$ and using \eqref{closure_interior}, one obtains $\frac{1}{2} \left[ D^2u(x_0) + D^2u(x_0)^T \right] \in \Theta(x_0)$.

For the converse, one argues by contradiction. Assume that $u$ is twice differentiable with $\frac{1}{2} \left[ D^2u(x_0) + D^2u(x_0)^T \right] \in \Theta(x_0)$ but $u \not\in \TSH(x_0)$. Then there exists $v \in C^2(\Omega)$ with $D^2v(x_0) \in \widetilde{\Theta}(x_0)$ and $(\veps, r, a)$ such that
$$
\mbox{$  (u + v- a)(x_0) = 0$ \quad and \quad $(u + v - a)(x) \leq - \veps|x- x_0|^2, \ \forall x \in B_r(x_0).$}
$$
Taking the perturbation $\hat{v}(x) = v(x) + \frac{\veps}{2} |x- x_0|^2$ one has
$$
\mbox{$  (u + \hat{v} - a)(x_0) = 0$ \quad and \quad $(u + \hat{v} - a)(x) \leq - \frac{\veps}{2} |x- x_0|^2, \ \forall x \in B_r(x_0).$}
$$
Hence $w = u + \hat{v} - a$ is twice differentiable in $x_0$ and has a local maximum in $x = x_0$. Therefore one has $D^2 w(x_0) = D^2 u(x_0) + D^2v(x_0) + \veps I \leq 0$ and hence
$$
A + B := \frac{1}{2} \left[ D^2u(x_0) + D^2u(x_0)^T \right] + D^2v(x_0) \leq - \veps I,
$$
which yields $\lambda_N(A+B) < 0$. Hence $A + B \not\in \widetilde{\mathcal{P}}$ by \eqref{nonnegative}, but this contradicts \eqref{sum_duals} since $A \in \Theta(x_0)$ and $B \in \widetilde{\Theta}(x_0)$.
\end{proof}

In light of Proposition \ref{SHCVS} and Proposition \ref{coherence}, if in addition to \eqref{deg_ell} and \eqref{F_strict} one assumes the non degeneracy condition \eqref{NDC} with respect to $\Theta$ defined by \eqref{def_branch1} then $u \in C^2(\Omega)$ satisfying \eqref{Phi_AVS} will be a $\Phi$-admissible viscosity solution of \eqref{FNE} in the sense of Definition \ref{Vs_def}. 

\section{Semicontinuity of elliptic maps and elementary properties}\label{semicontinuity}

In preparation for the implementation of a Perron method for elliptic maps and elliptic branches of \eqref{FNE}, we will present a few elementary properties which mirror well known ingredients in the classical viscosity theory. At a certain point, mild regularity properties of the set valued map $\Theta$ will play a role.

\begin{lem}\label{max_affine}
Let $\Theta$ be an elliptic map on $\Omega$. The following properties hold.
\begin{itemize}
\item[(A)] If $u \in \TSH(\Omega)$ and $a$ is affine then $u + a \in \TSH(\Omega)$.
\item[(M)] If $u,v \in \TSH(\Omega)$ then $\max\{ u, v \} \in \TSH(\Omega)$.
\end{itemize}
\end{lem}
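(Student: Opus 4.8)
The plan is to verify both properties by unwinding the pointwise definition of $\TSH(x_0)$ in terms of subaffinity of sums with dual test functions (Definition \ref{def:Weak_SH}), exploiting only elementary stability properties of the class $\SA(x_0)$. For property (A), fix $x_0 \in \Omega$ and let $v \in C^2(\Omega)$ with $D^2v(x_0) \in \widetilde{\Theta}(x_0)$; I want $(u+a) + v \in \SA(x_0)$. First I would observe that $a$ is affine, hence $D^2(v+a) = D^2v$ on all of $\Omega$, so in particular $D^2(v+a)(x_0) = D^2v(x_0) \in \widetilde{\Theta}(x_0)$, which means $v + a$ is itself an admissible dual test function at $x_0$. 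Since $u \in \TSH(x_0)$, we get $u + (v+a) \in \SA(x_0)$; but $u + (v+a) = (u+a) + v$, which is exactly what we need. (One could equivalently note directly that $\SA(x_0)$ is invariant under adding affine functions, since the failure condition \eqref{e:nonSA} is stated relative to an arbitrary affine $a$.) Running this for every $x_0 \in \Omega$ gives $u + a \in \TSH(\Omega)$.

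For property (M), again fix $x_0$ and a test function $v \in C^2(\Omega)$ with $D^2v(x_0) \in \widetilde{\Theta}(x_0)$; the goal is $\max\{u,v\} + w \in \SA(x_0)$... more precisely $\max\{u,v\}+ w$ — let me restate: the goal is that $w := \max\{u,v\}$ satisfies $w + \psi \in \SA(x_0)$ for each such dual test function $\psi$. The key observation is the distributive identity $\max\{u,v\} + \psi = \max\{u+\psi, v+\psi\}$ valid for the finite-valued parts (and handled in the obvious way where $-\infty$ occurs). Since $u, v \in \TSH(x_0)$ and $\psi$ is an admissible dual test function, both $u + \psi$ and $v + \psi$ lie in $\SA(x_0)$. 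Thus it suffices to know that $\SA(x_0)$ is closed under taking the maximum of two of its elements. This in turn follows from Lemma \ref{lemma:nonSA}: if $\max\{u+\psi, v+\psi\}$ failed to be subaffine at $x_0$, there would be a triple $(\veps,r,b)$ with $(\max\{u+\psi,v+\psi\} - b)(x_0) = 0$ and $\max\{u+\psi,v+\psi\} - b \le -\veps|x-x_0|^2$ on $B_r(x_0)$; since the value at $x_0$ is $0$, at least one of $u+\psi$, $v+\psi$ equals $b$ at $x_0$, and that same one is $\le \max\{u+\psi,v+\psi\} \le b - \veps|x-x_0|^2$ on $B_r(x_0)$, so it fails to be subaffine at $x_0$ — a contradiction. (Alternatively one can cite directly that $\SA(\Omega)$ is stable under finite maxima, which is standard and follows from Definition \ref{SA_defn} since $\max\{f,g\} \le a$ on $\partial K$ forces both $f \le a$ and $g \le a$ on $\partial K$.) Since $\max\{u,v\} \in \USC(\Omega)$ whenever $u, v \in \USC(\Omega)$, this completes the argument, running it over all $x_0 \in \Omega$.

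Neither step presents a genuine obstacle; the only points requiring care are bookkeeping ones — checking that adding an affine function does not disturb the Hessian-at-$x_0$ membership condition for dual test functions, and handling the possible value $-\infty$ in the identity $\max\{u,v\}+\psi = \max\{u+\psi,v+\psi\}$ and in the failure triple (where one uses that $(\max\{u,v\}+\psi - b)(x_0)=0$ is a finite condition, so at least one summand is finite and attains $b$ at $x_0$). I would also remark that both stability properties ($\SA$ under affine shifts, $\SA$ under finite maxima) are recorded implicitly in \cite{HL09}, so the proof can be compressed to a couple of lines by simply invoking them together with the definition of $\TSH(x_0)$ and the fact that $D^2v(x_0)$ is unchanged by adding affine functions to $v$.
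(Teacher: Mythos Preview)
Your proposal is correct and follows essentially the same approach as the paper. The only cosmetic difference is in (A): you absorb the affine $a$ into the dual test function (using $D^2(v+a)=D^2v$), whereas the paper instead shows directly that $\SA(x_0)$ is stable under affine shifts; you mention this alternative yourself, and for (M) your contradiction argument via Lemma \ref{lemma:nonSA} is identical to the paper's.
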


\begin{proof}
The {\em affine property} (A) reduces to the claim that for each $x_0 \in \Omega, w \in \USC(\Omega)$ and $a$ affine one has
\begin{equation}\label{A1}
w \in \SA(x_0) \Rightarrow w + a \in \SA(x_0).
\end{equation}
Indeed, $u + a \in \TSH(\Omega)$ requires that for each $ v \in C^2(\Omega)$ with $D^2 v(x_0) \in \widetilde{\Theta}(x_0)$ one has
$$
u + v + a \in \SA(x_0),
$$
but $u + v \in \SA(x_0)$ since $u \in \TSH(x_0)$. The claim \eqref{A1} follows from a simple argument by contradiction. If $w + a \not\in \SA(x_0)$, then by Lemma \ref{lemma:nonSA} there exists a triple $(\veps, r, a^*)$ such that
$$
\mbox{$(w + a - a^*)(x_0) = 0$ \quad and \quad $(w + a - a^*)(x) \leq - \veps |x- x_0|^2, \ \forall \ x \in B_r(x_0)$,}
$$
but $a^* - a$ is affine and one concludes that $w \not\in \SA(x_0)$.

For the {\em maximum property} (M), if the conclusion were false then there exists $x_0 \in \Omega$ such that $\max\{ u, v \} + \varphi \not\in \SA(x_0)$ for some $\varphi \in C^2(\Omega)$ with $D^2 \varphi(x_0) \in \widetilde{\Theta}(x_0)$. Again by Lemma \ref{lemma:nonSA} one has a triple $(\veps, r, a)$ such that
\begin{equation}\label{M1}
 (\max\{ u, v \} + \varphi - a)(x) \leq - \veps |x- x_0|^2, \ \forall \ x \in B_r(x_0).
\end{equation}
and
\begin{equation}\label{M2}
(\max\{ u, v \} + \varphi - a)(x_0) = 0.
\end{equation}
Without loss of generality, we can write \eqref{M2} as $u(x_0) + \varphi(x_0) - a(x_0) = 0$ and \eqref{M1} as
$$
 (u + \varphi - a)(x) \leq (\max\{ u, v \} + \varphi - a)(x) \leq - \veps |x- x_0|^2, \ \forall \ x \in B_r(x_0),
$$
and hence $u + \varphi \not\in \SA(x_0)$ which contradicts $u \in \TSH(x_0)$.
\end{proof}

In order to perform various limit operations in $\TSH(\Omega)$, we will need a semicontinuity property of the set valued map $\Theta$, which mirrors what is needed for the corresponding operations for viscosity subsolutions of \eqref{FNE}. We recall the natural notion of semicontinuity for set valued maps, where given $\Phi \subset \Ss(N)$ and $\veps > 0$ we will denote by
$$
    N_{\veps} \Phi = \{ B \in \Ss(N): ||B - A|| < \veps \quad \text{for some} \ A \in \Phi \} = \bigcup_{A \in \Phi} B_{\veps}(A),
$$
the {\em $\veps$-enlargement} of the subset $\Phi$ where $\displaystyle{||A||:= \max_{1 \leq i \leq N} |\lambda_i(A)|}$ gives a norm on $\Ss(N)$.

\begin{defn}\label{defn:uscTheta} A set valued map $\Theta: \Omega \multimap \Ss(N)$ is said to be
\begin{itemize}
\item[(a)] {\em upper semicontinuous in $x_0 \in \Omega$} if
\begin{equation}\label{uscTheta}
\mbox{$\forall \veps > 0 \ \exists \delta = \delta(\veps, x_0)$ such that $\Theta(B_{\delta}(x_0)) \subset N_{\veps}(\Theta(x_0))$;}
\end{equation}
\item[(b)] {\em upper semicontinuous on $\Omega$} if this holds for every $x_0 \in \Omega$.
\end{itemize}
If, in addition, $\Theta$ takes values in the elliptic subsets $\mathcal{E}$, then $\Theta$ will be called {\em upper semicontinuous elliptic map}. The collection of all such maps will be denoted by $\USC(\Omega; \mathcal{E})$.
\end{defn}

Since elliptic sets are closed, if $\Theta(x_0) \in \mathcal{E}$ then \eqref{uscTheta} implies the following statement 
\begin{equation}\label{uscTheta_n}
\left\{ \begin{array}{l} x_n \to x_0 \ \text{in} \ \Omega \\ A_n \in \Theta(x_n) \\ A_n \to A_0 \ \text{in} \ \Ss(N) \end{array} \right. \Rightarrow A_0 \in \Theta(x_0),
\end{equation}
which will be used in the following lemma on limit operations.

\begin{lem}\label{limit_sup} Let $\Theta \in \USC(\Omega; \mathcal{E})$. Then the following properties hold.
\begin{itemize}
\item[(L)] If $\{u_n\}_{n \in \N} \subset \TSH(\Omega)$ is a decreasing sequence, then the limit $\displaystyle{u:= \lim_{n \to +\infty} u_n}$ belongs to $\TSH(\Omega)$.
\item[(S)] If $\mathcal{F} \subset \TSH(\Omega)$ is a non empty family of functions which are locally uniformly bounded from above, then the Perron function $\displaystyle{u:= \sup_{f \in \mathcal{F}} f}$ has $u^* \in \TSH(\Omega)$, where
$$
    u^*(x):= \limsup_{r \to 0^+} \{ u(y): y \in \Omega \cap \overline{B}_r(x_0)\}, x \in \Omega
$$
is the upper semicontinuous regularization of $u$.
\end{itemize}
\end{lem}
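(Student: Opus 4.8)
The two assertions are the $\Theta$-subharmonic analogues of the classical stability of viscosity subsolutions under decreasing limits and under upper envelopes, and the plan is to prove both by contradiction along a common template. A failure of $\Theta$-subharmonicity of the limit function at a point is, by Lemma~\ref{lemma:nonSA} and Remark~\ref{use_TSH}, witnessed by a test function $v \in C^2(\Omega)$ with $D^2 v(x_0) \in \widetilde{\Theta}^{\circ}(x_0)$ and a triple $(\veps, r, a)$ realizing a strict negative-quadratic touching of $u + v - a$ (resp.\ $u^* + v - a$) at $x_0$; one then \emph{relocates} the contact point to a genuine interior maximum $x_n$ of $u_n + (v-a)$ (resp.\ $\xi_k$ of $f_k + (v-a)$) along the sequence, shows that the contact value tends to $0$, perturbs by a small multiple of $Q_{x_n}$ (see \eqref{Qx0}) to recover a strict touching, and uses \eqref{uscTheta_n} to check that the perturbed test function still has Hessian in $\widetilde{\Theta}(x_n)$. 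After shrinking $r$ we assume the bound $(u + v - a)(x) \le -\veps|x - x_0|^2$ holds on $\overline{B_r(x_0)}$, and we note $u = \lim_n u_n = \inf_n u_n \in \USC(\Omega)$ so that the statement of (L) is well-posed.

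For (L): suppose $u \notin \TSH(\Omega)$ and fix $x_0, v, (\veps, r, a)$ as above; set $w := v - a$. Since each $u_n \in \USC(\Omega)$ is bounded above on the compact $\overline{B_r(x_0)}$, let $x_n$ maximize $u_n + w$ there. From $u_n \ge u$ we get $(u_n + w)(x_n) \ge (u_n + w)(x_0) \ge 0$, so $\liminf_n (u_n + w)(x_n) \ge 0$; on the other hand, if $x_{n_j} \to \bar x$ then for each fixed $m$ one has $u_{n_j} \le u_m$ for $j$ large, whence $\limsup_j (u_{n_j} + w)(x_{n_j}) \le \limsup_j (u_m + w)(x_{n_j}) \le (u_m + w)(\bar x)$ by upper semicontinuity of $u_m$, and letting $m \to \infty$ gives $\limsup_j (u_{n_j} + w)(x_{n_j}) \le (u + w)(\bar x) \le -\veps|\bar x - x_0|^2 \le 0$. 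Hence $x_n \to x_0$ and $c_n := (u_n + w)(x_n) \to 0$. Now fix $\eta > 0$ so small that $\eta I - D^2 v(x_0)$ lies in the open set $\Theta(x_0)^c$ (possible since $D^2 v(x_0) \in \widetilde{\Theta}^{\circ}(x_0)$, i.e.\ $-D^2 v(x_0) \notin \Theta(x_0)$). For $n$ large $x_n \in B_r(x_0)$, and putting $v_n := w - c_n - \eta Q_{x_n}$ we get $(u_n + v_n)(x_n) = 0$ and $(u_n + v_n)(x) \le -\tfrac{\eta}{2}|x - x_n|^2$ near $x_n$, so $u_n + v_n \notin \SA(x_n)$ by Lemma~\ref{lemma:nonSA}. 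Finally $D^2 v_n(x_n) = D^2 v(x_n) - \eta I \in \widetilde{\Theta}(x_n)$ for $n$ large: otherwise $\eta I - D^2 v(x_{n_j}) \in \Theta(x_{n_j})^{\circ} \subset \Theta(x_{n_j})$ along a subsequence, and \eqref{uscTheta_n}, applied with $A_j = \eta I - D^2 v(x_{n_j}) \to \eta I - D^2 v(x_0)$, would force $\eta I - D^2 v(x_0) \in \Theta(x_0)$, contradicting the choice of $\eta$. Thus $u_n \notin \TSH(x_n)$, contradicting $u_n \in \TSH(\Omega)$.

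For (S): given $u^* \notin \TSH(\Omega)$, fix $x_0, v, (\veps, r, a)$ as above with $D^2 v(x_0) \in \widetilde{\Theta}^{\circ}(x_0)$, set $w := v - a$, and choose $y_k \to x_0$ with $u(y_k) \to u^*(x_0)$ and then $f_k \in \mathcal{F}$ with $f_k(y_k) \ge u(y_k) - 1/k$, so that $f_k(y_k) \to u^*(x_0)$. Let $\xi_k$ maximize $f_k + w$ over $\overline{B_r(x_0)}$; then for $k$ large $(f_k + w)(\xi_k) \ge (f_k + w)(y_k) \to (u^* + w)(x_0) = 0$, while $f_k \le u \le u^*$ together with upper semicontinuity of $u^*$ gives $\limsup_k (f_k + w)(\xi_k) \le (u^* + w)(\bar\xi) \le -\veps|\bar\xi - x_0|^2 \le 0$ for any limit point $\bar\xi$ of $(\xi_k)$; hence $\xi_k \to x_0$ and $(f_k + w)(\xi_k) \to 0$. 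From here the argument is the one of (L) with $u_n, x_n, c_n$ replaced by $f_k, \xi_k, (f_k+w)(\xi_k)$: perturbing $w$ by the constant and $\eta Q_{\xi_k}$ produces $v_k \in C^2(\Omega)$ with $f_k + v_k \notin \SA(\xi_k)$ and $D^2 v_k(\xi_k) \in \widetilde{\Theta}(\xi_k)$ for $k$ large (again via \eqref{uscTheta_n}), contradicting $f_k \in \TSH(\Omega)$.

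I expect two points to require the real care. First, the sequence sits on the ``wrong side'' of the limit ($u_n \ge u$ for (L), $f_k \le u \le u^*$ for (S)), so non-subaffinity of $u + v - a$ does \emph{not} transfer directly to the members of the sequence; the remedy is the relocation of the contact point together with the proof that the contact values tend to $0$, and this is exactly where the monotonicity of $(u_n)$ (resp.\ the maximizing-sequence/envelope structure) and the upper semicontinuity of the individual $u_n$ (resp.\ $f_k$ and $u^*$) enter. Second, the step $D^2 v(x_n) - \eta I \in \widetilde{\Theta}(x_n)$ is the only place the hypothesis $\Theta \in \USC(\Omega; \mathcal{E})$ is used; since \eqref{uscTheta_n} is phrased through $\Theta$ rather than $\widetilde{\Theta}$, one must pass to the complement $\Theta(\cdot)^c$, and it is precisely the strengthening $D^2 v(x_0) \in \widetilde{\Theta}^{\circ}(x_0)$ from Remark~\ref{use_TSH} — which puts $-D^2 v(x_0)$ in the \emph{open} set $\Theta(x_0)^c$, leaving room for the $-\eta I$ perturbation — that makes this complement argument close.
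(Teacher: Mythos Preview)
Your argument is correct and follows the same overall template as the paper's proof: argue by contradiction, relocate the touching point along the sequence, show the relocated points converge to $x_0$, and close via the upper semicontinuity \eqref{uscTheta_n}. The differences are purely tactical. The paper incorporates the quadratic $\veps Q_{x_0}$ into the comparison function $\varphi$ from the outset, so that $u_n - \varphi$ (resp.\ $f_k - \varphi$) already has a strict interior maximum at $x_n$; this lets them read off directly $u_n + v \notin \SA(x_n)$ (after rewriting $Q_{x_0} - Q_{x_n}$ as an affine correction) and hence $-D^2 v(x_n) \in \Theta(x_n)^{\circ}$, so that \eqref{uscTheta_n} applies immediately to $-D^2 v(x_n)$. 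You instead work with the unperturbed $w = v - a$, which forces you to subtract the contact value $c_n$ and add the extra $-\eta Q_{x_n}$ at the end; the price is that your test Hessian becomes $D^2 v(x_n) - \eta I$, and you must invoke \eqref{uscTheta_n} indirectly through the complement, which is exactly why the $\eta$--buffer (coming from $D^2 v(x_0) \in \widetilde{\Theta}^{\circ}(x_0)$) is needed. For the convergence $x_n \to x_0$ in (L), the paper quotes the Cantor--type fact $\sup_K u = \lim_n \sup_K u_n$ on annuli, whereas your subsequence--monotonicity argument achieves the same conclusion by hand. Both routes are standard viscosity manoeuvres; the paper's packaging is slightly cleaner because the single quadratic $\veps Q_{x_0}$ does double duty (strict maximum and SA violation), while yours is closer to the textbook ``perturb after relocating'' pattern.
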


\begin{proof}
The proof of the {\em decreasing limit property} (L) will use the following consequence of Cantor's intersection theorem applied to the decreasing sequence $\{u_n\}_{n \in \N} \subset \TSH(\Omega)$: {\em for each compact subset $K$ of $\Omega$ the upper semicontinuous function $\displaystyle{u:= \lim_{n \to +\infty} u_n}$ satisfies}
\begin{equation}\label{L1}
\sup_K u = \lim_{n \to +\infty} \left( \sup_K u_n \right).
\end{equation}
For a proof of this fact, see Appendix B of \cite{HL11}. To prove (L), one argues by contradiction assuming that $u \not\in \TSH(x_0)$ for some $x_0 \in \Omega$. Again using Remark \ref{use_TSH}, there exists $v \in C^2(\Omega)$ such that $D^2 v(x_0) \in [\widetilde{\Theta}(x_0)]^{\circ}$ and there exists a triple $(\veps, R, a)$ such that
\begin{equation}\label{L2}
\mbox{ $(u + v - a)(x_0) = 0$ \quad and \quad $(u + v - a)(x) \leq - \veps |x - x_0|^2, \forall \ x \in B_R(x_0)$.}
\end{equation}
Using the relation $ - \widetilde{\Theta}(x_0)^{\circ} = [\Theta(x_0)]^c$ one has
\begin{equation}\label{L3}
- D^2 v(x_0) \not\in \Theta(x_0).
\end{equation}
The idea now is to construct a sequence $x_n \to x_0$ for which
\begin{equation}\label{L3.5}
- D^2 v(x_n) \in \Theta(x_n).
\end{equation}
If this can be done, then since $v$ is $C^2$ and $\Theta$ is upper semicontinuous (see \eqref{uscTheta_n}), one has $- D^2 v(x_0) \in \Theta(x_0)$, which contradicts \eqref{L3}.

We proceed to construct the desired sequence. With $Q_{x_0}$ defined by \eqref{Qx0}, set $\varphi \in C^2(\Omega)$ by $- \varphi := v -a  + \veps Q_{x_0}$ . Using \eqref{L2}, for each $x \in B_R(x_0)$ one has
\begin{equation}\label{L4}
(u - \varphi)(x) \leq - \frac{ \veps}{2} |x - x_0|^2 \leq 0 = (u - \varphi)(x_0)
\end{equation}
and hence $u - \varphi$ has a strict maximum value of zero on each compact ball $\overline{B}_r(x_0) \subset B_R(x_0)$ with $r < R$. Since $u_n - \varphi$ is upper semicontinuous on the compact set $\overline{B}_r(x_0)$, there exists $x_n \in \overline{B}_r(x_0)$ such that
$$
(u_n - \varphi)(x_n) = \max_{\overline{B}_r(x_0)} (u_n - \varphi).
$$
For each $\rho \in (0,r)$ consider the compact set $K:= \overline{B}_r(x_0) \setminus {B}_{\rho}(x_0)$ and apply \eqref{L1} to the decreasing sequence $\{u_n - \varphi \}_{n \in \N} \subset \USC(\Omega)$ to find
$$
\lim_{n \to +\infty} \left( \sup_K [u_n - \varphi] \right) = \sup_K (u - \varphi) < 0,
$$
where we have also used \eqref{L4}. Hence one has
\begin{equation}\label{L5}
\sup_K (u_n - \varphi) < 0 \quad \text{for each $n$ large}.
\end{equation}
However, since $\{u_n\}$ is decreasing \eqref{L4} also yields
$$
    \max_{\overline{B}_r(x_0)}(u_n - \varphi) \geq \max_{\overline{B}_r(x_0)}(u - \varphi) = 0 = (u - \varphi)(x_0)
$$
and hence  \eqref{L5} implies that $x_n \not\in K$ for each $n$ large. Thus $x_n \in B_{\rho}(x_0)$ with $\rho < r$ arbitrary and hence $x_n \to x_0$ as $n \to +\infty$. In particular, $x_n \in B_{\rho}(x_0)$ is an interior maximum point for $u_n - \varphi$.

We claim
$$
     D^2 v(x_n) \not\in \widetilde{\Theta}(x_n) = [- \Theta(x_n)^{\circ} ]^c
$$
and hence $- D^2 v(x_n) \in \Theta(x_n)^{\circ}$ which is stronger than the condition \eqref{L3.5} needed for the contradiction. Using $u_n \in \TSH(x_n)$ it suffices to find triples $(\veps_n, r_n, a_n)$ such that
\begin{equation}\label{L8}
\mbox{ $(u_n + v - a_n)(x_n) = 0$ \quad and \quad $(u_n + v - a_n)(x) \leq -  \veps_n |x - x_n|^2, \forall \ x \in B_{r_n}(x_n)$,}
\end{equation}
where $x_n$ realizes the maximum on $\overline{B}_r(x_0)$ of $u_n - \varphi = u_n + v - a + \veps Q_{x_0}$. Choose $r_n$ small enough to ensure that $\overline{B}_{r_n}(x_n) \subset \overline{B}_r(x_0)$ and $x_n$ realizes the maximum on $\overline{B}_{r_n}(x_n)$.
Hence one has
$$
(u_n + v - a)(x) + \frac{\veps}{2}|x-x_0|^2 \leq M_n := (u_n + v - a)(x_n) + \frac{\veps}{2}|x_n-x_0|^2, \ \forall \ x \in B_{r_n}(x_n),
$$
which is equivalent to
$$
    (u_n + v - a + \veps Q_{x_0} - \veps Q_{x_n})(x) - M_n \leq - \frac{\veps}{2}|x-x_n|^2, \ \forall \ x \in B_{r_n}(x_n),
$$
where $(u_n + v - a + \veps Q_{x_0} - \veps Q_{x_n})(x_n) - M_n = 0$. This is \eqref{L8} with $\veps_n = \veps/2$ and
$$
a_n(x) := a(x) - \frac{\veps}{2}|x-x_0|^2 + \frac{\veps}{2}|x-x_n|^2 + M_n,
$$
which is affine.

For the {\em supremum over locally bounded families property} (S), the argument is also by contradiction. Suppose that $u^* \not\in \TSH(x_0)$ for some $x_0 \in \Omega$. Hence the exist $v \in C^2(\Omega)$ with $D^2 v(x_0) \in \widetilde{\Theta}(x_0)^{\circ} = - [\Theta(x_0)^c]$ and a triple $(\veps, r, a)$ such that
\begin{equation}\label{S1}
\mbox{ $(u^* + v - a)(x_0) = 0$ \quad and \quad $ (u^* + v - a)(x) \leq  - \veps|x-x_0|^2, \ \forall \ x \in B_{r}(x_0),$}
\end{equation}
where we have again used Remark \ref{use_TSH} to select $D^2 v(x_0)$ in the interior of $\widetilde{\Theta}(x_0)$. Using
$$
u^*(x) = \lim_{k \to + \infty} \left( \sup_{y \in \overline{B}_{1/k}(x_0)} \left\{ \sup_{f \in \mathcal{F}} f(y) \right\} \right)
$$
there exist sequences $\{y_k\}_{k \in \N} \subset \Omega$ and $\{f_k\}_{k \in \N} \subset \mathcal{F}$ such that
\begin{equation}\label{S2}
y_k \to x_0 \quad \text{and} \quad f_k(y_k) \to u^*(x_0).
\end{equation}
As in the proof of (L), the idea is to construct a sequence $x_k \to x_0$ such that
\begin{equation}\label{S3}
- D^2 v(x_k) \in \Theta(x_k)^{\circ} = -[\widetilde{\Theta}(x_k)]^{c},
\end{equation}
which leads to a contradiction to $- D^2 v(x_0) \not\in \Theta(x_0)$ by the upper semicontinuity of $\Theta$ and the regularity of $v$.

To construct the desired sequence, define $- \varphi :=  v -a + \veps Q_{x_0}$ and use \eqref{S1} to find
\begin{equation}\label{S4}
\mbox{$(u^* - \varphi)(x_0) = 0$ \quad and \quad $ (u^* - \varphi)(x) \leq  - \frac{\veps}{2} |x-x_0|^2, \ \forall \ x \in B_{r}(x_0).$}
\end{equation}
For each $k \in \N$, select $x_k \in \overline{B}_{\rho}(x_0) \subset B_r(x_0)$ to be a point which realizes the maximum over $\overline{B}_{\rho}(x_0)$ of the upper semicontinuous function $f_k - \varphi$. Extract a subsequence (still called $\{x_k\}_{k \in \N}$) such that $x_k \to \hat{x}$ for some $\hat{x} \in  \overline{B}_{\rho}(x_0)$. We claim $\hat{x} = x_0$. Indeed, since $y_k \to x_0$ for each $k$ large enough one has
$$
f_k(y_k) - \varphi(y_k) \leq f_k(x_k) - \varphi(x_k).
$$
Letting $k \to +\infty$ in this inequality and using \eqref{S2}, \eqref{S4}, the definition of $u^*$ and the continuity of $\varphi$ one has
\begin{align*}
0 & = \liminf_{k \to +\infty} [f_k(y_k) - \varphi(y_k)] \leq \liminf_{k \to +\infty} f_k(x_k) - \varphi(\hat{x}) \\
    & = \limsup_{k \to +\infty} u^*(x_k) - \varphi(\hat{x}) \leq u^*(\hat{x}) - \varphi(\hat{x}),
\end{align*}
where by \eqref{S1} and the definition of $\varphi$ one has
$$
u^*(x) - \varphi(x) \leq - \frac{\veps}{2} |x - x_0|^2 \leq 0 = (u^* - \varphi)(x_0) , \ \forall \ x \in B_{r}(x_0).
$$
Hence $\hat{x} = x_0$ as claimed. In particular, $x_k \in B_{\rho}(x_0)$ for each large $k$.

It remains only to verify that \eqref{S3} holds. Since $f_k - \varphi = f_k + v -a + \veps Q_{x_0}$ takes on its maximum value $M_k$ over $\overline{B}_{\rho}(x_0) \subset B_r(x_0)$ in the point $x_k$, one has
\begin{equation}\label{S5}
\mbox{$(f_k + v - a_k)(x_k) = 0$ \quad and \quad $ (f_k + v - a_k)(x_k) \leq  - \frac{\veps}{2} |x-x_k|^2, \ \forall \ x \in B_{\rho}(x_0),$}
\end{equation}
where $a_k:= a + \veps Q_{x_0} - \veps Q_{x_k} - M_k$ is affine. From \eqref{S5} it follows that $D^2v(x_k) \not\in \widetilde{\Theta}(x_k)$ since $f_k \in \TSH(x_k)$.
\end{proof}

\section{Uniform upper semicontinuity and the comparison principle}\label{uusc_cp}

In this section, we will give a simple sufficient condition on an elliptic map $\Theta$ which ensures the validity of the {\em comparison principle}; that is, if $u \in \USC(\overline{\Omega})$ and $w \in \LSC(\overline{\Omega})$ are $\Theta$-subharmonic and $\Theta$-superharmonic respectively in $\Omega$, then
\begin{equation}\label{CP_formula}
\mbox{$u \leq w$ on $\partial \Omega \ \ \Rightarrow \ \ u \leq w$ in $\Omega$.}
\end{equation}
If at least one of the functions were regular, say $w \in C^2(\Omega)$, then by setting $v:=-w$ the coherence property Proposition \ref{coherence} yields $D^2v(x) \in \widetilde{\Theta}(x)$ for every $x \in \Omega$. Since $u \in \TSH(\Omega)$ one has then $u+v \in \SA(\Omega)$; that is, for each $a$ affine
$$
\mbox{$u + v \leq a$ on $\partial \Omega \ \ \Rightarrow \ \ u + v \leq a$ in $\Omega$,}
$$
which is just \eqref{CP_formula} for $a = 0$. Hence, the point is to show that \eqref{CP_formula} holds if both $u$ and $w$ are just semi-continuous. As shown in \cite{HL09}, by using the {\em maximum principle for subaffine functions} one can reduce the the comparison principle for semi-continuous $u$ and $w$ to the validity the so-called {\em subaffine theorem}. When $\Theta$ is a constant elliptic map, the subaffine theorem is the content of Theorem 6.5 of \cite{HL09} and we will show that it continues to hold provided that the elliptic map is {\em uniformly upper semicontinuous.}

We begin with the needed notion of regularity, which is just the uniform version of Definition \ref{defn:uscTheta}.
\begin{defn}\label{defn:uusc} A set valued map $\Theta: \Omega \multimap \Ss(N)$ is said to be {\em uniformly upper semicontinuous in $\Omega$} if for every $\veps > 0$ there exists $\delta = \delta(\veps)$ such that
\begin{equation}\label{uusc1}
\Theta(B_{\delta}(x_0) \cap \Omega) \subset N_{\veps}(\Theta(x_0)), \ \ \forall \ x_0 \in \Omega.
\end{equation}
\end{defn}
When $\Theta$ is an elliptic map (i.e.\ $\Theta$ takes values in $\mathcal{E}$), we have the following equivalent formulation of uniform upper semicontinuity, which will be used repeatedly.

\begin{prop}\label{uusc} An elliptic map $\Theta$ is uniformly upper semicontinuous in $\Omega$ if and only if for every $\veps > 0$ there exists $\delta = \delta(\veps)$ such that
\begin{equation}\label{uusc2}
\mbox{$x,y \in \Omega$ with $|x-y| < \delta \ \Rightarrow \ \Theta(x) + \veps I \subset \Theta(y)$ \ \ and \ \ $\Theta(y) + \veps I \subset \Theta(x)$.}
\end{equation}
\end{prop}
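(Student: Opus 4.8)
The plan is to establish the two implications separately. The one nontrivial ingredient is the elementary observation that, for $M \in \Ss(N)$,
\[
\|M\| < \veps \ \Longrightarrow \ \lambda_1(M + \veps I) = \lambda_1(M) + \veps > 0, \ \text{ i.e. } \ M + \veps I \in \mathcal{P};
\]
equivalently, if $A, B \in \Ss(N)$ with $\|A - B\| < \veps$ then $A + \veps I = B + (A - B + \veps I) \in B + \mathcal{P}$. Combined with the defining stability property $\Theta(x) + \mathcal{P} \subset \Theta(x)$ of elliptic sets, this is what converts the ``$\veps$-enlargement'' formulation \eqref{uusc1} into the ``shift by $\veps I$'' formulation \eqref{uusc2}.

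For the implication \eqref{uusc2} $\Rightarrow$ \eqref{uusc1}: given $\veps > 0$, I would apply \eqref{uusc2} with $\veps/2$ in place of $\veps$ to obtain $\delta = \delta(\veps/2) > 0$. If $x_0 \in \Omega$ and $y \in B_{\delta}(x_0) \cap \Omega$, then $|x_0 - y| < \delta$, so $\Theta(y) + (\veps/2) I \subset \Theta(x_0)$; hence for every $A \in \Theta(y)$ the matrix $A + (\veps/2) I$ lies in $\Theta(x_0)$ and $\|A - (A + (\veps/2) I)\| = \veps/2 < \veps$, so $A \in N_{\veps}(\Theta(x_0))$. This gives $\Theta(B_{\delta}(x_0) \cap \Omega) \subset N_{\veps}(\Theta(x_0))$ with $\delta$ independent of $x_0$, which is \eqref{uusc1}. (No ellipticity is needed for this direction.)

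For the converse, assume $\Theta$ is uniformly upper semicontinuous and fix $\veps > 0$; let $\delta = \delta(\veps)$ be as in Definition \ref{defn:uusc}. Suppose $x, y \in \Omega$ with $|x - y| < \delta$ and let $A \in \Theta(x)$. Since $x \in B_{\delta}(y) \cap \Omega$, uniform upper semicontinuity gives $A \in \Theta(B_{\delta}(y) \cap \Omega) \subset N_{\veps}(\Theta(y))$, so there is $B \in \Theta(y)$ with $\|A - B\| < \veps$. By the observation above, $A - B + \veps I \in \mathcal{P}$, and since $\Theta(y) \in \mathcal{E}$,
\[
A + \veps I = B + (A - B + \veps I) \in \Theta(y) + \mathcal{P} \subset \Theta(y),
\]
which proves $\Theta(x) + \veps I \subset \Theta(y)$. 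Interchanging the roles of $x$ and $y$ (legitimate since $|x - y| < \delta$ is symmetric) yields $\Theta(y) + \veps I \subset \Theta(x)$, establishing \eqref{uusc2}.

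I expect the substance to lie entirely in this last direction: upgrading the ``soft'' information that $A$ lies within operator-norm distance $\veps$ of $\Theta(y)$ to the sharp inclusion $A + \veps I \in \Theta(y)$ genuinely uses that $\Theta(y)$ is an elliptic set; for an arbitrary closed-valued map the equivalence would fail. The remaining bookkeeping with the modulus $\delta(\cdot)$ is routine.
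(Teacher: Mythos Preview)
Your proof is correct and follows essentially the same approach as the paper's: both directions hinge on the observation that $\|A-B\|<\veps$ forces $A-B+\veps I\in\mathcal{P}$, whence ellipticity of $\Theta(y)$ upgrades $A\in N_\veps(\Theta(y))$ to $A+\veps I\in\Theta(y)$. The only cosmetic difference is that in the direction \eqref{uusc2} $\Rightarrow$ \eqref{uusc1} you rescale by applying \eqref{uusc2} at level $\veps/2$ to land exactly in $N_\veps$, whereas the paper keeps $\veps$ and lands in $N_{2\veps}$.
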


\begin{proof} For $x,y \in \Omega$ with $|x-y| < \delta$ and using \eqref{uusc1} with $x_0 = y$ and $x_0 = x$ one has
\begin{equation}\label{uusc3}
\mbox{$ \Theta(x) \subset N_{\veps}(\Theta(y))$ \quad and \quad $ \Theta(y) \subset N_{\veps}(\Theta(x))$.}
\end{equation}
For each $A \in \Theta(x)$, the first inclusion in \eqref{uusc3} yields $A = B + M$ with $B \in \Theta(y)$ and $||M|| < \veps$ so that
$$
    A + \veps I = B + M + \veps I \in \Theta(y) + (M + \veps I) \in \Theta(y)
$$
since $M + \veps I \in \mathcal{P}$ and $\Theta(y)$ is an elliptic set. Thus the first inclusion in \eqref{uusc2} holds. A similar argument gives the second inclusion in \eqref{uusc2}.

Conversely, assuming \eqref{uusc2} and setting $x_0 = y$ one has
$$
    \Theta(x) + \veps I \in \Theta(x_0), \ \forall \ x \in B _{\delta}(x_0) \cap \Omega.
$$
Hence each $A \in \Theta(x)$ can be written as $(A + \veps I) - \veps I := B + M$ with $B \in \Theta(x_0)$ and $||M|| = \veps < 2 \veps$, which yields \eqref{uusc1} with $2 \veps$ in place of $\veps$, for example.
\end{proof}

Uniform upper semicontinuity is preserved when passing to the dual map.

\begin{prop}\label{uuscd} An elliptic map $\Theta$ is uniformly upper semicontinuous in $\Omega$ if and only the dual map $\widetilde{\Theta}$ is.
\end{prop}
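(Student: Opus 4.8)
The plan is to reduce everything to the characterization of uniform upper semicontinuity for elliptic maps provided by Proposition \ref{uusc}, namely the pair of inclusions \eqref{uusc2}, and to track how the enlargement operation $\Phi \mapsto \Phi + \veps I$ interacts with the duality $\Phi \mapsto \widetilde{\Phi} = -(\Phi^{\circ})^c$ of \eqref{DM}. The whole argument then rests on two elementary set-theoretic observations, after which the two inclusions in \eqref{uusc2} simply swap their roles when one passes from $\Theta$ to $\widetilde{\Theta}$.

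First I would record the needed facts for an arbitrary $\Phi \in \mathcal{E}$ and $\veps > 0$. Since translation by $\veps I$ is a homeomorphism of $\Ss(N)$ one has $(\Phi + \veps I)^{\circ} = \Phi^{\circ} + \veps I$, and since $(S + M)^c = S^c + M$ for every $S \subset \Ss(N)$ and $M \in \Ss(N)$, negating gives the identity
$$
\widetilde{\Phi + \veps I} = -\big((\Phi + \veps I)^{\circ}\big)^c = -\big((\Phi^{\circ})^c + \veps I\big) = \widetilde{\Phi} - \veps I .
$$
Second, the duality is order-reversing: if $\Phi_1 \subset \Phi_2$ then $\Phi_1^{\circ} \subset \Phi_2^{\circ}$, hence $(\Phi_2^{\circ})^c \subset (\Phi_1^{\circ})^c$, and negating yields $\widetilde{\Phi_2} \subset \widetilde{\Phi_1}$.

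Now assume $\Theta$ is uniformly upper semicontinuous and fix $\veps > 0$. By Proposition \ref{uusc} there is $\delta = \delta(\veps)$ such that $|x - y| < \delta$ implies both $\Theta(x) + \veps I \subset \Theta(y)$ and $\Theta(y) + \veps I \subset \Theta(x)$. Applying the order-reversing property and then the displayed identity to the first inclusion gives $\widetilde{\Theta}(y) \subset \widetilde{\Theta(x) + \veps I} = \widetilde{\Theta}(x) - \veps I$, i.e.\ $\widetilde{\Theta}(y) + \veps I \subset \widetilde{\Theta}(x)$; the second inclusion likewise gives $\widetilde{\Theta}(x) + \veps I \subset \widetilde{\Theta}(y)$. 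Thus $\widetilde{\Theta}$ satisfies \eqref{uusc2} with the same modulus $\delta(\veps)$, and since $\widetilde{\Theta}$ is an elliptic map by \eqref{preserve_ell}, Proposition \ref{uusc} shows that $\widetilde{\Theta}$ is uniformly upper semicontinuous. The converse is immediate from reflexivity \eqref{reflexivity}: applying the implication just proved to the elliptic map $\widetilde{\Theta}$ shows that if $\widetilde{\Theta}$ is uniformly upper semicontinuous then so is $\widetilde{\widetilde{\Theta}} = \Theta$.

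There is no serious obstacle here; the only point requiring a moment of care is the identity $\widetilde{\Phi + \veps I} = \widetilde{\Phi} - \veps I$, which says that dualizing converts an enlargement by $\veps I$ into a shrinkage by $\veps I$ — precisely the mechanism that makes the two inclusions of \eqref{uusc2} trade places between $\Theta$ and $\widetilde{\Theta}$ while keeping the same modulus $\delta$.
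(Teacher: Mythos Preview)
Your proof is correct and follows essentially the same approach as the paper: both reduce to the characterization \eqref{uusc2} of Proposition \ref{uusc} and exploit the order-reversing property of duality together with the translation identity $\widetilde{\Phi + \veps I} = \widetilde{\Phi} - \veps I$. The only minor differences are cosmetic: you supply short direct proofs of these two elementary identities (the paper just cites Section 4 of \cite{HL09}), and you invoke reflexivity \eqref{reflexivity} for the converse whereas the paper repeats the argument with the roles reversed.
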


\begin{proof} Assuming that $\Theta$ is uniformly upper semicontinuous, let $\veps > 0$ and $\delta > 0$ be as in Definition \ref{defn:uusc}. For $x,y \in \Omega$ with $|x-y|< \delta$, formula \eqref{uusc2} yields $\Theta(y) + \veps I \subset \Theta(x)$, so $\widetilde{\Theta}(x) \subset \widetilde{\Theta(y) + \veps I} = \widetilde{\Theta}(y) - \veps I$ by the elementary properties
$$
    \Theta_1 \subset \Theta_2 \ \Rightarrow \widetilde{\Theta_2} \subset \widetilde{\Theta_1}, \ \ \Theta_1, \Theta_2 \in \mathcal{E}
$$
and
$$
    \widetilde{\Theta + A} = \widetilde{\Theta} - A, \ \ \Theta \in \mathcal{E}, A \in \Ss(N)
$$
of elliptic duals. These properties follow directly from the definition \eqref{DM} (as shown in Section 4 of \cite{HL09}). Hence $\widetilde{\Theta}(x) + \veps I \subset \widetilde{\Theta}(y)$ and therefore the elliptic map $\widetilde{\Theta}$ is uniformly upper continuous Proposition \ref{uusc}. The converse uses the same argument.
\end{proof}

Elliptic maps take values in $\mathcal{E} \subset \mathcal{K}(\Ss(N))$, where $\mathcal{K}(\Ss(N))$ are the closed subsets of $\Ss(N)$. If one considers the {\em Hausdorff distance} on $\mathcal{K}(\Ss(N))$ defined by
\begin{equation}\label{HD}
\mbox{ $d_{\mathcal{H}}(\Phi, \Psi) := \inf \{ r > 0: \ \Phi \subset N_r(\Psi) \ \text{and} \ \Psi \subset N_r(\Phi) \}$,}
\end{equation}
then one knows that $(\mathcal{K}(\Ss(N)), d_{\mathcal{H}})$ is a complete metric space since $\Ss(N)$ is a complete with respect to the metric ${\rm dist}(A,B) = ||A-B||$ (see Proposition 7.3.3 and Proposition 7.3.7 of \cite{BBI01}, for example). Since the subsets of $\Ss(N)$ need not be bounded, the metric can take on the value $+ \infty$; in particular, one has
\begin{equation}\label{HDempty}
\mbox{ $d_{\mathcal{H}}(\Phi, \emptyset) = +\infty$ for each non empty $\Phi \in \mathcal{K}(\Ss(N))$.}
\end{equation}
A surprising fact is that the uniform upper semicontinuity of an elliptic map $\Theta$ is equivalent to the uniform continuity of $\Theta$ with respect to the metric topology on $\mathcal{K}(\Ss(N))$.

\begin{prop}\label{UCHD} A set valued map $\Theta: \Omega \multimap \Ss(N)$ which takes values in the elliptic subsets $\mathcal{E}$ is uniformly upper semicontinuous in $\Omega$ if and only if $\Theta : \Omega \to \mathcal{E} \subset \mathcal{K}(\Ss(N))$ is uniformly continuous; that is, if for each $\veps > 0$ there exists $\delta = \delta(\veps)$ such that
\begin{equation}\label{UC}
\mbox{ $d_{\mathcal{H}}(\Theta(x), \Theta(y)) < \veps$ for each $x,y \in \Omega$ such that $|x-y|< \delta$.}
\end{equation}
\end{prop}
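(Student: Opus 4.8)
The plan is to argue the two implications separately. One of them is essentially tautological; the other --- the genuinely surprising one --- I would extract as a short consequence of the symmetric translation characterization already proved in Proposition \ref{uusc}.

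I would first dispose of the implication ``uniformly continuous $\Rightarrow$ uniformly upper semicontinuous'': if \eqref{UC} holds and $|x - x_0| < \delta(\veps)$, then $d_{\mathcal{H}}(\Theta(x), \Theta(x_0)) < \veps$ gives in particular $\Theta(x) \subset N_\veps(\Theta(x_0))$, which (ranging over all $x_0$ with the same $\delta$) is exactly \eqref{uusc1}. No ellipticity enters here --- the point is simply that a Hausdorff estimate controls both directions of set inclusion, whereas upper semicontinuity only demands one of them.

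For the converse I would invoke Proposition \ref{uusc} to trade uniform upper semicontinuity for the equivalent, and crucially \emph{symmetric}, condition \eqref{uusc2}. Given $\veps > 0$, applying \eqref{uusc2} with $\veps / 2$ in place of $\veps$ produces $\delta = \delta(\veps/2)$ such that $|x - y| < \delta$ forces both $\Theta(x) + \frac{\veps}{2} I \subset \Theta(y)$ and $\Theta(y) + \frac{\veps}{2} I \subset \Theta(x)$. Then each $A \in \Theta(x)$ satisfies $A + \frac{\veps}{2} I \in \Theta(y)$ with $|| A - ( A + \frac{\veps}{2} I ) || = \frac{\veps}{2}$, hence $A \in N_r(\Theta(y))$ for every $r > \veps/2$; thus $\Theta(x) \subset N_r(\Theta(y))$, and by symmetry $\Theta(y) \subset N_r(\Theta(x))$ too. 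By the definition \eqref{HD} this yields $d_{\mathcal{H}}(\Theta(x), \Theta(y)) \leq \veps / 2 < \veps$, i.e.\ \eqref{UC}. The same translation bound keeps the distance finite on nearby fibers, so the possibility that $d_{\mathcal{H}}$ takes the value $+\infty$ (see \eqref{HDempty}) is irrelevant here.

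I do not expect a real obstacle once Proposition \ref{uusc} is available; the only conceptual subtlety --- and it is already absorbed into that proposition --- is why one-sided set-theoretic control cannot let the fibers ``collapse''. The answer is the positivity property $\Theta(x) + \mathcal{P} \subset \Theta(x)$ built into the definition of elliptic sets, which upgrades an $\veps$-enlargement containment into an $\veps I$-translation containment; being invariant under exchanging $x$ and $y$, the latter automatically forces the reverse inclusion as well. The remaining work is the harmless bookkeeping of a factor $2$ needed to obtain the strict inequality in \eqref{UC}.
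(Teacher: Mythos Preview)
Your proposal is correct and follows essentially the same route as the paper: both arguments hinge on Proposition \ref{uusc} to pass between the $\veps$-enlargement formulation \eqref{uusc1} and the symmetric $\veps I$-translation formulation \eqref{uusc2}, and both obtain the Hausdorff bound from the latter by noting that $A + \tfrac{\veps}{2}I \in \Theta(y)$ is a point of $\Theta(y)$ at distance $\veps/2$ from $A$. One small difference worth noting: for the implication ``uniformly continuous $\Rightarrow$ uniformly upper semicontinuous'' you go directly from \eqref{UC} to \eqref{uusc1}, which indeed needs no ellipticity, whereas the paper instead deduces \eqref{uusc2} from \eqref{UC} via the positivity property; your route is marginally cleaner there, but the content is the same.
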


\begin{proof} We will make use of the following equivalent representation of the Hausdorff distance \eqref{HD}
\begin{equation}\label{HD2}
\mbox{ $\displaystyle{ d_{\mathcal{H}}(\Phi, \Psi) = \max \{ \sup_{A \in \Phi} \inf_{B \in \Psi} ||A-B||, \sup_{B \in \Psi} \inf_{A \in \Phi} ||A-B|| \}}$.}
\end{equation}
Assuming that $\Theta$ is uniformly upper semicontinuous, let $\veps > 0$ and $\delta = \delta(\veps)$ be as in the definition \eqref{uusc2} and let $x,y \in \Omega$ be such that $|x - y| < \delta$. For each $A \in \Theta(x)$ there exits $B \in \Theta(y)$ such that $A + \frac{\veps}{2} I = B$ and hence
$$
        \mbox{ $\displaystyle{\inf_{B \in \Theta(y)} ||A-B|| < \veps}$ for each $A \in \Theta(x)$ and each $y \in B_{\delta}(x) \cap \Omega$,}
$$
which yields
\begin{equation}\label{UC2}
     \mbox{ $\displaystyle{ \sup_{A \in \Theta(x)} \inf_{B \in \Theta(y)} ||A-B|| < \veps}$ for each $x,y \in \Omega$ such that $|x-y|< \delta$.}
\end{equation}
Starting from the second inclusion in \eqref{uusc2}, one obtains
\begin{equation}\label{UC3}
     \mbox{ $\displaystyle{ \sup_{B \in \Theta(y)} \inf_{A \in \Theta(x)} ||A-B|| < \veps} $ for each $x,y \in \Omega$ such that $|x-y|< \delta$.}
\end{equation}
Combining \eqref{UC2} and \eqref{UC3} and using \eqref{HD2} yields the uniform continuity \eqref{UC}.

Conversely, if $\Theta$ is uniformly continuous, let $\veps > 0$ and $\delta = \delta(\veps)$ be as in the definition \eqref{UC}. Using the first term in the representation \eqref{HD2}, for each $x,y \in \Omega$ with $|x-y|< \delta$ one has
$$
     \mbox{ $\displaystyle{ \inf_{B \in \Theta(y)} ||A-B|| < \veps}$ for each $A \in \Theta(x)$,}
$$
and hence there exists $B \in \Theta(y)$ such that $A - B:= M$ satisfies $||M|| \leq \veps$. Since $M + \veps I \in \mathcal{P}$ and $\Theta(y)$ is elliptic, one has
$$
A + \veps I = B + (M + \veps I) \in \Theta(y);
$$
that is,
$$
     \mbox{ $\Theta(x) + \veps I \subset \Theta(y)$ for each $x,y \in \Omega$ such that $|x-y|< \delta$,}
$$
which is the first inclusion in \eqref{uusc2}. Interchanging the roles of $x$ and $y$ and using the second term in \eqref{HD2} gives the second inclusion in \eqref{uusc2}.
\end{proof}

Exploiting this equivalence, one can prove that uniformly upper semicontinuous elliptic maps on bounded domains can be extended to the boundary, where the uniform upper semicontinuity on $\overline{\Omega}$ just means that \eqref{uusc2} holds for all $x,y \in \overline{\Omega}$ with $|x-y| < \delta$.

\begin{prop}\label{extension} Let $\Theta$ be a uniformly upper semicontinuous elliptic map on $\Omega$. Then $\Theta$ extends to a a uniformly upper semicontinuous elliptic map on $\overline{\Omega}$.
\end{prop}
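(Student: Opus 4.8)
The plan is to use the characterization from Proposition \ref{UCHD}: a uniformly upper semicontinuous elliptic map $\Theta$ is precisely a uniformly continuous map $\Theta : \Omega \to (\mathcal{E}, d_{\mathcal{H}}) \subset (\mathcal{K}(\Ss(N)), d_{\mathcal{H}})$. Since $(\mathcal{K}(\Ss(N)), d_{\mathcal{H}})$ is a complete metric space, a standard theorem asserts that a uniformly continuous map from a metric space $\Omega$ into a complete metric space extends uniquely to a uniformly continuous map on the completion $\overline{\Omega}$. So the first step is to invoke this classical extension result to obtain $\Theta : \overline{\Omega} \to \mathcal{K}(\Ss(N))$, uniformly continuous. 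The extension is concretely given by $\Theta(x_0) := \lim_{n \to \infty} \Theta(x_n)$ in the Hausdorff metric, for any sequence $x_n \in \Omega$ with $x_n \to x_0 \in \partial\Omega$; uniform continuity guarantees the Cauchy property of $\{\Theta(x_n)\}$ and independence of the chosen sequence.

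The second step is to check that the extended map still takes values in the elliptic subsets $\mathcal{E}$, i.e.\ that $\mathcal{E}$ is closed in $(\mathcal{K}(\Ss(N)), d_{\mathcal{H}})$. We need to verify, for $\Phi = \lim_n \Phi_n$ with each $\Phi_n \in \mathcal{E}$: (i) $\Phi$ is nonempty --- immediate since $d_{\mathcal{H}}(\Phi,\emptyset) = +\infty$ by \eqref{HDempty}, so the limit of nonempty sets cannot be $\emptyset$; (ii) $\Phi$ is closed --- automatic as an element of $\mathcal{K}(\Ss(N))$; (iii) $\Phi$ is proper --- this uses the uniform control: from \eqref{uusc2} extended to $\overline{\Omega}$, for $x \in \Omega$ near the boundary point $x_0$ we get $\Theta(x) + \veps I \subset \Theta(x_0)$, but it is cleaner to argue directly that if $\Phi = \Ss(N)$ then $\Phi_n \supset N_{-\delta}$-type enlargements forcing $\Phi_n$ eventually non-proper, a contradiction; (iv) stability under $\mathcal{P}$, i.e.\ $\Phi + \mathcal{P} \subset \Phi$ --- given $A \in \Phi$, $P \in \mathcal{P}$, pick $A_n \in \Phi_n$ with $A_n \to A$ (possible since $d_{\mathcal{H}}(\Phi_n,\Phi)\to 0$), then $A_n + P \in \Phi_n$, and a diagonal/Hausdorff-convergence argument shows $A + P \in \Phi$ because $\Phi$ contains the Hausdorff limit of $\{A_n + P\}$. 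Thus $\Phi \in \mathcal{E}$.

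The third step is to verify that the extended $\Theta$ on $\overline{\Omega}$ is still uniformly upper semicontinuous in the sense that \eqref{uusc2} holds for all $x,y \in \overline{\Omega}$ with $|x-y| < \delta$. Since we already know $\Theta : \overline{\Omega} \to \mathcal{E} \subset \mathcal{K}(\Ss(N))$ is uniformly continuous (the extension of a uniformly continuous function is uniformly continuous, with the same modulus), we may simply reapply the argument in the proof of Proposition \ref{UCHD}: uniform continuity with respect to $d_{\mathcal{H}}$ together with the ellipticity of the values $\Theta(x), \Theta(y) \in \mathcal{E}$ gives, for $|x - y| < \delta(\veps)$, the inclusion $\Theta(x) + \veps I \subset \Theta(y)$ (absorbing the Hausdorff-error matrix $M$ with $\|M\| \le \veps$ into $\mathcal{P}$ since $M + \veps I \geq 0$), and symmetrically for the other inclusion.

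The main obstacle I expect is the properness of the extended values, i.e.\ ruling out $\Theta(x_0) = \Ss(N)$ for $x_0 \in \partial\Omega$; the nonemptiness is trivial via \eqref{HDempty} but properness needs the uniform control from \eqref{uusc2} rather than mere pointwise ellipticity. Concretely, if some $A \notin \Theta(x_0)$, one must produce, for $x \in \Omega$ near $x_0$, a matrix not in $\Theta(x)$: using $\widetilde{\Theta}$ this follows because $\widetilde{\Theta}(x_0)$ is nonempty (as $\widetilde{\Theta}$ is also a uniformly upper semicontinuous elliptic map by Proposition \ref{uuscd}, so it extends too and its extended values are nonempty by \eqref{HDempty}), and $\widetilde{\Theta}(x_0) \neq \emptyset$ is exactly the statement that $\Theta(x_0)^\circ \neq \Ss(N)$, hence $\Theta(x_0)$ is proper. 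Everything else is a routine transcription of the arguments already present for Propositions \ref{uusc}, \ref{uuscd}, and \ref{UCHD}.
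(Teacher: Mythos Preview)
Your overall scheme matches the paper's: use Proposition \ref{UCHD} to view $\Theta$ as a uniformly continuous map into the complete metric space $(\mathcal{K}(\Ss(N)), d_{\mathcal{H}})$, extend by completeness, and then check that the limiting values remain in $\mathcal{E}$. Your handling of nonemptiness (via \eqref{HDempty}), closedness, and the positivity property $\Theta(x_0) + \mathcal{P} \subset \Theta(x_0)$ is essentially identical to the paper's.

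The weak point is properness. Your duality argument has a gap: the assertion that ``$\widetilde{\Theta}(x_0) \neq \emptyset$ is exactly the statement that $\Theta(x_0)^\circ \neq \Ss(N)$'' tacitly identifies the Hausdorff-limit extension of $\widetilde{\Theta}$ at $x_0$ with the set-theoretic dual $[-\Theta(x_0)^\circ]^c$ of the extended $\Theta(x_0)$. These are a priori two different objects (duality involves complements and interiors, which do not commute with Hausdorff limits in general), and showing they agree would already require knowing $\Theta(x_0) \in \mathcal{E}$. The idea can be rescued without that identification: if $B$ lies in the Hausdorff limit of $\{\widetilde{\Theta}(x_k)\}$, approximate by $B_k \in \widetilde{\Theta}(x_k)$ and use \eqref{sum_duals} to deduce $-B - I \notin \Theta(x_0)$; otherwise an approximating sequence $A_k \in \Theta(x_k)$ with $A_k \to -B-I$ would give $\lambda_N(A_k + B_k) \to \lambda_N(-I) = -1$, contradicting $A_k + B_k \in \widetilde{\mathcal{P}}$. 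Your earlier remark invoking \eqref{uusc2} can also be turned into a valid argument (one first derives $\Theta(x_0) + \veps I \subset \Theta(x)$ for nearby $x \in \Omega$ by passing \eqref{uusc2} through the Hausdorff limit, and then properness of $\Theta(x)$ forces properness of $\Theta(x_0)$), but you set it aside.

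The paper proves properness by a different, self-contained route: it shows that $d_{\mathcal{H}}(\Phi, \Ss(N)) = +\infty$ for \emph{every} elliptic set $\Phi$, so $\Ss(N)$ can never arise as a Hausdorff limit of elliptic sets. The argument is that $\Phi^c = -\widetilde{\Phi}^{\circ}$ contains, after a translation placing $0$ in $-\widetilde{\Phi}$, the open cone $(-\mathcal{P})^{\circ}$, which in turn contains the balls $\{A : \|tI - A\| < |t|\}$ of arbitrarily large radius as $t \to -\infty$. Hence no finite enlargement of $\Phi$ can cover $\Ss(N)$.
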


\begin{proof} We first construct the extension in the obvious way. Consider an arbitrary $x_0 \in \partial \Omega$ and select a sequence $\{x_k\}_{k \in \N} \subset \Omega$ such that $x_k \to x_0$ as $k \to +\infty$. Since $\{x_k\}_{k \in \N}$ is a Cauchy sequence and $\Theta$ is uniformly continuous by Proposition \ref{UCHD}, $\{\Theta(x_k)\}_{k \in \N}$ is a Cauchy sequence in $\mathcal{K}(\Ss(N))$, which is complete and hence there exists $\Theta(x_0) \in \mathcal{K}(\Ss(N))$ such that
\begin{equation}\label{HDL}
    d_{\mathcal{H}}(\Theta(x_k), \Theta(x_0)) \to 0 \ \text{as} \ k \to +\infty.
\end{equation}
This limiting set clearly does not depend on the sequence $\{x_k\}_{k \in \N}$ chosen. Doing this for each $x_0 \in \partial \Omega$ extends $\Theta$ to a uniformly continuous map from $\overline{\Omega}$ taking values in $\mathcal{K}(\Ss(N))$.

By construction, each limiting set $\Theta(x_0)$ is closed and must be nonempty, since otherwise \eqref{HDempty} would contradict the convergence \eqref{HDL}. It remains only to show that each $\Theta(x_0)$ is proper and satisfies the positivity property \eqref{elliptic2}. For the positivity property, one uses the fact each $A \in \Theta(x_0)$ is a limit in $\Ss(N)$ of a sequence $\{A_k\}_{k \in \N}$ with $A_k \in \Theta(x_k)$. Hence for each $A \in \Theta(x_0)$ and each $P \in \mathcal{P}$ one has
$$
   \mbox{$\displaystyle{ A + P = \lim_{k \to +\infty} (A_k + P)}$ \quad with \quad $A_k + P \in \Theta(x_k)$,}
$$
and hence $\Theta(x_0) + \mathcal{P} \subset \Theta(x_0)$, as desired. Finally, to show that each $\Theta(x_0)$ is proper, it suffices to show that
\begin{equation}\label{Ext2}
   \mbox{ $d_{\mathcal{H}}(\Theta, \Ss(N)) = +\infty$ for each elliptic set $\Theta$.}
\end{equation}
Indeed, if $\Theta_0 = \Ss(N)$ then applying \eqref{Ext2} with $\Theta = \Theta(x_k)$ would contradict the convergence \eqref{HDL}. To show that \eqref{Ext2} holds, it suffices to show that $\Theta^c = \Ss(N) \setminus \Theta$ contains balls of arbitrarily large radius so that no finite enlargement of $\Theta$ can exhaust $\Ss(N)$. For each elliptic $\Theta$, one has
$$
    \Theta^c = - \left( \widetilde{\Theta}^{\circ} \right)
$$
and hence $\Theta^c$ contains an open ball about some element $A_0 \in - \left( \widetilde{\Theta}^{\circ} \right)$. Translation by a fixed element of $\Ss(N)$ preserves the ellipticity of  $\Theta$ and hence one may assume that $A_0 = 0$. By the ellipticity of $\widetilde{\Theta}$ one has
$$
    - \widetilde{\Theta} - \mathcal{P} \subset - \widetilde{\Theta},
$$
and since $0 \in - \widetilde{\Theta}$ one concludes that $- \mathcal{P} \subset - \widetilde{\Theta}$ and hence
$$
     \mbox{ $(-\mathcal{P})^{\circ} \subset  \left(- \widetilde{\Theta} \right)^{\circ} = \Theta^c.$}
$$
It is easy to see that $(-\mathcal{P})^{\circ} \subset \Ss(N)$ contains balls of arbitrarily large radius. For example, for each $t < 0$ one has $tI \in (-\mathcal{P})^{\circ}$
and
$$
    \{ A \in \Ss(N): \ ||tI - A|| < |t| \} \subset (-\mathcal{P})^{\circ}.
$$
Indeed, for each $t < 0$ one has
$$
|t| > || \, |t| + A \, || = \max_{1 \leq k \leq N} \left| \lambda_k( |t|I + A) \right| =  \max_{1 \leq k \leq N} \left| |t| + \lambda_k(A) \right|,
$$
and hence $\lambda_k(A) < 0$ for each $k$. Hence $N_{|t|}(tI) \subset (-\mathcal{P})^{\circ} \subset \Theta^c$.
\end{proof}

This extension result will be useful for the applications to elliptic branches of \eqref{FNE}. More precisely, we will often require control on the associated elliptic map up to the boundary, but we would prefer to impose any needed structural conditions on $F(x,A)$ only for $x \in \Omega$. See Proposition \ref{UCbranch} for one such illustration.

As a final preparatory ingredient, the following consequence of the uniform upper semicontinuity of $\Theta$ will play a key role in the proof of the subaffine theorem, on which the comparison principle is based. It will also be used in the proof of interior continuity of the solution to the Dirichlet problem for $\Theta$-harmonic functions (see Step 5 of the proof of Theorem \ref{thm:EU} below) \footnote{Note that if $\Theta$ is a constant elliptic map, then a stronger consequence than \eqref{WTP_formula} follows, namely $u_{y; \veps} \in \TSH(\Omega_{\delta})$ for all $y \in B_{\delta}(0)$. This fact is a key ingredient in \cite{HL09}, and it may fail if $\Theta$ is a non-constant map. Uniform upper semicontinuity of $\Theta$ is used here to guarantee the milder property \eqref{WTP_formula}, which is sufficient for our purposes.}.

\begin{prop}\label{WTP} If $\Theta$ is a uniformly upper semicontinuous elliptic map on $\Omega$ and $u \in \TSH(\Omega)$, then the following property holds:
\begin{equation}\label{WTP_formula}
 \mbox{ $\forall \ \veps > 0 \ \exists \, \delta = \delta(\veps)$ such that $u_{y; \veps} := u( \cdot + y) + \frac{\veps}{2} | \cdot |^2 \in \TSH(\Omega_{\delta}), \ \forall \ y \in B_{\delta}(0)$,}
\end{equation}
where
\begin{equation}\label{Omega_delta}
\Omega_{\delta} := \{ x \in \Omega: \ {\rm dist}(x, \partial \Omega) > \delta \}.
\end{equation}
\end{prop}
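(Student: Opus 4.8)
The plan is to argue by contradiction along the lines of Remark \ref{use_TSH}, using the uniform upper semicontinuity of $\Theta$ to transfer the defining inclusion for the test function from the shifted point to a nearby point of $\Omega$. Fix $\veps > 0$. Since $\Theta$ is uniformly upper semicontinuous, Proposition \ref{uusc} gives $\delta_0 = \delta_0(\veps/2)$ such that $|x-y| < \delta_0$ with $x,y \in \Omega$ implies $\Theta(x) + \tfrac{\veps}{2} I \subset \Theta(y)$; by Proposition \ref{uuscd} the dual map $\widetilde{\Theta}$ enjoys the analogous property. Set $\delta := \min\{\delta_0/2, \text{(something making } \Omega_\delta \neq \emptyset)\}$, and note that for $y \in B_\delta(0)$ the translate $u(\cdot + y)$ is defined on $\Omega_\delta$ since ${\rm dist}(x,\partial\Omega) > \delta$ forces $x + y \in \Omega$. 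I would then suppose, for contradiction, that $u_{y;\veps} \notin \TSH(x_0)$ for some $x_0 \in \Omega_\delta$ and some $y \in B_\delta(0)$.

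Next I would unpack this failure. By Definition \ref{def:Weak_SH} and Remark \ref{use_TSH}, there is $w \in C^2(\Omega_\delta)$ with $D^2 w(x_0) \in \widetilde{\Theta}(x_0)^\circ = -[\Theta(x_0)^c]$ and a triple $(\veps', r, a)$ such that $(u_{y;\veps} + w - a)(x_0) = 0$ and $(u_{y;\veps} + w - a)(x) \le -\veps'|x - x_0|^2$ on $B_r(x_0)$. Writing out $u_{y;\veps}$, this reads
\[
(u(\cdot + y) + \tfrac{\veps}{2}|\cdot|^2 + w - a)(x_0) = 0, \qquad (u(\cdot + y) + \tfrac{\veps}{2}|\cdot|^2 + w - a)(x) \le -\veps'|x - x_0|^2 \ \text{on } B_r(x_0).
\]
Now change variables: put $z_0 := x_0 + y \in \Omega$ and let $\widehat{w}(z) := \tfrac{\veps}{2}|z - y|^2 + w(z - y) - a(z - y)$, viewed as a $C^2$ function near $z_0$; the displayed inequalities become the statement that $u + \widehat{w} - \widehat{a}$ (for a suitable affine $\widehat{a}$, absorbing the linear part) witnesses $u + (\widehat{w} - \widehat{a}) \notin \SA(z_0)$, i.e.\ (after discarding the affine piece, which does not affect the Hessian) that $u$ plus a $C^2$ function $v$ with $D^2 v(z_0) = \tfrac{\veps}{2} I + D^2 w(x_0)$ fails to be in $\SA(z_0)$.

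The key step is the ellipticity/uniform-semicontinuity bookkeeping: since $u \in \TSH(\Omega) \subset \TSH(z_0)$, if $v$ above had $D^2 v(z_0) \in \widetilde{\Theta}(z_0)$ we would get a contradiction; so it suffices to check $\tfrac{\veps}{2} I + D^2 w(x_0) \in \widetilde{\Theta}(z_0)$. We know $-D^2 w(x_0) \in \Theta(x_0)^c$, equivalently $D^2 w(x_0) \in \widetilde{\Theta}(x_0)^\circ$. Since $|z_0 - x_0| = |y| < \delta \le \delta_0/2 < \delta_0$, the uniform upper semicontinuity of $\widetilde{\Theta}$ (Proposition \ref{uuscd}, via Proposition \ref{uusc}) gives $\widetilde{\Theta}(x_0) + \tfrac{\veps}{2} I \subset \widetilde{\Theta}(z_0)$, hence $D^2 w(x_0) + \tfrac{\veps}{2} I \in \widetilde{\Theta}(z_0)$, which is exactly what is needed. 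This yields the desired contradiction and proves \eqref{WTP_formula}.

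The main obstacle, and the only genuinely delicate point, is the change of variables $x \mapsto x + y$: one must be careful that the quadratic penalization term $\tfrac{\veps}{2}|x|^2$ in the definition of $u_{y;\veps}$ contributes precisely $\tfrac{\veps}{2} I$ to the Hessian of the composite test function at the shifted point (the linear and constant parts of expanding $\tfrac{\veps}{2}|x_0 + (x - x_0)|^2$ and of $w(x - y) = w((x-x_0) + (x_0 - y))$ get absorbed into the affine function $\widehat{a}$ and are irrelevant for $\SA$), and that the translate stays inside $\Omega$, which is guaranteed by the choice of $\Omega_\delta$. Everything else is a routine application of Remark \ref{use_TSH} together with Propositions \ref{uusc} and \ref{uuscd}. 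One should also remark that the dependence $\delta = \delta(\veps)$ is uniform in both $x_0$ and $y$, which is automatic because Proposition \ref{uusc} provides a $\delta$ depending only on $\veps$.
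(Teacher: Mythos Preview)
Your argument is essentially the paper's own proof run in contrapositive: the paper directly translates the test function, setting $\hat v_{y;\veps}(x)=v(x-y)+\tfrac{\veps}{2}|x-y|^2$, checks $D^2\hat v_{y;\veps}(x_0+y)=D^2v(x_0)+\veps I\in\widetilde\Theta(x_0)+\veps I\subset\widetilde\Theta(x_0+y)$, and then uses that subaffinity is translation invariant; you do the same change of variables after assuming failure. One small slip: the Hessian of $\tfrac{\veps}{2}|\cdot|^2$ is $\veps I$, not $\tfrac{\veps}{2}I$, so your composite test function has $D^2v(z_0)=D^2w(x_0)+\veps I$; this only helps (you can take $\delta=\delta(\veps)$ directly rather than $\delta(\veps/2)$, or alternatively your stated inclusion plus ellipticity of $\widetilde\Theta(z_0)$ still yields the needed $D^2w(x_0)+\veps I\in\widetilde\Theta(z_0)$).
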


\begin{proof} With $\veps, \delta$ as in Definition \ref{defn:uusc} one needs to show that for each $x_0 \in \Omega_{\delta}$ and $y \in B_{\delta}(0)$ fixed one has
\begin{equation}\label{wtp1}
\mbox{ $u_{y: \veps} + v \in \SA(x_0), \ \ \forall v \in C^2(\Omega)$ with $D^2v(x_0) \in \widetilde{\Theta}(x_0)$.}
\end{equation}
Defining the test function $\hat{v}_{y; \veps}$ by $\hat{v}_{y; \veps}(x) = v(x-y) + \frac{\veps}{2}|x-y|^2$ one has that
\begin{equation}\label{wtp2}
D^2 \hat{v}_{y; \veps}(x_0 + y) = D^2 v(x_0) + \veps I \in \widetilde{\Theta}(x_0) + \veps I \subset \widetilde{\Theta}(x_0 + y)
\end{equation}
by the uniform upper semicontinuity of $\widetilde{\Theta}$ since $x_0, x_0 + y \in \Omega$ with $| (x_0 + y) - x_0| < \delta$. Since $u \in \TSH(x_0 + y)$ and $\hat{v}_{y; \veps} \in C^2(\Omega)$ satisfies \eqref{wtp2}, one has
$$
    u + \hat{v}_{y; \veps}  \in \SA(x_0 + y)
$$
and hence
\begin{equation}\label{wtp3}
u( \cdot + y) + \hat{v}_{y; \veps}(\cdot + y)  \in \SA(x_0),
\end{equation}
since subaffinity is preserved by translations. The affirmation \eqref{wtp3} is precisely the needed relation \eqref{wtp1} by how $u_{y: \veps}$ and $\hat{v}_{y; \veps}$ are defined.
\end{proof}

\subsection{The subaffine theorem and the comparison principle}

Uniform upper semicontinuity of an elliptic map is a sufficient condition for the validity of the subaffine theorem, which will be proven in the following subsection.

\begin{thm}\label{thm:SAT} Let $\Theta$ be a uniformly upper semicontinuous elliptic map on $\Omega$. For each pair $u \in \TSH(\Omega)$ and $v \in \TSHD(\Omega)$, one has $u + v \in \SA(\Omega)$.
\end{thm}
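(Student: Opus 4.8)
The plan is to regularize $u$ and $v$ by sup-convolutions so as to reduce to the semi-convex case, in which second order information is available almost everywhere, and then to combine this information with the pointwise algebraic relation \eqref{sum_duals}. This is the scheme of Theorem 6.5 of \cite{HL09}; the one genuinely new point, due to $\Theta$ being non-constant, is that sup-convolution no longer preserves $\Theta$-subharmonicity exactly, and this is repaired by Proposition \ref{WTP}. Since subaffinity is a local property, it suffices to prove $u+v\in\SA(\Omega')$ for every $\Omega'\subset\subset\Omega$.

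Fix $\eta>0$ and, for $\lambda=\lambda(\eta)>0$ small and chosen increasing in $\eta$, let $u^\lambda$, $v^\lambda$ be the (localized) sup-convolutions of $u$, $v$; these are semi-convex, satisfy $u^\lambda\ge u$, $v^\lambda\ge v$, decrease pointwise to $u$, $v$ as $\lambda\to0$, and their defining suprema are realized at displacements of size $O(\sqrt\lambda)$. Let $\delta=\delta(\eta)$ be the radius furnished by Proposition \ref{WTP}, applied to both $\Theta$ and $\widetilde\Theta$ (legitimate, since $\widetilde\Theta$ is again a uniformly upper semicontinuous elliptic map by Proposition \ref{uuscd}). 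For $\lambda$ small enough that the relevant suprema range only over $y\in B_\delta(0)$, one has
\[
u^\lambda(x)+\tfrac{\eta}{2}|x|^2=\sup_{y\in B_\delta(0)}\Big[\,u_{y;\eta}(x)-\tfrac{1}{2\lambda}|y|^2\,\Big],\qquad u_{y;\eta}(x):=u(x+y)+\tfrac{\eta}{2}|x|^2,
\]
and each $u_{y;\eta}$ belongs to $\TSH(\Omega_\delta)$ by Proposition \ref{WTP}; subtracting the constant $\tfrac{1}{2\lambda}|y|^2$ keeps it there, and since the family is locally uniformly bounded above, Lemma \ref{limit_sup}(S) gives $u^\lambda+\tfrac{\eta}{2}|\cdot|^2\in\TSH(\Omega_\delta)$ (no regularization being needed, this function being continuous). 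The same computation with $\widetilde\Theta$ gives $v^\lambda+\tfrac{\eta}{2}|\cdot|^2\in\TSHD(\Omega_\delta)$. Being semi-convex, $u^\lambda$ and $v^\lambda$ are twice differentiable a.e., and at each such point $x$ the coherence property (Proposition \ref{coherence}) yields $D^2u^\lambda(x)+\eta I\in\Theta(x)$ and $D^2v^\lambda(x)+\eta I\in\widetilde\Theta(x)$. Hence by \eqref{sum_duals} and \eqref{nonnegative}, the semi-convex function $w_\eta:=u^\lambda+v^\lambda+\eta|\cdot|^2$ satisfies $\lambda_N\big(D^2w_\eta(x)\big)\ge0$, i.e.\ $D^2w_\eta(x)\in\widetilde{\mathcal{P}}$, for a.e.\ $x\in\Omega_\delta$.

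The key analytic input is the fact (Lemma \ref{SA_ae}, established in the following subsection) that a semi-convex function whose Hessian has non-negative largest eigenvalue almost everywhere is subaffine — the proof of which runs through Slodkowski's largest eigenvalue theorem, which here plays the role of Jensen's lemma in the classical viscosity approach. Granting it, $w_\eta\in\SA(\Omega_\delta)$ for every small $\eta$. Now fix $\Omega'\subset\subset\Omega$ and a sequence $\eta_n\downarrow0$; for $n$ large, $\Omega'\subset\Omega_{\delta(\eta_n)}$, so $w_{\eta_n}\in\SA(\Omega')$, while the monotone choice of $\lambda$ makes $\{w_{\eta_n}\}$ a decreasing sequence converging pointwise on $\Omega'$ to $u+v$. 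The decreasing-limit property Lemma \ref{limit_sup}(L), applied to the constant elliptic map $\widetilde{\mathcal{P}}$ (whose subharmonic functions are exactly the subaffine ones), then gives $u+v\in\SA(\Omega')$; as $\Omega'$ was arbitrary, $u+v\in\SA(\Omega)$.

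The main obstacle is the semi-convex case invoked above: turning the pointwise almost-everywhere sign condition $\lambda_N(D^2w_\eta)\ge0$ into the genuine comparison property \eqref{SACP} is precisely where Slodkowski's largest eigenvalue theorem does the work. By contrast, the accommodation of a non-constant $\Theta$ is essentially bookkeeping once Proposition \ref{WTP} is available — that proposition being the substitute for the translation invariance of a constant subequation exploited in \cite{HL09} — and the stability of the class $\TSH$ under sup-convolution and decreasing limits, as well as the localization of the sup-convolutions, are routine.
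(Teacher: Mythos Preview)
Your proof is correct and follows essentially the same three-step scheme as the paper: reduce to the semi-convex case via sup-convolution (using Proposition \ref{WTP} and Lemma \ref{limit_sup}(S) to handle the non-constant $\Theta$), invoke Lemma \ref{SA_ae} (Slodkowski) in the semi-convex case, and pass to the limit via Lemma \ref{limit_sup}(L) applied to the constant map $\widetilde{\mathcal{P}}$.

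One technical point you gloss over: your claim that the sup-convolution suprema are realized at displacements of size $O(\sqrt\lambda)$ requires $u$ and $v$ to be \emph{bounded}, not merely upper semicontinuous (if $|u|\le M$ then the displacement is at most $\sqrt{2\lambda M}$, but without a lower bound this fails and you cannot restrict to $y\in B_\delta(0)$ as needed for Proposition \ref{WTP}). The paper handles this explicitly: localization to $\Omega'\subset\subset\Omega$ gives an upper bound by upper semicontinuity, and then one replaces $u,v$ by $u_m=\max\{u,-m\}$, $v_m=\max\{v,-m\}$ (which remain in $\TSH$, $\TSHD$ by property (M)), runs your argument to get $u_m+v_m\in\SA$, and finally lets $m\to\infty$ via a second application of the decreasing-limit property. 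This is routine, but without it the step ``for $\lambda$ small enough the relevant suprema range only over $y\in B_\delta(0)$'' is not justified.
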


The subaffine theorem combined with the following weak maximum principle for subaffine functions yields the comparison principle for uniformly upper semicontinuous elliptic maps.

\begin{lem}\label{MPSA} Let $K \subset \R^N$ be compact and $u \in \USC(K)$. If $u \in \SA(K^{\circ})$ then
$$
    \sup_K u \leq \sup_{\partial K} u.
$$
\end{lem}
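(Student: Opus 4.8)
The plan is to argue by contradiction, exploiting that the superlevel sets of an upper semicontinuous function on a compact set are again compact. The one genuine subtlety is that $K$ is in general \emph{not} a compact subset of its interior $K^{\circ}$, so the defining property of $\SA(K^{\circ})$ in Definition \ref{SA_defn} cannot be applied to $K$ itself; one circumvents this by applying it to a slightly enlarged superlevel set of $u$, on whose boundary $u$ is strictly below the chosen level, so that a \emph{constant} affine function is an admissible comparison function there.

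First I would suppose, for contradiction, that $m := \sup_{\partial K} u < \sup_K u$ (if $\sup_K u = -\infty$ there is nothing to prove). Since $u \in \USC(K)$ with $K$ compact, $u$ attains its supremum over $K$ at some $x_0$, and since $u \le m < \sup_K u$ on $\partial K$ this maximizer must satisfy $x_0 \in K^{\circ}$. Fix a level $c$ with $m < c < u(x_0)$ and set $E := \{ x \in K : u(x) \ge c \}$. Upper semicontinuity makes $E$ closed, hence compact; the inequality $u \le m < c$ on $\partial K$ forces $E \cap \partial K = \emptyset$, so that $E \subset K^{\circ}$; and $x_0 \in E$.

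Since $E$ is a compact subset of the open set $K^{\circ}$, I would then choose $\rho > 0$ with $C := \overline{N_{\rho}(E)} \subset K^{\circ}$, a compact set containing $E$. Each point of $\partial C$ is at distance exactly $\rho > 0$ from $E$, hence lies outside $E$, so $u < c$ on $\partial C$; it is essential here that $E$ is the \emph{full} superlevel set $\{ u \ge c \}$ inside $K$, since the topological boundary of $E$ itself could meet $\{ u \ge c \}$ by upper semicontinuity. Applying the subaffinity of $u$ on $K^{\circ}$ to the compact set $C$ and the constant affine function $a \equiv c$, the implication $(u \le c \text{ on } \partial C) \Rightarrow (u \le c \text{ on } C)$ gives $u(x_0) \le c$, contradicting $u(x_0) > c$.

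The step I expect to be the crux is this last reduction: one must not try to apply the definition of subaffinity to $K$ directly, and the enlargement of $E$ is exactly what guarantees that its new boundary avoids $\{ u \ge c \}$ and thus legitimizes the comparison with the constant $c$. The remaining verifications --- compactness of $E$ and of $C$, the inclusion $C \subset K^{\circ}$, and the fact that $\partial C \subset \{ x : \mathrm{dist}(x, E) = \rho \}$ --- are elementary point-set topology in $\R^{N}$.
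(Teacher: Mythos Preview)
Your proof is correct. The paper does not give its own proof of this lemma, deferring instead to Proposition 2.3 of \cite{HL09} with the one-line remark that the idea is to exhaust $K^{\circ}$ by compact sets on which the maximum principle holds; your argument, which uses a single well-chosen compact set $C$ (the $\rho$-enlargement of the superlevel set $E = \{u \ge c\}$), is a clean and direct implementation of the same underlying idea and avoids the need for an exhaustion altogether.
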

For a proof of Lemma \ref{MPSA} see Proposition 2.3 of \cite{HL09}. We mention only that the idea is to exhaust $K^{\circ}$ by compact sets on which the maximum principle holds.

The desired comparison principle is a direct corollary of Theorem \ref{thm:SAT} and Lemma \ref{MPSA}.

\begin{thm}\label{thm:CP} Let $\Theta$ be a uniformly upper semicontinuous elliptic map on $\Omega$. Then the comparison principle holds; that is,
if $u \in \USC(\overline{\Omega})$ and $w \in \LSC(\overline{\Omega})$ are $\Theta$-subharmonic and $\Theta$-superharmonic respectively in $\Omega$, then
$$
\mbox{$u \leq w$ on $\partial \Omega \ \ \Rightarrow \ \ u \leq w$ in $\Omega$.}
$$
\end{thm}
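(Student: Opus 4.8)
The plan is to deduce the comparison principle directly from the subaffine theorem (Theorem~\ref{thm:SAT}) together with the weak maximum principle for subaffine functions (Lemma~\ref{MPSA}); since $F$ carries no explicit dependence on $u$ or $Du$, no doubling of variables is needed and the argument is essentially a one-line unwinding of definitions, with all the real work already absorbed into Theorem~\ref{thm:SAT}.

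First I would set $v := -w$ and observe that, by Definition~\ref{def:Weak_SH}(b), the hypothesis that $w$ is $\Theta$-superharmonic in $\Omega$ is precisely the statement $v \in \TSHD(\Omega)$. Combining this with the hypothesis $u \in \TSH(\Omega)$ and the assumed uniform upper semicontinuity of the elliptic map $\Theta$, Theorem~\ref{thm:SAT} applies and yields
$$
u - w = u + v \in \SA(\Omega).
$$
Next I would check the semicontinuity of $u - w$: one has $u \in \USC(\overline{\Omega})$ by hypothesis, $-w \in \USC(\overline{\Omega})$ because $w \in \LSC(\overline{\Omega})$, and the sum of two upper semicontinuous functions is upper semicontinuous, with no $\infty - \infty$ ambiguity since $u < +\infty$ and $-w < +\infty$. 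Hence $u - w \in \USC(\overline{\Omega})$.

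Finally I would apply Lemma~\ref{MPSA} with the compact set $K := \overline{\Omega}$, which is compact because $\Omega \subset\subset \R^N$ and satisfies $K^{\circ} = \Omega$ since $\partial\Omega$ is $C^2$. From $u - w \in \USC(K)$ and $u - w \in \SA(K^{\circ}) = \SA(\Omega)$, Lemma~\ref{MPSA} gives
$$
\sup_{\overline{\Omega}} (u - w) \le \sup_{\partial\Omega} (u - w) \le 0,
$$
the last inequality being the hypothesis $u \le w$ on $\partial\Omega$. Therefore $u \le w$ on all of $\overline{\Omega}$, in particular in $\Omega$, which is the desired conclusion. There is no genuine obstacle in this step; the only point that merits a moment of care is verifying that subaffinity on the open set $\Omega$ is exactly what Lemma~\ref{MPSA} needs in order to run the maximum principle on $\overline{\Omega}$, which is fine because $K^{\circ} = \Omega$ for a $C^2$ domain (for a general bounded open set one would instead exhaust $\Omega$ by compacta and pass to the limit, exactly as in the proof of Lemma~\ref{MPSA} itself).
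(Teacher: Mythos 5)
Your proof is correct and takes exactly the same route as the paper: set $v := -w$, invoke Theorem~\ref{thm:SAT} to conclude $u + v \in \SA(\Omega)$, and apply Lemma~\ref{MPSA}. The paper's version is terser and leaves implicit the semicontinuity of $u - w$ and the fact that $(\overline{\Omega})^{\circ} = \Omega$, both of which you verify correctly.
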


\begin{proof}
By setting $v := -w$ one has $v \in \USC(\overline{\Omega})$ and $v \in \TSHD(\Omega)$. Since $\Theta$ is uniformly upper semicontinuous, Theorem \ref{thm:SAT} yields $u + v \in \SA(\Omega)$ and hence Lemma \ref{MPSA} gives the desired result.
\end{proof}

We remark that the comparison principle has been stated for (abstract) uniformly upper semicontinuous maps, which when combined with the equivalence of Proposition \ref{SHCVS} yields a comparison principle for $\Phi$-admissible viscosity solutions of the nonlinear PDE \eqref{FNE} when $F$ admits a uniformly upper semicontinuous branch function $\Theta$ for which the non-degeneracy condition \eqref{NDC} holds. 

\subsection{Proof of the subaffine theorem}

We now justify the subaffine theorem for uniformly upper semicontinuous elliptic maps, which we will divide into three steps. We first recall that if $\lambda > 0$, a function $u : \Omega \to \R$ is said to be {\em $\lambda$-semi-convex} if $u + \lambda Q_0$ is a convex function, where $Q_0(x) = \frac{1}{2}|x|^2$. Since elliptic subsets are subsets of the symmetric matrices $\Ss(N)$, in all that follows, the Hessian $D^2 u(x_0)$ of a twice differentiable $u$ should be replaced by $\frac{1}{2} [D^2 u(x_0) + D^2 u(x_0)^{T}]$ if the Hessian fails to be symmetric.

\vspace{2ex}

\noindent {\bf Step 1:} {\em The subaffine theorem holds for $\lambda$-semi-convex functions; that is, if $u \in \TSH(\Omega)$ and $v \in \TSHD(\Omega)$ are $\lambda$-semi-convex then $u + v \in \SA(\Omega)$.}

By Alexandroff's theorem, $u$ and $v$ are twice differentiable a.e.\ with respect to Lebesgue measure on $\Omega$ and by the coherence property Proposition \ref{coherence} one has
$$
    \mbox{$D^2 u(x_0) \in \Theta(x_0)$ \quad and \quad $D^2 v(x_0) \in \widetilde{\Theta}(x_0)$ \ \ for a.e.\ $x_0 \in \Omega$,}
$$
Making use of the property \eqref{sum_duals} and the definition of $\lambda$-semi-convexity one has
\begin{equation}\label{sat2}
\mbox{$u + v$ is $2 \lambda$-semi-convex \quad and \quad $u + v \in \widetilde{\mathcal{P}}(x_0)$ \ \ for a.e.\ $x_0 \in \Omega$.}
\end{equation}
By Proposition 4.5 of \cite{HL09}, one has the following fact: if $u \in \USC(\Omega)$ then
\begin{equation}\label{sat3}
    u \in \SA(\Omega) \ \Leftrightarrow \ u \in \PSHD(\Omega).
\end{equation}
Combining \eqref{sat2} and \eqref{sat3} the desired conclusion follows from the following key fact applied to $w = u + v$.

\begin{lem}\label{SA_ae}
Let $w \in \USC(\Omega)$ be $2\lambda$-semi-convex on $\Omega$. If $D^2 w(x) \in \widetilde{\mathcal{P}}$ for a.e.\ $x \in \Omega$, then $w \in \PSHD(\Omega)$
\end{lem}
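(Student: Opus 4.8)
The plan is to reduce the statement to a pointwise property of $w$ at an interior maximum of $w-a$ and then apply Slodkowski's largest eigenvalue theorem to exploit the semi-convexity. Recall that $w \in \PSHD(\Omega)$ means that $w + Q \in \SA(\Omega)$ for every $Q$ that is $\widetilde{\mathcal{P}}$-subharmonic, i.e. every quadratic (or $C^2$) function whose Hessian lies in $\mathcal{P}$; equivalently, by Lemma \ref{lemma:nonSA}, $w \in \PSHD(\Omega)$ fails exactly when there is a point $x_0$, a triple $(\veps, r, a)$ and a $C^2$ function $Q$ with $D^2Q(x_0) \geq 0$ such that $(w + Q - a)(x_0) = 0$ and $(w + Q - a)(x) \leq -\veps|x-x_0|^2$ on $B_r(x_0)$. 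So I would argue by contradiction: assume such $x_0$, $(\veps,r,a)$ and $Q$ exist. After the harmless reduction of absorbing $Q-a$ and a quadratic penalization into a single $2\lambda$-semi-convex function (the sum of a $2\lambda$-semi-convex function and a smooth one is still semi-convex, and $-\veps|x-x_0|^2$ only helps), I may assume $w$ itself has a strict interior maximum at $x_0$ over $\overline{B}_r(x_0)$, $w$ is $\mu$-semi-convex for some $\mu>0$, and still $D^2w(x)\in\widetilde{\mathcal P}$ a.e., i.e. $\lambda_N(D^2w(x))\geq 0$ a.e.

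The key tool is Slodkowski's largest eigenvalue theorem: if $w$ is semi-convex on a neighborhood of a point where $w-\langle p,\cdot\rangle$ has a local maximum, then for a.e. nearby point $y$ at which $w$ is twice differentiable, the largest eigenvalue $\lambda_N(D^2w(y))$ is controlled from below in a way that forces, upon letting $y$ approach the maximum point, $\lambda_N$ of the ``upper Hessian'' at $x_0$ to be $\leq 0$. More precisely, semi-convexity guarantees that the set of twice-differentiability points has full measure and that $w$ is punctually second-order differentiable in the Alexandrov sense; Slodkowski's theorem then yields a sequence $y_k \to x_0$ of twice-differentiability points with $\lambda_N(D^2 w(y_k)) \le \veps_k \to 0$ — in fact, because $x_0$ is a strict \emph{maximum} of $w$ (not merely $w-a$ affine) after our reductions, one gets $\limsup_k \lambda_N(D^2w(y_k)) \leq 0$, while the negative definite correction from the strict maximum pushes this strictly below zero. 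But $D^2 w(y_k)\in\widetilde{\mathcal P}$ for a.e.\ such $y_k$ means $\lambda_N(D^2w(y_k))\geq 0$, and passing to the limit gives the contradiction.

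The main obstacle is the correct invocation of Slodkowski's theorem and the bookkeeping of the reduction: one must arrange that $w$ (or $w$ plus smooth data) genuinely has an interior \emph{maximum}, not just that $w$ minus an affine function does, so that the largest-eigenvalue estimate produces points with $\lambda_N(D^2w)<0$ rather than merely $\leq 0$; the strict quadratic bound $-\veps|x-x_0|^2$ in the failure of subaffinity is exactly what supplies this strictness, but it has to be tracked carefully through the absorption of $Q-a$. A secondary technical point is verifying that $D^2w(x)\in\widetilde{\mathcal P}$ a.e.\ is preserved under adding a smooth function $Q$ with $D^2Q\geq 0$: since $\widetilde{\mathcal P}=\{A:\lambda_N(A)\geq 0\}$ and $\lambda_N(A+P)\geq\lambda_N(A)$ for $P\geq 0$, this is immediate, but it is what links the $\PSHD$ formulation to the hypothesis. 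Once these are in place, the conclusion $w\in\PSHD(\Omega)$, and hence via \eqref{sat3} that $w=u+v\in\SA(\Omega)$, follows.
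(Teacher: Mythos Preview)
Your overall strategy---contradict the a.e.\ hypothesis by producing, near a strict interior maximum, points where $\lambda_N(D^2w)<0$---is sound, but the tool you invoke is misquoted, and this leaves a real gap. What you describe as ``Slodkowski's largest eigenvalue theorem'' (a sequence $y_k\to x_0$ of twice-differentiability points with $\lambda_N(D^2w(y_k))\le\veps_k\to 0$) is not Slodkowski's theorem; it is closer to Jensen's lemma, and even Jensen's lemma does not hand you a sequence converging to $x_0$. What Jensen's lemma gives is that the set of points near $x_0$ where some affine perturbation of $w+\veps|\cdot-x_0|^2$ has a local maximum has \emph{positive Lebesgue measure}; at twice-differentiability points of that set one gets $D^2w(y)+2\veps I\le 0$, hence $\lambda_N(D^2w(y))\le -2\veps<0$. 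The contradiction with $\lambda_N(D^2w)\ge 0$ a.e.\ then comes from the elementary fact that a set of positive measure meets every set of full measure. Your write-up never states this measure-theoretic step, and your ``passing to the limit'' formulation sidesteps it; without it the argument is incomplete. (A minor point: since $\SA(\Omega)=\PSHD(\Omega)$ by \eqref{sat3}, the extra test function $Q$ with $D^2Q\ge 0$ is unnecessary; you may assume $Q\equiv 0$ from the outset.)

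The paper takes a different and somewhat cleaner route. Rather than Jensen's lemma, it uses Slodkowski's theorem in its actual form: for a \emph{convex} function $v$, the inequality $K(v,x)\ge\Lambda$ a.e.\ implies $K(v,x)\ge\Lambda$ everywhere, where $K$ is the generalized largest-eigenvalue function \eqref{slod2}. The trick is to set $w_{\Lambda;x_0}:=w+\Lambda Q_{x_0}$ with $\Lambda\ge 2\lambda$, which is genuinely convex; the hypothesis $D^2w\in\widetilde{\mathcal P}$ a.e.\ translates into $K(w_{\Lambda;x_0},\cdot)\ge\Lambda$ a.e., and Slodkowski upgrades this to hold at every point, in particular at $x_0$. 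But the strict-maximum data $(w-a)(x)\le -\frac{\veps}{2}|x-x_0|^2$ lets one compute $K(w_{\Lambda;x_0},x_0)$ directly from the definition and bound it by $\Lambda-\veps$, which is the contradiction. This avoids any positive-measure bookkeeping and evaluates the obstruction at the single point $x_0$; your approach, once repaired via Jensen's lemma, is equivalent in strength (the paper itself notes the Slodkowski--Jensen equivalence) but requires the extra measure argument you omitted.
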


This Lemma is Theorem 7.3 of \cite{HL09}. Given its crucial role, we will reproduce the proof below after a few preparatory remarks. Recalling that the constant elliptic map  $\widetilde{\mathcal{P}}$ is characterized by \eqref{nonnegative}, one needs to pass a lower bound on the largest eigenvalue from a subset of full measure to the entire domain $\Omega$. This is accomplished by using Slodkowski's largest eigenvalue theorem (see Corollary 3.5 of \cite{Sl84}): {\em if $v$ is a convex function on $\Omega$ and $\Lambda \geq 0$ then} 
\begin{equation}\label{slod1}
\mbox{$K(v,x) \geq \Lambda$ for a.e.\ $x \in \Omega \ \Rightarrow \ K(v,x) \geq \Lambda$ for every $x \in \Omega$,}
\end{equation}
where the function $K$ is defined by
\begin{equation}\label{slod2}
K(v,x) := \limsup_{\veps \to 0} 2 \veps^{-2} \sup_{|y| = 1} \left\{ v(x + \veps y) - v(x) - \veps \langle Dv(x), y \rangle \right\}
\end{equation}
at points $x$ where $v$ is differentiable and $K(v,x) := + \infty$ otherwise. For $v$ twice differentiable in $x$ one has
$$
    K(v,x) = \lambda_N(D^2 v(x)),
$$
the largest eigenvalue of the (symmetrized) Hessian of $v$ in $x$. We note that Slodkowski's theorem plays the role of {\em Jensen's lemma} in classical approaches to comparison theorems for viscosity solutions (see Lemma 3.10 of Jensen \cite{Je88} and Lemma A.3 of \cite{CrIsLiPL92}). In fact, these two results are in some sense equivalent as described in Harvey-Lawson \cite{HL13}.

\begin{proof}[Proof of Lemma \ref{SA_ae}]

For $\Lambda \geq 0$ and $x_0 \in \Omega$, set $w_{\Lambda;x_0} := w + \Lambda Q_{x_0}$ with $Q_{x_0}$ defined by \eqref{Qx0}. At almost every $x \in \Omega$, $w$ is twice differentiable and one has
$$
    D^2 w(x) \in \widetilde{\mathcal{P}} \ \Leftrightarrow \  D^2 w_{\Lambda;x_0}(x) \in \widetilde{\mathcal{P}} + \Lambda I \ \Leftrightarrow \ K(w_{\Lambda;x_0}, x) \geq \Lambda.
$$
Using Slodkowski's theorem \eqref{slod1} one concludes that for each $\Lambda \geq 2 \lambda$ and each $x_0 \in \Omega$
\begin{equation}\label{sat5}
     \mbox{$K(w_{\Lambda;x_0}, x) \geq \Lambda$ for every $x \in \Omega$.}
\end{equation}

Assume, by contradiction that $w \not\in \PSHD(\Omega)$. Hence there exists $x_0 \in \Omega$ and a triple $(\veps, r, a)$ such that
\begin{equation}\label{sat6}
     \mbox{$(w - a)(x_0) = 0$ \quad and \quad $(w - a)(x) \leq - \frac{\veps}{2} |x - x_0|^2, \ \ \forall \ x \in B_r(x_0)$.}
\end{equation}
Since $w - a$ is $2\lambda$-semi-convex and has a local maximum in $x_0$, one knows that $w-a$ is differentiable in $x_0$ and hence $w$ is differentiable in $x_0$ and $Dw(x_0) = Da(x_0)$ and hence from \eqref{sat6} one has also
\begin{equation}\label{sat7}
     a(x) = w(x_0) + \langle Dw(x_0), x - x_0 \rangle.
\end{equation}
Hence $w_{\Lambda;x_0}$ is differentiable in $x_0$ for each $\Lambda \geq 2 \lambda$ and therefore $K(w_{\Lambda;x_0}, x_0)$ is defined by \eqref{slod2} with
$$
    K(w_{\Lambda;x_0}, x_0) = \limsup_{\veps \to 0} 2 \veps^{-2} \sup_{|y| = 1} \left\{ w(x_0 + \veps y) - a(x_0 + \veps y) + \frac{\Lambda}{2} |\veps y|^2 \right\}.
$$
by using \eqref{sat7}. However, by \eqref{sat6} one then has
$$
 K(w_{\Lambda;x_0}, x_0) \leq \limsup_{\veps \to 0} 2 \veps^{-2} \sup_{|y| = 1} \left\{ \frac{\Lambda - \veps}{2} |\veps y|^2 \right\} = \Lambda - \veps,
$$
which contradicts \eqref{sat5}.
\end{proof}
This completes Step 1.

\vspace{2ex}

\noindent {\bf Step 2:} {\em Approximation by the sup-convolution and uniform upper semicontinuity of the elliptic map.}

In order to pass from the special case of $u$ and $v$ semi-convex to the general case of $u$ and $v$ upper semicontinuous, we will make use of the standard device of the sup-convolution and exploit the uniform upper semicontinuity of the elliptic maps to pass to the limit. We first recall the definition of the approximation. Given $u$ an upper semi-continuous function on a bounded domain $\Omega$ satisfying $|u| \le M$ for some $M \ge 0$ and given $\veps > 0$, the {\em sup-convolution} $u^{\veps}$ is defined by
\begin{equation}\label{defn:sup_conv}
u^\veps(x) = \sup_{z \in \R^N} \left\{ u(x-z) - \frac{1}{\veps}|z|^2 \right\} \quad \forall x \in \Omega,
\end{equation}
where one extends $u$ to be $-\infty$ outside of $\Omega$. The function defined in \eqref{defn:sup_conv} satisfies the following well-known properties (cf.\ Theorem 8.2 of \cite{HL09}, for example):
$$
    \mbox{$u^\veps$ decreases to $u$ as $\veps \rightarrow 0$}
$$
and
$$
    \mbox{$u^\veps$ is $\frac{1}{\veps}$-semi-convex.}
$$
The following lemma says that while the sup-convolution approximation $u^\veps$ of a $\Theta$-subharmonic function $u$ may not be $\Theta$-subharmonic, if $\Theta$ is uniformly upper semicontinuous, then an arbitrarily small quadratic perturbation of $u^\veps$ is $\Theta$-subharmonic.

\begin{lem}\label{lem:sup_conv} If $\Theta$ is a uniformly upper semicontinuous elliptic map on $\overline{\Omega}$ and if $u \in \TSH(\Omega)$ with $|u| \le M$ on $\Omega$ then for every $\eta > 0$ there exists $\overline{\veps} = \overline{\veps}(\eta) > 0$ such that
$$
    u^\veps( \cdot) + \eta | \cdot |^2 \in \TSH(\Omega_{\delta}), \ \ \forall \ \veps \in (0, \overline{\veps}(\eta)],
$$
where $\Omega_{\delta}$ is defined by \eqref{Omega_delta} and $\delta = \sqrt{2 \veps M}$.
\end{lem}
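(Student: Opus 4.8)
The plan is to exploit the well-known fact that the sup-convolution $u^\veps$ is a ``downward push'' of $u$ by a controlled amount, so that near any point $x_0$ its Hessian behavior is inherited from $u$ at a nearby point $z_0$ with $|z_0 - x_0| \le \sqrt{2\veps M}$ (this is the standard magnitude estimate for the location of the maximizer in \eqref{defn:sup_conv}, using $|u| \le M$). I would argue by contradiction via Remark \ref{use_TSH}: suppose $u^\veps(\cdot) + \eta|\cdot|^2 \notin \TSH(x_0)$ for some $x_0 \in \Omega_\delta$. Then there is a test function $v \in C^2(\Omega)$ with $D^2 v(x_0) \in [\widetilde{\Theta}(x_0)]^{\circ}$ and a triple $(\veps', r, a)$ realizing the failure of subaffinity, i.e.\ $(u^\veps + \eta Q - a + v)$ has a strict interior maximum at $x_0$ with a quadratic downward opening, where $Q(\cdot)=|\cdot|^2$. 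Absorbing the $\eta|\cdot|^2$ term and the strict-maximum quadratic into the test function, one gets a $C^2$ function $\psi$ touching $u^\veps$ from above at $x_0$ with $D^2\psi(x_0) \in -[\Theta(x_0)]^{\circ} - 2\eta I$ roughly speaking (more precisely, after shifting, $-D^2\psi(x_0) - cI \notin \Theta(x_0)$ for a small margin $c$ governed by $\eta$).

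The key step is then the translation of this touching property to $u$ itself. If $\psi$ touches $u^\veps$ from above at $x_0$, and $z_0 \in \R^N$ realizes the supremum defining $u^\veps(x_0)$, one checks that $\tilde\psi(x) := \psi(x + (x_0 - z_0)) + \tfrac{1}{\veps}|x_0 - z_0|^2$ touches $u$ from above at $z_0$, with $D^2 \tilde\psi(z_0) = D^2\psi(x_0 + (x_0 - z_0))$; and since $z_0 \in \Omega$ (because $x_0 \in \Omega_\delta$ with $\delta = \sqrt{2\veps M}$ and $|x_0 - z_0| \le \sqrt{2\veps M}$), one may invoke $u \in \TSH(z_0)$. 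By the contrapositive direction of the characterization in Remark \ref{use_TSH}, touching $u$ from above at $z_0$ with a strict quadratic opening forces $D^2 \tilde\psi(z_0) \notin -[\Theta(z_0)]^\circ$, i.e.\ $-D^2\psi(x_0 + (x_0-z_0)) \in \Theta(z_0)$. Now $x_0 + (x_0 - z_0)$ and $z_0$ are both in $\Omega$ at distance $\le 2\sqrt{2\veps M}$ apart (after shrinking $\delta$ by a harmless factor, or re-centering the estimate), so uniform upper semicontinuity of $\Theta$ (Proposition \ref{uusc}) gives $-D^2\psi(x_0+(x_0-z_0)) + \beta I \in \Theta(z_0+\cdots)$ for a modulus $\beta = \beta(\veps) \to 0$; combined with $C^2$-continuity of $D^2\psi$ and the fact that $-D^2\psi(x_0)$ was strictly outside $\Theta(x_0)$ by a margin depending only on $\eta$, this is a contradiction once $\veps$ is small enough that $\beta(\veps)$ plus the $\psi$-oscillation over a ball of radius $2\sqrt{2\veps M}$ is less than that fixed margin. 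This is exactly where $\overline{\veps}(\eta)$ is chosen.

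The main obstacle is bookkeeping the two independent small scales: the fixed margin $\sim \eta$ coming from the quadratic perturbation we added, versus the shrinking error terms (the sup-convolution displacement $\sqrt{2\veps M}$ and the uniform-upper-semicontinuity modulus $\delta(\cdot)^{-1}$-type quantity) that go to zero with $\veps$ but are \emph{not} a priori comparable to $\eta$. One must pick $\overline\veps(\eta)$ \emph{after} $\eta$ is fixed so that all $\veps$-dependent errors are dominated by the $\eta$-margin; the statement is phrased precisely to allow this order of quantifiers. A secondary technical point is ensuring the maximizer $z_0$ lands inside $\Omega$ (so that $\TSH(z_0)$ is meaningful), which is the reason the conclusion is stated on the shrunken domain $\Omega_\delta$ with $\delta = \sqrt{2\veps M}$; one should also note $z_0$ exists by upper semicontinuity of $u$ and the coercive penalization in \eqref{defn:sup_conv}. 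None of these steps requires anything beyond the coherence-type characterization of $\TSH(x_0)$ recorded in Remark \ref{use_TSH}, the elementary displacement estimate for sup-convolutions, and Proposition \ref{uusc}.
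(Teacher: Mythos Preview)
Your approach is essentially correct and is a genuinely different route from the paper's. The paper does \emph{not} argue by direct test-function transfer; instead it observes that, by Proposition~\ref{WTP}, each translate $u(\cdot - z) + \eta|\cdot|^2$ with $|z| < \delta(\eta)$ lies in $\TSH(\Omega_\delta)$, then adds the constant $-\tfrac{1}{\veps}|z|^2$ (property~(A)), takes the supremum over this family (property~(S)), and notes that the resulting upper envelope is exactly $u^\veps + \eta|\cdot|^2$ provided $\veps \le \delta^2/(2M)$ so that the maximizer stays in the ball $|z|<\delta$; semi-convexity then ensures the envelope coincides with its upper semicontinuous regularization. This is modular---it simply assembles already-proved lemmas---whereas your direct argument (the standard viscosity approach: touch $u^\veps$ from above at $x_0$, pass to a touching of $u$ at the maximizer $z_0$, then use $u\in\TSH(z_0)$ and the uniform upper semicontinuity of $\Theta$ to close with the $\eta$-margin) is more self-contained and avoids invoking~(S).

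One computational slip in your write-up: since $\tilde\psi(x)=\psi(x+(x_0-z_0))+\text{const}$, one has $D^2\tilde\psi(z_0)=D^2\psi(z_0+(x_0-z_0))=D^2\psi(x_0)$, not $D^2\psi(x_0+(x_0-z_0))$. This actually \emph{simplifies} your argument: no appeal to $C^2$-continuity of $D^2\psi$ is needed, nor any concern about whether $x_0+(x_0-z_0)$ lies in $\Omega$. The contradiction is then immediate: from $u\in\TSH(z_0)$ one obtains $D^2\psi(x_0)=-D^2v(x_0)-2\eta I\in\Theta(z_0)$; the uniform upper semicontinuity gives $\Theta(z_0)+\eta I\subset\Theta(x_0)$ once $|x_0-z_0|\le\sqrt{2\veps M}<\delta(\eta)$, hence $-D^2v(x_0)-\eta I\in\Theta(x_0)$, and ellipticity yields $-D^2v(x_0)\in\Theta(x_0)$, contradicting $D^2v(x_0)\in[\widetilde\Theta(x_0)]^\circ=-[\Theta(x_0)]^c$. (Your informal description also has a harmless sign reversal in the line ``$D^2\tilde\psi(z_0)\notin -[\Theta(z_0)]^\circ$''; the conclusion should read $D^2\tilde\psi(z_0)\in\Theta(z_0)^\circ$.)
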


\begin{proof} For each $\eta > 0$ there exists $\delta = \delta(\eta, u) > 0$ such that
\begin{equation}\label{sc1}
 u(\cdot - z) + \eta | \cdot |^2 \in \TSH(\Omega_{\delta}), \ \ \forall \ z \in B_{\delta}(0),
\end{equation}
since $\Theta$ is uniformly upper semicontinuous and hence one can apply Proposition \ref{WTP}. Consider the following family of functions on $\Omega_\delta$
$$
 \mathcal{F}_z = \left\{ u(\cdot-z) - \frac{1}{\veps}|z|^2 + \eta |\cdot|^2 : |z| < \delta \right\}.
$$
One has $\mathcal{F}_z \subset \Theta \mathrm{SH}(\Omega_\delta)$ by \eqref{sc1} and the affine property (A) of Lemma \ref{max_affine}. By the sup property (S) of Lemma \ref{limit_sup},
$$
    u^\veps + \eta | \cdot |^2 := \sup_{w \in \mathcal{F}_z} w
$$
admits an upper semicontinuous regularization which is $\Theta$-subharmonic on $\Omega_\delta$ if $\veps \le \frac{\delta^2}{2M}$.
In addition, $u^\veps + \eta | \cdot|^2$ is also continuous since it is quasi-convex and hence agrees with its regularization.
\end{proof}

\vspace{2ex}

\noindent {\bf Step 3:} {\em Proof of the subaffine theorem in the general case.}

Since the result is local, we can assume that $u \in \TSH(\Omega)$ and $v \in \TSHD(\Omega)$ are bounded from above by using their upper semicontinuity.  We may also assume that they are bounded from below, by replacing $u$ and $v$ with $u_m = \max \{ u, -m \}$ and $v_m = \max \{ v, -m \}$, if need be, and then taking the limit of $u_m + v_m$ as $m \rightarrow +\infty$. In fact, if $\{ w_m \}_{m \in \N} \subset \USC(\Omega) \cap \SA(\Omega)$ has a decreasing limit $w_m \searrow w$ in $\Omega$, then $w \in \USC(\Omega) \cap \SA(\Omega)$ where the $\USC(\Omega)$ claim is standard and the $\SA(\Omega)$ claim comes from the decreasing limits property (L) of Lemma \ref{limit_sup} applied to the constant elliptic map $\Theta = \widetilde{\mathcal{P}}$ and the fact that $w \in \SA(\Omega)$ if and only if $w \in \widetilde{\mathcal{P}} \mathrm{SH} (\Omega)$ as noted in \eqref{sat3}.

We consider  an arbitrary decreasing sequence $\{\eta_j\}$ of positive real numbers such that $\eta_j \rightarrow 0$.  Let $u_j = u^{\veps_j} + \eta_j | \cdot| ^ 2$ be given as in Lemma \ref{lem:sup_conv} above and take $\veps_j = \min\{\eta_j, \bar{\veps}(\eta_j)\}$.
We have that $u_j \in \Theta(\Omega_{\delta_j})$ where $\delta_j = \sqrt{2 \veps_j M}$ and that $u_j$ is semi-convex. Moreover, the sequence $u_j$ decreases to $u$ as $j \rightarrow +\infty$. We may construct a sequence $v_j$ in the same way. By the subaffine theorem for semi-convex functions of Step 1, we have $u_j + v_j \in \SA(\Omega_{\delta_j})$.
Since $u_j + v_j \searrow u + v$ and $\Omega_{\delta_j}$ exhausts $\Omega$ as $\delta_j \searrow 0$ decreases, we obtain $u + v \in \SA(\Omega)$ by property the decreasing limit property (L) of Lemma \ref{limit_sup}.

This completes Step 3 and hence the proof of Theorem \ref{thm:SAT}. \hfill $\square$

\section{Elliptic cones and boundary convexity}\label{bdy_convexity}

In this section, our main aim is to clarify what will be an admissible domain for the Dirichlet problem for weakly $\Theta$-harmonic functions. Again following the path laid down by Harvey-Lawson \cite{HL09} in the case of elliptic sets (constant elliptic maps), what is needed is a suitable notion of strict convexity of $\partial \Omega$ with respect to $\Theta$. In all that follows, $\Omega$ be a bounded domain in $\R^N$ with $C^2$ boundary; that is, for each $x \in \partial \Omega$ there exists $\rho \in C^2(B_r(x))$ for some $r > 0$ such that
\begin{equation}\label{LDF}
\mbox{$\Omega \cap B_r(x) = \{ y \in B_r(x): \rho(y) < 0 \}$ and $D \rho \neq 0$ on $B_r(x)$.}
\end{equation}
The convexity notion will be formulated in terms of the following structure.
\begin{defn}\label{defn:ell_cone} An {\em elliptic cone} \footnote{Elliptic cones are called {\em Dirichlet ray sets} in \cite{HL09}.}  $\EC{\Phi}$ is an elliptic subset which is also a pointed cone; that is $\EC{\Phi} \subset \Ss(N)$ is closed, non empty, proper and satisfies $ \EC{\Phi} + \mathcal{P} \subset \EC{\Phi}$ as well as
\begin{equation}\label{EC1}
 A \in \EC{\Phi} \Rightarrow tA \in \EC{\Phi} \quad \text{for all $t \ge 0$}.
 \end{equation}
\end{defn}
Since the cone $\EC{\Phi}$ is also an elliptic subset, the property \eqref{EC1} implies
\begin{equation}\label{EC2}
 A \in \EC{\Phi}^{\circ} \Rightarrow tA \in \EC{\Phi}^{\circ} \quad \text{for all $t > 0$}.
\end{equation}
\begin{defn}\label{bdy_convexity}
Let $\EC{\Phi}$ be an elliptic cone. The boundary $\partial \Omega$ will be said \emph{strictly $\EC{\Phi}$-convex in $x \in \partial \Omega$} if there exists a local defining function $\rho$ for $\partial \Omega$ in $x$ such that
$$
    D^2 \rho(x) |_{T_x \partial \Omega} = B|_{T_x \partial \Omega} \quad \text{for some $B \in \EC{\Phi}^{\circ}$},
$$
where $T_x \partial \Omega$ is the tangent space to $\partial \Omega$ at $x$.
\end{defn}
As shown in Lemma 5.2 of \cite{HL09}, this convexity notion\footnote{In pace of a local defining function $\rho$, one can formulate the convexity notion in terms of the second fundamental form $\Pi$ of $\partial \Omega$ as shown in Corollary 5.4 of \cite{HL09}.} is independent of the choice of the local defining function $\rho$ various equivalent ways, including . Indeed, if $\psi$ is another local defining function, then $\psi = w \rho$ for some smooth function $w$ with $w(x) > 0$. Using \eqref{LDF}, one has $\rho(x) = 0$ and $D \rho(x)$ belongs to the normal space $N_x \partial \Omega$ at $x$ and hence
$$
D^2 \psi |_{T_x \partial \Omega} = w(x) D^2 \psi |_{T_x \partial \Omega} = w(x) B|_{T_x \partial \Omega}.
$$
Since $w(x) > 0$, the property \eqref{EC2} ensures that $w(x) B \in \EC{\Phi}^{\circ}$.

This boundary convexity property will be used to construct suitable barrier functions in the treatment of the Dirichlet problem for weakly $\Theta$-harmonic functions provided that $\Theta$ is associated to a suitable elliptic cone $\EC{\Theta}$. For a constant elliptic map $\Theta$ (namely for an elliptic set $\Phi$), the associated cone is defined in the following way. 

\begin{defn}\label{associated_cone} Let $\Phi \in \mathcal{E}$ be an elliptic set and $B \in \Ss(N)$. The elliptic cone $\EC{\Phi}$ associated to $\Phi$ is defined by
\begin{equation}\label{defn:ass_cone}
\mbox{ $\EC{\Phi}:= \overline{ \{ A \in \Ss(N): \ \exists t_0 = t_0(A) \in \R \ \text{such that} \ B + tA \in \Phi, \forall \ t \geq t_0 \} }$.}
\end{equation}
\end{defn}

One knows from Section 5 of \cite{HL09} that $\EC{\Phi}$ is an elliptic cone and that this construction does not depend on the vertex $B \in \Ss(N)$ since one has passed to the closure in $\Ss(N)$ in the definition \eqref{defn:ass_cone}. Using \eqref{defn:ass_cone} with $B = 0$ it is easy to see that
\begin{equation}\label{cone_exs}
    \mbox{ $\EC{\mathcal{P}} = \mathcal{P}$ \quad and \quad $\EC{\mathcal{\widetilde{P}}} = \widetilde{\mathcal{P}},$}
\end{equation}
which yield useful elementary examples.

Elliptic maps $\Theta$ yield an {\em elliptic cone map} by using \eqref{defn:ass_cone} pointwise; that is,
$$
    \EC{\Theta}(x) := \EC{\Theta(x)}.
$$
If the elliptic map $\Theta$ is uniformly upper semicontinuous, this cone map is constant. By Proposition \ref{uuscd}, the same is true for the dual elliptic cone map $\EC{\widetilde{\Theta}}$ associated to $\widetilde{\Theta}$. 

\begin{prop}\label{uusc_cone} If $\Theta$ is a uniformly upper semicontinuous elliptic map on $\Omega$ then there exists an elliptic cone $\EC{\Theta}$ such that
$$
    \mbox{ $\EC{\Theta}(x) = \EC{\Theta}$ for each $x \in \overline{\Omega}$.}
$$
Moreover,
$$
    \mbox{ $\EC{\Theta} = \Theta(x)$ for each $x \in \overline{\Omega}$ such that $\Theta(x)$ is an elliptic cone;}
$$
that is, for each $x \in \overline{\Omega}$ such that $A \in \Theta(x) \Leftrightarrow tA \in \Theta(x)$ for each $t \geq 0$.
\end{prop}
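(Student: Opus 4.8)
The plan is to isolate a single monotonicity property of the cone construction and then feed it uniform upper semicontinuity together with connectedness of $\overline{\Omega}$. First I would use Proposition \ref{extension} to replace $\Theta$ by its uniformly upper semicontinuous elliptic extension to $\overline{\Omega}$; this both makes $\EC{\Theta(x)}$ meaningful at boundary points and, via Proposition \ref{uusc} applied with $\veps = 1$, yields a radius $\delta_0 > 0$ such that $\Theta(x) + I \subset \Theta(y)$ and $\Theta(y) + I \subset \Theta(x)$ whenever $x, y \in \overline{\Omega}$ and $|x - y| < \delta_0$.

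The heart of the matter is the claim that for elliptic sets $\Phi, \Psi$ one has $\Phi + \veps I \subset \Psi \Rightarrow \EC{\Phi} \subset \EC{\Psi}$. I would prove this by exploiting the vertex independence of \eqref{defn:ass_cone} recorded just after Definition \ref{associated_cone}: compute $\EC{\Phi}$ using the vertex $B = 0$ and $\EC{\Psi}$ using the vertex $B = \veps I$, note that $tA \in \Phi$ for all large $t$ forces $\veps I + tA \in \Phi + \veps I \subset \Psi$ for all large $t$, and pass to closures in $\Ss(N)$. Applying this with $(\Phi, \Psi) = (\Theta(x), \Theta(y))$ and then swapping $x$ and $y$ shows that $\EC{\Theta(x)} = \EC{\Theta(y)}$ whenever $|x - y| < \delta_0$. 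Hence $x \mapsto \EC{\Theta(x)}$ is locally constant on $\overline{\Omega}$, which is connected because $\Omega$ is a domain; so it is constant, equal to some fixed elliptic cone which I will call $\EC{\Theta}$ (each $\EC{\Theta(x)}$ being an elliptic cone by \cite{HL09}).

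For the final assertion I would unwind \eqref{defn:ass_cone} at a point $x$ where $\Theta(x)$ is itself a pointed cone, using the vertex $B = 0$: the defining set $\{ A \in \Ss(N) : tA \in \Theta(x) \ \forall \, t \geq t_0, \ \text{some } t_0 \}$ coincides with $\Theta(x)$, since $A \in \Theta(x)$ gives $tA \in \Theta(x)$ for all $t \geq 0$, while conversely $t_0 A \in \Theta(x)$ with $t_0 > 0$ gives $A = t_0^{-1}(t_0 A) \in \Theta(x)$; as $\Theta(x)$ is closed we get $\EC{\Theta(x)} = \Theta(x)$, and hence $\EC{\Theta} = \Theta(x)$ by the previous paragraph.

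The only step needing any attention is the monotonicity claim — specifically the correct bookkeeping of base points in \eqref{defn:ass_cone} — but this is light, and I do not anticipate a genuine obstacle.
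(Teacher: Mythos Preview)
Your proof is correct and follows the same overall architecture as the paper's: establish $\EC{\Theta(x)} \subset \EC{\Theta(y)}$ for nearby $x,y$ via uniform upper semicontinuity, conclude local constancy, and then invoke connectedness; the second assertion is handled identically.

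The one noteworthy difference is in how the key inclusion is obtained. The paper works through interiors, citing the characterization $A \in \EC{\Theta(x)}^{\circ} \Leftrightarrow \exists\, \veps, R > 0$ with $t(A - \veps I) \in \Theta(x)$ for all $t \geq R$ (Corollary 5.10 of \cite{HL09}), and then pushes $tA + \veps I$ into $\Theta(y)$ directly. You instead isolate a clean monotonicity lemma --- $\Phi + \veps I \subset \Psi \Rightarrow \EC{\Phi} \subset \EC{\Psi}$ --- and prove it in one line by exploiting the vertex independence of \eqref{defn:ass_cone}. Your route is slightly more self-contained (no appeal to the interior characterization) and packages the idea reusably; the paper's route has the advantage of making explicit the $\veps$-perturbation structure that reappears later in Theorem \ref{thm:convexity}. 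Both are short and neither offers a real generality advantage here.
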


\begin{proof} In order to show that the cone map associated to $\Theta$ is constant, one makes use of the following characterization of the interior of the associated cone $\EC{\Theta}(x)$:
\begin{equation}\label{CM3}
   \mbox{ $A \in \EC{\Theta}(x)^{\circ} \Leftrightarrow \exists \ \veps > 0$ and $R > 0$ such that $t(A - \veps I) \in \Theta(x), \ \forall \ t \geq R$.}
\end{equation}
This claim in proven in Corollary 5.10 of \cite{HL09}. Moreover, if $A_0 \in \Theta(x)^{\circ}$ then there exist $\veps_0 > 0$ and $R_0 > 0$ such that \eqref{CM3} holds for each $A$ in a neighborhood of $A_0$ and each pair $R \geq R_0, \veps \leq \veps_0$. Now, with $x \in \overline{\Omega}$ fixed and $A \in \EC{\Theta}(x)^{\circ}$ the characterization \eqref{CM3} yields
\begin{equation}\label{CM4}
   \mbox{ $\exists \ \bar{t} = \bar{t}(A)$ such that $tA \in \Theta(x)$ for every $t \geq \bar{t}$.}
\end{equation}
In fact, one picks $\bar{t}(A) = R$ and then $tA = tA - \veps tI + \veps tI \in \Theta(x)$ for each $t \geq R$. By the uniform upper semicontinuity of the elliptic map $\Theta$, for fixed $\veps > 0$
there exists $\delta = \delta(\veps) > 0$ such that
$$
    \mbox{$tA + \veps I \in \Theta(y)$ if $y \in B_\delta(x) \cap \overline{\Omega}$ and $t \geq \bar{t}$}.
$$
Hence one has $ A \in \EC{\Theta}(y)$ by the definition \eqref{defn:ass_cone}. Since $A \in \EC{\Theta}(x)^{\circ}$ is arbitrary, by passing to the closure, one obtains
$$
    \mbox{ $\EC{\Theta}(x) \subset \EC{\Theta}(y)$ for all $x,y \in \overline{\Omega}$ such that $|x-y| < \delta$}.
$$
By inverting the roles of $x$ and $y$, one obtains that $\EC{\Theta}$ is locally constant on $\Omega$, which is connected.

For the second assertion, suppose that $x \in \overline{\Omega}$ is such that $\Theta(x) \in \mathcal{E}$ is itself a cone; that is, if condition \eqref{EC1} holds, then
\begin{equation}\label{CM2}
    \EC{\Theta}(x) = \Theta(x).
\end{equation}
Indeed, one sees that $\Theta(x) \subset \EC{\Theta}(x)$ by taking $B = 0$ and $t_0=0$ in \eqref{defn:ass_cone}. Conversely, if $A_0 \in \EC{\Theta}(x)$ then it is a limit in $\Ss(N)$ of $A_k \in \Ss(N)$ such that $tA_k \in \Theta(x)$ for each $t \geq t_k$ with $t_k = t_0(A_k)$. If $t_k \leq 1$ then $A_k \in \Theta(x)$. If, on the other hand $t_k > 1$, then $t_k A_k \in \Theta(x)$ and using the cone property for $\Theta(x)$ one again has $A_k \in \Theta(x)$ by selecting $t = 1/t_k$. Hence the claim \eqref{CM2} follows since $\Theta(x)$ is closed.

\end{proof}

We will now state the main result of this section, which ties $\EC{\Theta}$-convexity to the admissibility of $\partial \Omega$ for the Dirichlet problem through the existence of $\EC{\Theta}$-subharmonic {\em global defining functions} $\rho$ for the boundary; that is, $\rho \in C^2(\overline{\Omega})$ such that $\Omega = \{ \rho < 0\}$, $D \rho \neq 0$ on $\partial \Omega$ and $\rho$ is $\EC{\Theta}$-subharmonic on $\overline{\Omega}$.

\begin{thm}\label{thm:convexity} Let $\EC{\Theta}$ be an elliptic cone. If $\partial \Omega$ is strictly $\EC{\Theta}$-convex at each $x \in \partial \Omega$, there exists a global defining function $\rho$ for $ \partial \Omega$ that is strictly $\EC{\Theta}$-subharmonic on $\overline{\Omega}$.
In particular, if $\partial \Omega$ is strictly $\EC{\Theta}$-convex and $\EC{\Theta}$ is the elliptic cone associated to a uniformly upper semicontinuous elliptic map $\Theta$ on $\overline{\Omega}$, then
\begin{equation}\label{eq:crho}
\exists \veps > 0, R>0 \text{ such that } C(\rho - \veps|x|^2/2) \text{ is $\Theta$-subharmonic on $\Omega$ for all $C \ge R$}.
\end{equation}
\end{thm}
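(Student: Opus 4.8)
I would establish the two assertions of the statement separately. The first one --- extracting a global strictly $\EC{\Theta}$-subharmonic defining function from pointwise strict $\EC{\Theta}$-convexity --- I would obtain exactly as in the boundary-convexity part of \cite{HL09} (Section~5 there), since $\EC{\Theta}$ is a fixed elliptic cone. Briefly: starting from the signed distance $\rho_0$ as a local $C^2$ defining function near $\partial\Omega$ (so $D^2\rho_0(x)\,D\rho_0(x)=0$ and $D^2\rho_0(x)|_{T_x\partial\Omega}$ is the second fundamental form at $x$, which by strict $\EC{\Theta}$-convexity is the restriction of some element of $\EC{\Theta}^{\circ}$), one multiplies $\rho_0$ by a suitable positive factor $h\in C^2(\overline{\Omega})$ in order to correct the normal part of the $2$-jet of $h\rho_0$ along $\partial\Omega$; exploiting $\EC{\Theta}^{\circ}+\mathcal{P}\subseteq\EC{\Theta}^{\circ}$ this forces $D^2(h\rho_0)(x)\in\EC{\Theta}^{\circ}$ for $x\in\partial\Omega$, hence on a collar of $\partial\Omega$ by continuity, and gluing with a strictly convex function in the interior (Hessian a positive multiple of $I$, which lies in $\mathcal{P}^{\circ}\subseteq\EC{\Theta}^{\circ}$) produces a global defining function $\rho\in C^2(\overline{\Omega})$ with $D^2\rho(x)\in\EC{\Theta}^{\circ}$ for every $x\in\overline{\Omega}$. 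I expect this step to be routine.

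For the ``in particular'' assertion, let $\rho$ be as above. By Proposition~\ref{uusc_cone} the cone map associated to $\Theta$ is the constant cone $\EC{\Theta}$, so $D^2\rho(x)\in\EC{\Theta}^{\circ}=\EC{\Theta}(x)^{\circ}$ for every $x\in\overline{\Omega}$. Since $D^2\rho$ is continuous and $\overline{\Omega}$ is compact, $\{D^2\rho(x):x\in\overline{\Omega}\}$ is a compact subset of the open cone $\EC{\Theta}^{\circ}$, and I fix $\veps>0$ so that $D^2\rho(x)-2\veps I\in\EC{\Theta}^{\circ}$ for all $x\in\overline{\Omega}$; this is the $\veps$ appearing in \eqref{eq:crho}. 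Applying the characterization \eqref{CM3} of $\EC{\Theta}(x)^{\circ}$ (Corollary~5.10 of \cite{HL09}) to the matrix $D^2\rho(x)-2\veps I$, and absorbing the resulting positive multiple of $I$ into the elliptic set $\Theta(x)$, one obtains for each $x\in\overline{\Omega}$ a number $R_x>0$ with $s\,(D^2\rho(x)-2\veps I)\in\Theta(x)$ for all $s\geq R_x$. It remains to make this uniform in $x$, and this is where the uniform upper semicontinuity of $\Theta$ itself --- not merely of its cone map --- is used.

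To that end, let $\delta=\delta(\veps/2)>0$ be as in \eqref{uusc2} of Proposition~\ref{uusc}, so that $|x-y|<\delta$ implies $\Theta(x)+\tfrac{\veps}{2}I\subseteq\Theta(y)$, and let $\delta'>0$ satisfy $|x-y|<\delta'\Rightarrow\|D^2\rho(x)-D^2\rho(y)\|<\tfrac{\veps}{2}$ by uniform continuity of $D^2\rho$ on $\overline{\Omega}$ (hence in particular $D^2\rho(x)-D^2\rho(y)\geq-\tfrac{\veps}{2}I$). Cover the compact set $\overline{\Omega}$ by finitely many balls $B(x_i,\tfrac12\min\{\delta,\delta'\})$, $i=1,\dots,m$, and put $R:=\max\{1,R_{x_1},\dots,R_{x_m}\}$. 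For $x\in\overline{\Omega}$ and $C\geq R$, choosing $i$ with $|x-x_i|<\min\{\delta,\delta'\}$ and writing
\begin{equation*}
C\,(D^2\rho(x)-\veps I)=C\,(D^2\rho(x_i)-2\veps I)+\bigl[\,C\,(D^2\rho(x)-D^2\rho(x_i))+C\veps I\,\bigr],
\end{equation*}
the bracketed term is $\geq C\tfrac{\veps}{2}I\geq\tfrac{\veps}{2}I$, hence equals $\tfrac{\veps}{2}I+P$ with $P\in\mathcal{P}$, so, using $C\geq R_i$, the ellipticity of $\Theta(x_i)$, and then \eqref{uusc2} together with $|x-x_i|<\delta$,
\begin{equation*}
C\,(D^2\rho(x)-\veps I)\in\bigl[\,C\,(D^2\rho(x_i)-2\veps I)+P\,\bigr]+\tfrac{\veps}{2}I\subseteq\Theta(x_i)+\tfrac{\veps}{2}I\subseteq\Theta(x).
\end{equation*}
Hence $C(\rho-\veps|x|^2/2)\in C^2(\overline{\Omega})$ has Hessian $C(D^2\rho(x)-\veps I)\in\Theta(x)$ at every $x\in\Omega$, so by Definition~\ref{SHReg} it is $\Theta$-subharmonic on $\Omega$ for all $C\geq R$, which is \eqref{eq:crho}.

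The main obstacle I anticipate is the bookkeeping of slack in this last computation: one must anchor at $D^2\rho(x_i)-2\veps I$ rather than at $D^2\rho(x_i)-\veps I$ precisely in order to absorb simultaneously the oscillation of $D^2\rho$ over $\overline{\Omega}$ and the jump from $\Theta(x_i)$ to $\Theta(x)$ measured by \eqref{uusc2}, while the perturbing quadratic $\veps|x|^2/2$ must stay of a size independent of $C$; it is the homogeneity in $C$ --- both the anchored term and the positive correction $P$ scale linearly in $C$ --- that makes these two requirements compatible. Apart from this balancing, and the (standard but slightly delicate) construction of the factor $h$ in the first assertion, the proof is a routine use of compactness together with the elementary stability properties of elliptic sets.
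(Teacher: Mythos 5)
Your proof is correct and follows essentially the same route as the paper: the first assertion is delegated to Theorem~5.12 of \cite{HL09} together with Proposition~\ref{uusc_cone} (constancy of $\EC{\Theta}$), and the second is obtained by combining the cone characterization \eqref{CM3} with the uniform upper semicontinuity of $\Theta$ and a finite covering of $\overline{\Omega}$. Your bookkeeping is in fact slightly cleaner than the paper's --- you fix a single global $\veps$ at the outset from compactness of $D^2\rho(\overline{\Omega})$ in the open cone $\EC{\Theta}^{\circ}$, whereas the paper works with pointwise $\veps_x$ and takes a minimum at the end --- but the underlying mechanism (scaling along the cone plus absorbing the $\Theta(x_i)\to\Theta(x)$ jump via $\mathcal{P}$-stability) is identical.
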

\begin{proof} When $\Theta$ is an elliptic set (a constant elliptic map), this is Theorem 5.12 of \cite{HL09} and so we have the existence of $\rho$ since $\EC{\Theta}$ is constant by Proposition \ref{uusc_cone}.  When $\Theta$ is a non constant elliptic map, the proof of \eqref{eq:crho} requires some additional reasoning. By the characterization \eqref{CM4}, for every $x \in \overline{\Omega}$ there exists $\veps_x > 0$ e $C_x > 0$ such that
$$
    C(A-3 \veps I) \in \Theta(x)^{\circ}
$$
for all $C \ge C_x$, $\veps \le \veps_x$ and $A$ in a neighborhood of $D^2 \rho (x)$. Since $\rho \in \mathcal{C}^2(\overline{\Omega})$ one has that $C(D^2 \rho(y) -2 \veps I) \in \Theta(x)^{\circ}$ for every $y$ in a neighborhood $U_x$ of $x$. By the uniform upper semicontinuity of $\Theta$ (and shrinking $U_x$, if necessary) one obtains $C(D^2 \rho(y) -2 \veps I) + C_x \veps_x I \in \Theta(y)$ on $U_x$. Hence $C(D^2 \rho(y) - \veps I) \in \Theta(y) + C\veps I - C_x \veps_x I$. By extracting a finite subcovering $\{ U_{x_i} \}_{i =1}^N$ of $\overline{\Omega}$, and by taking the minimum $\bar{\veps}$ of $\{ \veps_{x_i} \}_{i=1}^N$ and the maximum $C$ of $\{C_{x_i} \}_{i=1}^N$, one obtains that $C(D^2 \rho(y) - \bar{\veps} I) \in \Theta(y) + C\bar{\veps} I - C_x \veps_x I \subset \Theta(y)$ on $\overline{\Omega}$ with $C >> 0$.
\end{proof}

\section{Perron's method and the Dirichlet problem for elliptic maps}\label{DirProb}

In this section, we give the main result of this paper on the existence and uniqueness of solutions to the Dirichlet problem for weakly $\Theta$-harmonic functions where $\Theta$ is a uniformly upper semicontinuous elliptic map, which generalizes Theorem 6.2 of \cite{HL09} for constant elliptic maps. The proof is an implementation of the Perron method where most of the work has already been performed in the previous sections. When $\Theta$ corresponds to an elliptic branch of a fully nonlinear PDE \eqref{FNE}, one has an existence and uniqueness result for viscosity solutions with restrictions as described in Section \ref{viscosity_solns}. Examples and applications will be discussed in the following section.

The main result is the following theorem, where we recall that $\Theta$ uniformly upper semicontinuous on $\Omega$ extends to $\overline{\Omega}$ by Proposition \ref{extension}.

\begin{thm}\label{thm:EU}
Let $\Omega \subset \R^N$ be a bounded domain with $\partial \Omega$ of class $C^2$ and let $\Theta$ be a uniformly upper semicontinuous elliptic map on $\overline{\Omega}$. Suppose that $\partial \Omega$ is both strictly $\EC{\Theta}$-convex and strictly $\EC{\widetilde{\Theta}}$-convex. Then for each $\varphi \in C(\partial \Omega)$, there exists a unique $u \in C(\overline{\Omega})$ which is a $\Theta$-harmonic in the sense of Definition \ref{def:Weak_SH} and such that  $u = \varphi$ on $\partial \Omega$.
\end{thm}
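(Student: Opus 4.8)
\emph{Uniqueness} is immediate from Theorem~\ref{thm:CP}: if $u_1,u_2\in C(\overline{\Omega})$ are both $\Theta$-harmonic with $u_1=u_2=\varphi$ on $\partial\Omega$, then applying the comparison principle to $(u_1,u_2)$ and to $(u_2,u_1)$ gives $u_1\le u_2$ and $u_2\le u_1$ in $\Omega$. For \emph{existence} the plan is to run Perron's method, whose ingredients are by now all available. Set $\mathcal{F}_\varphi:=\{w\in\USC(\overline{\Omega}):w|_\Omega\in\TSH(\Omega),\ w\le\varphi\text{ on }\partial\Omega\}$ and $u:=\sup_{w\in\mathcal{F}_\varphi}w$. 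The proof would proceed in the following steps.

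\textbf{Barriers and boundary values.} Since $\partial\Omega$ is strictly $\EC{\Theta}$-convex, Theorem~\ref{thm:convexity} provides a global defining function $\rho$ with property \eqref{eq:crho}, so that large multiples of $\rho-\veps|x|^2/2$ are $\Theta$-subharmonic; strict $\EC{\widetilde{\Theta}}$-convexity gives the analogous defining function for $\widetilde{\Theta}$. For each $\zeta\in\partial\Omega$ one then builds, by the standard construction of \cite{HL09} (combining these defining functions, affine functions, and the modulus of continuity of $\varphi$, using the affine and maximum stability of Lemma~\ref{max_affine}), a continuous \emph{lower barrier} $\underline{w}_\zeta\in\TSH(\Omega)\cap C(\overline{\Omega})$ with $\underline{w}_\zeta\le\varphi$ on $\partial\Omega$ and $\underline{w}_\zeta(\zeta)=\varphi(\zeta)$, and a continuous \emph{upper barrier} $\overline{w}_\zeta\in C(\overline{\Omega})$ with $-\overline{w}_\zeta\in\TSHD(\Omega)$, $\overline{w}_\zeta\ge\varphi$ on $\partial\Omega$ and $\overline{w}_\zeta(\zeta)=\varphi(\zeta)$. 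The lower barriers show $\mathcal{F}_\varphi\neq\emptyset$; comparison (Theorem~\ref{thm:CP}) against the upper barriers shows every $w\in\mathcal{F}_\varphi$ satisfies $w\le\overline{w}_\zeta$ in $\Omega$, so $u$ is finite and locally bounded above, and squeezing $u$ between the two barriers yields $u^\ast=u_\ast=\varphi$ on $\partial\Omega$.

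\textbf{$u$ is $\Theta$-harmonic.} By the supremum property (S) of Lemma~\ref{limit_sup}, $u^\ast\in\TSH(\Omega)$; since $u^\ast=\varphi$ on $\partial\Omega$ this gives $u^\ast\in\mathcal{F}_\varphi$, hence $u=u^\ast\in\TSH(\Omega)$ is upper semicontinuous and attains $\varphi$ continuously on $\partial\Omega$. To see that $u_\ast$ is $\Theta$-superharmonic, I would run the usual Perron bump argument: if $-u_\ast\notin\TSHD(x_0)$ for some $x_0\in\Omega$, then by Remark~\ref{use_TSH} and Lemma~\ref{lemma:nonSA} there is a $C^2$ function $\psi$ with $D^2\psi(x_0)\in\Theta(x_0)^\circ$ touching $u_\ast$ from below at $x_0$ with a quadratic gap; the uniform upper semicontinuity of $\Theta$ (Proposition~\ref{uusc}) promotes $D^2\psi(x_0)\in\Theta(x_0)^\circ$ to $D^2\psi(x)\in\Theta(x)$ on a small ball, so by Proposition~\ref{coherence} $\psi\in\TSH$ there, and then $\max\{u,\psi+\sigma\}$ for small $\sigma>0$ is an element of $\mathcal{F}_\varphi$ (maximum property (M) of Lemma~\ref{max_affine}) which is strictly larger than $u$ near $x_0$ --- a contradiction. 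Finally, for interior continuity one invokes Proposition~\ref{WTP}: the perturbed translates $u_{y;\veps}=u(\cdot+y)+\frac{\veps}{2}|\cdot|^2$ lie in $\TSH(\Omega_{\delta})$ for $|y|<\delta$, and comparing them with $u$ (now continuous up to $\partial\Omega$) yields a modulus of continuity for $u$ on compact subsets; equivalently, $u^\ast\in\USC(\overline{\Omega})$ and $u_\ast\in\LSC(\overline{\Omega})$ are now a $\Theta$-subsolution and a $\Theta$-supersolution agreeing on $\partial\Omega$, so Theorem~\ref{thm:CP} forces $u^\ast\le u_\ast$, whence $u:=u^\ast=u_\ast\in C(\overline{\Omega})$ is $\Theta$-harmonic with $u=\varphi$ on $\partial\Omega$.

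\textbf{Main difficulty.} The one place where the non-constant case genuinely departs from \cite{HL09} is the loss of translation invariance: $u\in\TSH(\Omega)$ no longer implies $u(\cdot+y)\in\TSH$, so the bump construction and the interior-continuity estimate --- both of which exploit exact translations when $\Theta$ is constant --- must instead be driven by the uniform-upper-semicontinuity surrogates of Propositions~\ref{uusc} and \ref{WTP}. The delicate bookkeeping is in checking that a quadratically perturbed translate of a $\Theta$-subharmonic function stays $\Theta$-subharmonic on a slightly smaller domain and that the unavoidable loss of $\veps$ at each step can be absorbed; constructing barriers simultaneously adapted to $\EC{\Theta}$ and $\EC{\widetilde{\Theta}}$ is a secondary, more routine technical point that carries over from \cite{HL09}.
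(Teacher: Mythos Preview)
Your proposal is correct and follows essentially the same Perron scheme as the paper: define the Perron family, use the $\EC{\Theta}$- and $\EC{\widetilde{\Theta}}$-convexity to build barriers, invoke property (S) for $u^\ast\in\TSH$, run the bump argument for superharmonicity, and establish interior continuity via the weak translation property of Proposition~\ref{WTP}. Two small differences are worth recording. First, the paper obtains the uniform upper bound on the Perron family not by comparison against upper barriers but via a one-shot argument using Lemma~\ref{lem:SHQF} (existence of a global $\widetilde{\Theta}$-subharmonic quadratic) together with the subaffine maximum principle; your barrier-plus-comparison route works equally well. Second, the paper runs the bump argument directly on $u=u^\ast$ (Step~3) and then devotes a separate Step~5 to interior continuity via Proposition~\ref{WTP}, whereas your alternative---prove $-u_\ast\in\TSHD$ and then apply Theorem~\ref{thm:CP} to the pair $(u^\ast,u_\ast)$ to force $u^\ast\le u_\ast$---is a clean shortcut that collapses Step~5 into a single application of comparison; this is the standard viscosity-theory trick and is a genuine simplification over the paper's Walsh-type argument. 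Just be aware that in the $u_\ast$ version of the bump argument the contradiction ``$\max\{u,\psi+\sigma\}>u$ near $x_0$'' is obtained along a sequence $x_n\to x_0$ with $u(x_n)\to u_\ast(x_0)$, not necessarily at $x_0$ itself.
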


\begin{proof} \textit{(Uniqueness)}. This follows immediately from the comparison principle of Theorem \ref{thm:CP}. In fact, if $u,v$ are two solutions such that $u = v = \varphi$ on $\partial \Omega$ one has both $u \leq v$ and $v \leq u$ on $\partial \Omega$, where $u,v \in \TSH$ and $-u,-v \in \TSHD$.

\textit{(Existence)}. Consider the Perron family for the given boundary data $\varphi$
\begin{equation}\label{PF1}
    \F(\varphi) := \{ v \in \USC(\overline{\Omega}) : v|_{\Omega} \in \TSH \text{ and } v|_{\partial \Omega} \le \varphi\}.
\end{equation}
We will show that the upper envelope
\begin{equation}\label{PF2}
    u(x) := \sup_{v \in \F(\varphi)} v(x) \qquad \forall x \in \overline{\Omega},
\end{equation}
is continuous, $\Theta$-harmonic and satisfies the boundary condition.

\vspace{2ex}

\noindent {\bf Step 1:} {\em The Perron family \eqref{PF1} is uniformly bounded from above on $\overline{\Omega}$ and hence the Perron function \eqref{PF2} is well defined and finite on $\overline{\Omega}$.}

The proof of this fact uses the maximum principle for subaffine functions (Lemma \ref{MPSA}) and the following consequence of having uniform upper semicontinuity
of an elliptic map on a compact set.

\begin{lem}\label{lem:SHQF} Let $\Theta$ be a uniformly upper semicontinuous elliptic map on $\overline{\Omega}$. Then there exist $B, \widetilde{B} \in \Ss(N)$ such that
$$
    B \in \bigcap_{x \in \overline{\Omega}} \Theta(x) \quad \text{and} \quad \widetilde{B} \in \bigcap_{x \in \overline{\Omega}} \widetilde{\Theta}(x).
$$
In particular, $B$ and $\widetilde{B}$ can be taken to be a positive multiple of the identity matrix $I$.
\end{lem}

\begin{proof} Since uniform upper semicontinuity and ellipticity are preserved under taking duals, it is enough to consider the existence of $B$. Using the uniform upper semicontinuity with $\veps = 1$, the formulation \eqref{uusc2} yields
\begin{equation}\label{SHQF2}
 \mbox{   $\Theta(y) + I \subset \Theta(x)$ for each $x,y \in \overline{\Omega}$ with $|x-y| < \delta$.}
\end{equation}
Since $\Theta(y)$ is an elliptic set for each $y \in \overline{\Omega}$, there exists $t_y > 0$ such that
\begin{equation}\label{SHQF3}
 \mbox{   $tI \in \Theta(y)$ for each $t \geq t_y$.}
\end{equation}
For a proof of this elementary fact, see Section 3 of \cite{HL09}. Since $\overline{\Omega}$ is compact there exists a finite open covering $\{ B_{\delta}(y_k) \}_{k=1}^n$ with $y_k \in \overline{\Omega}$. Combining \eqref{SHQF2} and \eqref{SHQF3}, for each $x \in \overline{\Omega}$
$$
    \mbox{   $(1 + t)I \in \Theta(x)$ provided that  $\displaystyle{t \geq T:= \max_{1 \leq k \leq n}t_k}$.}
$$
Hence $B:=(1+T)I$ gives the required matrix.
\end{proof}

Using the lemma and the coherence property one has $Q_{\widetilde{B}} \in C^2(\overline{\Omega}) \cap \TSHD(\Omega)$ where $Q_{\widetilde{B}}(x):= \frac{1}{2} \langle \widetilde{B}x, x \rangle $. Hence for each $v \in \mathcal{F}(\varphi) \subset \USC(\overline{\Omega}) \cap \TSH(\Omega)$ one has
$$
    v + Q_{\widetilde{B}} \in \USC(\overline{\Omega}) \cap \SA(\Omega).
$$
Applying the maximum principle of Lemma \ref{MPSA} and using the nonnegativity of $Q_{\widetilde{B}}$ plus the boundary condition on $v$ one has
$$
    v \leq v + Q_{\widetilde{B}} \leq \max_{\partial \Omega}(v + Q_{\widetilde{B}}) \leq \max_{\partial \Omega}(\varphi + Q_{\widetilde{B}}) \leq M \quad \text{on} \ \overline{\Omega},
$$
for some $M \in \R$ as $\varphi$ are $Q_{\widetilde{B}}$ bounded. This completes Step 1.

\vspace{2ex}

\noindent {\bf Step 2:} {\em The Perron function $u$ belongs to the Perron family $\mathcal{F}(\varphi)$; that is}
$$
    u \in \USC(\overline{\Omega}), \quad u_{|\Omega} \in \TSH(\Omega), \quad \emph{and} \quad u_{|\partial \Omega} \leq \varphi.
$$
When $\Theta$ is a constant elliptic map, this is Proposition 6.7 of \cite{HL09} and the proof generalizes easily to the case of $\Theta$ uniformly upper semicontinuous on $\overline{\Omega}$. For completeness, we will give the proof. The main ingredients are the existence of suitable barrier functions (following from the $\EC{\Theta}$-convexity and the $\EC{\widetilde{\Theta}}$-convexity of the boundary) and the maximum principle for subaffine functions. The properties (M), (A), and (S) of the space $\TSH$ are needed.

By the property (S) of Lemma \ref{limit_sup}, the upper semicontinuous regularization of $u$ satisfies
\begin{equation}\label{P1}
u^* \in \TSH(\Omega) \cap \USC(\overline{\Omega}).
\end{equation}
We claim that $\Omega$ strictly $\EC{\widetilde{\Theta}}$-convex implies that
\begin{equation}\label{P2}
u^*_{|\partial \Omega} \leq \varphi.
\end{equation}
When $\Theta$ is constant, this claim is Lemma 6.8 of \cite{HL09} and we recall that $\widetilde{\Theta}$ uniformly upper semicontinuous implies that the cone map $\EC{\widetilde{\Theta}}$ is constant. Hence the proof carries over without change,  but in order to illustrate the role of the convexity assumption we will reproduce it here. Theorem \ref{thm:convexity} applied to $\widetilde{\Theta}$ yields a smooth global defining function $\rho$ which is strictly $\EC{\widetilde{\Theta}}$-subharmonic on $\overline{\Omega}$. In particular, for $x_0 \in \partial \Omega$ fixed but arbitrary, combining the affine property (A) of Lemma \ref{max_affine} with \eqref{eq:crho} yields the existence of $\veps > 0$ and $R> 0$ so that
\begin{equation}\label{P3}
    C(\rho - \veps|x-x_0|^2) \text{ is $\widetilde{\Theta}$-subharmonic on $\Omega$ for all $C \ge R$}.
\end{equation}
Since $\rho$ is continuous and vanishes on $\partial \Omega$, for each $\delta > 0$ one can pick $C$ in \eqref{P3} large enough to ensure
\begin{equation}\label{P4}
\mbox{ $\varphi(x) + C(\rho - \veps|x-x_0|^2) = \varphi(x) -C \veps|x-x_0|^2 \leq \varphi(x_0) + \delta$ for all $x \in \partial \Omega$.}
\end{equation}
By \eqref{P3} and \eqref{P4}, for each $v \in \mathcal{F}(\varphi) \subset \USC(\overline{\Omega}) \cap \TSH(\Omega)$ one has
\begin{equation}\label{P5}
    w:= v + C(\rho - \veps|x-x_0|^2) \in \USC(\overline{\Omega}) \cap \SA(\Omega) \quad with \quad w_{|\partial \Omega} \leq \varphi(x_0) + \delta.
\end{equation}
Applying again the maximum principle of Lemma \ref{MPSA} to $w$, \eqref{P5} yields
\begin{equation}\label{P6}
\mbox{ $w(x) = v + C(\rho - \veps|x-x_0|^2) \leq \varphi(x_0) + \delta$ for each $x \in \overline{\Omega}$.}
\end{equation}
Taking the sup over $v \in v \in \mathcal{F}(\varphi)$ and then performing the upper semicontinuous regularization in \eqref{P6} yields
$$
    \mbox{ $u^*(x) + C(\rho - \veps|x-x_0|^2) \leq \varphi(x_0) + \delta$ for each $x \in \overline{\Omega}$,}
$$
which when evaluated in $x = x_0$ yields the claim \eqref{P2}, as $x_0$ was arbitrary.

Combining \eqref{P1} and \eqref{P2} shows that $u^* \in \mathcal{F}(\varphi)$ and hence $u^* \leq u$ on $\overline{\Omega}$ by the definition of $u$, but one always has $u \leq u^*$ by the definition of $u^*$ and hence
$$
    u= u^* \in \mathcal{F}(\varphi),
$$
as desired. This completes Step 2.

\vspace{2ex}

\noindent {\bf Step 3:} $-u_{|\Omega} \in \TSHD(\Omega)$

For $\Theta$ a constant elliptic map, this claim is Lemma 6.12 of \cite{HL09} and its proof is by contradiction by making use of a ``bump construction'' which exploits the property (M) of Lemma \ref{max_affine}. Indeed, suppose that $-u|_\Omega \notin \TSHD(\Omega)$ and hence there exist $x_0 \in \Omega$ and $A \in \widetilde{\widetilde{\Theta}}(x_0) = \Theta(x_0)$ such that $-u + Q_{A,x_0}$ is not subaffine in $x_0$. With the aid of the quadratic form $Q_{A,x_0}$, one can construct the function
$$
w = \left\{
\begin{array}{ll}
u & \text{ on } \overline{\Omega} \setminus B_r(x_0) \\
\max\{u, Q_{A,x_0}+a+\veps|x-x_0|^2\} & \text{ on }  B_r(x_0)
\end{array}
\right.
$$
for a suitable affine function $a$ and suitable $\veps, r > 0$ so that $w$ satisfies $w(x_0) > u(x_0)$. However, $w$ belongs to the family $\F(\varphi)$ and hence satisfies $w(x_0) \le u(x_0)$, a contradiction. This completes Step 3.

\vspace{2ex}

\noindent {\bf Step 4:} {\em $u$ is continuous at each point $x_0 \in \partial \Omega$ and $u|_{\partial \Omega} = \varphi$}

Since $u \in \USC(\overline{\Omega})$ and $u|_{\partial \Omega} \leq \varphi$ by Step 1, it suffices to show that
\begin{equation}\label{P9}
\mbox{ $\displaystyle{\liminf_{x \to x_0} u(x) \geq \varphi(x_0)}$ \quad for each $x_0 \in \partial \Omega$,}
\end{equation}
which obviously involves another barrier argument exploiting the assumption that $\partial \Omega$ is also strictly $\EC{\Theta}$-convex (as done in Lemma 6.9 of \cite{HL09}) for constant elliptic maps. As in the proof of claim \eqref{P2} from Step 2, for $x_0 \in \partial \Omega$ fixed but arbitrary, combining the affine property (A) of Lemma \ref{max_affine} with \eqref{eq:crho} yields the existence of $\veps > 0$ and $R> 0$ so that
\begin{equation}\label{P10}
    C(\rho - \veps|x-x_0|^2) \text{ is $\Theta$-subharmonic on $\Omega$ for all $C \ge R$}.
\end{equation}
For each $\delta > 0$ one can pick $C$ in \eqref{P10} large enough to ensure
\begin{equation}\label{P11}
\mbox{ $\varphi(x_0) + C(\rho - \veps|x-x_0|^2) \leq \varphi(x) + \delta$ for all $x \in \partial \Omega$.}
\end{equation}
The function $v:= \varphi(x_0) + C(\rho - \veps|x-x_0|^2) - \delta$ is smooth and belongs to the Perron family $\mathcal{F}(\varphi)$ by \eqref{P10} and \eqref{P11}. Therefore $v \leq u$ on $\overline{\Omega}$ by the definition of $u$ and hence
$$
    \liminf_{x \to x_0} u(x) \geq \liminf_{x \to x_0} v(x) = \varphi(x_0) - \delta.
$$
Since $\delta$ is arbitrary, the claim \eqref{P9} follows. This completes Step 4 and one last step remains.

\vspace{2ex}

\noindent {\bf Step 5:} {\em $u$ is continuous at each point $x \in \Omega$}

This claim is Proposition 6.11 of \cite{HL09} in the case of constant elliptic maps. The original argument of Walsh (\cite{W68}) used in \cite{HL09} makes effective use the following {\em strong translation property}
$$
    \mbox{ $u \in \TSH(\Omega) \Rightarrow u_y:= u(\cdot + y) \in \TSH(\Omega)$ for each small $y \in \R^N$,}
$$
which does not hold for a general uniformly upper semicontinuous elliptic map. However, the argument can be adapted to the situation in which only the {\em weak translation property} of Proposition \ref{WTP} is available. Indeed, fix $\veps > 0$ and set $\bar{\veps} = \veps/d^2$ with $d := \sup_{x \in\Omega} |x|$. By Proposition \ref{WTP}, there exists $\delta > 0$ such that for every $y$ with $|y| < \delta$ we have
$$ u_y + \bar{\veps} |\cdot|^2 \in \Theta \mathrm{SH}(\Omega_{\delta}) \quad \mbox{and} \quad \Omega_{\delta}:= \{x \in \Omega : d(x, \partial \Omega) > \delta\},$$
where we extend $u$ to be $-\infty$ on $\R^n \setminus \overline{\Omega}$. Let $C_\delta = \{x \in \overline{\Omega} : d(x, \partial \Omega) < \delta\}$. Since $u \in C(\partial \Omega)$, by rescaling  $\delta$ if necessary, one has
$$
u_y \le u + \veps \quad \text{on $C_{2\delta}$ if $|y| < \delta$}.
$$
Then define the function
$$
 g_y = \max\{u_y + \bar{\veps} |x|^2 - 2\veps, u \}.
$$
which satisfies $g_y \in \Theta \mathrm{SH}(\Omega_\delta)$ by applying the properties (A) and (M) of Lemma \ref{max_affine}. One has $g_y = u$ on  $C_{2\delta}$ since $u_y + \bar{\veps} |x|^2 - 2\veps \le u$ on  $C_{2\delta}$ . Hence $g_y \in \USC(\overline{\Omega})$ is $\Theta$-subharmonic on all of $\Omega$ and satisfies $g_y \leq \varphi$ on $\partial \Omega$ and consequently $g_y \in \mathcal{F}(\varphi)$. Thus $g_y \le u$ on $\overline{\Omega}$ and
$$
 u_y + \bar{\veps} |x|^2 - 2\veps \le g_y \le u,
$$
and hence for any $y$ with $|y| < \delta$
$$
 u_y \le u - \bar{\veps} |x|^2 + 2\veps \le u + 2\veps.
$$
Making the change of variables $z = x+y$, one has
$$
 \text{if $z, x \in \overline{\Omega}$ and $|z-x| < \delta$, then $u(z) \le u(x) + 2\veps$}.
$$
Interchanging the roles of $x$ and $y$, by exploiting the symmetry in the definition of continuity for $\Theta$, one obtains $|u(x) - u(z)| \le 2 \veps$.
\end{proof}

\section{Applications to fully nonlinear PDEs}\label{examples}

Having established the well-posedness of the Dirichlet problem for uniformly upper semicontinuous elliptic maps $\Theta$ on domains $\Omega$ which are suitably convex with respect to $\Theta$, we address the problem of obtaining well-posedness of the Dirichlet problem for elliptic branches of the nonlinear PDE \eqref{FNE}. Using Theorem \ref{thm:EU} as an abstract tool, we return to the class of fully nonlinear PDEs introduced in Proposition \ref{pick_branch} for which we are able to associate an elliptic map $\Theta$. We will give a fairly general structural condition on $F(x,A)$ which ensures the uniform upper semicontinuity of its associated elliptic map $\Theta$ and then give a description of the interiors of the elliptic cones $\EC{\Theta}^{\circ}$ and $\EC{\widetilde{\Theta}}^{\circ} $ used to define the needed boundary convexity (see Propositions \ref{UCbranch} and \ref{natural_cones} below). Combining these propositions with Theorem \ref{thm:EU}, when $F$ and $\Theta$ also satisfy the branch condition \eqref{def_branch2}, yields well-posedness results for viscosity solutions of an elliptic branch of the PDE. This theory will be illustrated with a few natural examples, including equations involving the eigenvalues of the Hessian $\lambda_k(D^2u)$ and their perturbations. Throughout we will attempt to make a comparison with known results using the classical viscosity approach including possibly reformulating the the given PDE with the aid of well chosen Bellman operators.

\subsection{Structure conditions, the comparison principle and admissible domains}

We begin with the structural condition on $F$ which will ensure the uniform upper semicontinuity of the associated elliptic map and hence the validity of the comparison principle.

\begin{prop}\label{UCbranch} Let $\Phi$ be a uniformly upper semicontinuous elliptic map on $\Omega$ and let $F \in C(\Omega \times \Ss(N), \R)$ satisfy the conditions \eqref{deg_ell} and \eqref{F_strict} of Proposition \ref{pick_branch} with respect to $\Phi$. Also assume that for some $\veps^* > 0$ one has the following condition:  for each $\veps \in (0, \veps^*]$ there exists $\delta = \delta(\veps) > 0$ such that
\begin{equation}\label{UCF}
F(y,A + \veps I) \geq F(x,A) \quad \forall A \in \Phi(x), \forall x,y \in \Omega \ {\rm such \ that \ } |x - y| < \delta.
\end{equation}
Then the elliptic map $\Theta$ defined by \eqref{def_branch1}; that is,
$$
\Theta(x) := \{ A \in \Phi(x):  F(x,A) \geq 0 \},
$$
extends to a uniformly upper semicontinuous elliptic map on $\overline{\Omega}$.
\end{prop}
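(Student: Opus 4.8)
The plan is to verify directly the equivalent formulation of uniform upper semicontinuity recorded in Proposition \ref{uusc} and then invoke Proposition \ref{extension} to pass to $\overline{\Omega}$. First note that $\Theta$ is an elliptic map: this is precisely the first assertion of Proposition \ref{pick_branch}, whose proof uses only \eqref{deg_ell} and \eqref{F_strict} (the branch condition \eqref{def_branch2} is irrelevant to the ellipticity of $\Theta$). So it remains to check the regularity.

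Next, I would fix $\veps \in (0, \veps^*]$ and let $\delta(\veps)$ be the minimum of two radii: the radius $\delta_1 = \delta_1(\veps)$ coming from the uniform upper semicontinuity of $\Phi$ via Proposition \ref{uusc}, so that $x,y \in \Omega$ with $|x-y| < \delta_1$ forces $\Phi(x) + \veps I \subset \Phi(y)$; and the radius $\delta_2 = \delta_2(\veps)$ furnished by the hypothesis \eqref{UCF}. Then for any $A \in \Theta(x)$, that is $A \in \Phi(x)$ with $F(x,A) \geq 0$, one obtains on the one hand $A + \veps I \in \Phi(y)$, and on the other hand $F(y, A + \veps I) \geq F(x,A) \geq 0$ directly from \eqref{UCF}; hence $A + \veps I \in \Theta(y)$. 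This gives $\Theta(x) + \veps I \subset \Theta(y)$, and interchanging the roles of $x$ and $y$ yields the symmetric inclusion $\Theta(y) + \veps I \subset \Theta(x)$, which is exactly \eqref{uusc2} for this $\veps$.

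This establishes \eqref{uusc2} only for $\veps \le \veps^*$, but the remaining case follows at once from the elliptic-set structure: if $\veps > \veps^*$, set $\delta(\veps) := \delta(\veps^*)$; then $|x-y| < \delta(\veps)$ gives $\Theta(x) + \veps^* I \subset \Theta(y)$, whence $\Theta(x) + \veps I = (\Theta(x) + \veps^* I) + (\veps - \veps^*) I \subset \Theta(y) + \mathcal{P} \subset \Theta(y)$ since $\Theta(y) \in \mathcal{E}$, and similarly with $x,y$ swapped. Thus $\Theta$ is a uniformly upper semicontinuous elliptic map on $\Omega$, and Proposition \ref{extension} produces the extension to $\overline{\Omega}$.

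There is no genuine obstacle here: the argument is a bookkeeping combination of the uniform upper semicontinuity of $\Phi$ (used to remain inside $\Phi$) with condition \eqref{UCF} (used to keep $F \geq 0$ after the $\veps I$-shift). The only point requiring mild care is that \eqref{UCF} is postulated just for small $\veps$, which is why one bootstraps to all $\veps$ using stability of elliptic sets under addition of nonnegative matrices.
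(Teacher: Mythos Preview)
Your proof is correct and follows essentially the same approach as the paper: verify \eqref{uusc2} by combining the uniform upper semicontinuity of $\Phi$ with \eqref{UCF}, then invoke Proposition \ref{extension}. The only cosmetic difference is that the paper first extends \eqref{UCF} to all $\veps > 0$ via the degenerate ellipticity \eqref{deg_ell} of $F$, whereas you instead bootstrap at the level of $\Theta$ using $\Theta(y) + \mathcal{P} \subset \Theta(y)$; these are equivalent.
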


\begin{proof} $\Theta$ gives an elliptic map by Proposition \ref{pick_branch} and will have the desired extension by Proposition \ref{uusc}, provided that $\Theta$ is uniformly upper semicontinuous on $\Omega$. Given $\veps^* > 0$ so that \eqref{UCF} holds, the degenerate ellipticity \eqref{deg_ell} implies that \eqref{UCF} continues to hold for each $\veps > \veps^*$ by taking $\delta(\veps) = \delta(\veps^*)$. Hence for any $\veps > 0$, let $x,y \in \Omega$ with $|x - y|< \delta$ and take any $B \in \Theta(x) + \veps I$ so that $B - \veps I \in \Theta(x) \subset \Phi(x)$. Using the definition of $\Theta$ and \eqref{UCF} one finds
$$
0 \leq F(x, B - \veps I) \leq F(y, B - \veps I + \veps I) = F(y, B),
$$
so that $B \in \Theta(y)$ provided that $B \in \Phi(y)$. Using the uniform upper semicontinuity of $\Phi$, one has $B = (B - \veps I) + \veps I \in \Phi(y)$ for $x,y \in \Omega$ with $|x-y| < \delta_{\Phi}(\veps)$. Hence $\Theta(x) + \veps I \subset \Theta(y)$ for each $x,y \in \Omega$ with $|x - y|< \min\{ \delta(\veps), \delta_{\Phi}(\veps)\}$. Interchanging the roles of $x$ and $y$ gives the uniform upper semicontinuity of $\Theta$.
\end{proof}

Notice that if $F$ satisfies the hypotheses of Proposition \ref{UCbranch} then the associated elliptic map $\Theta$ and its dual $\widetilde{\Theta}$ will satisfy the comparison principle of Theorem \ref{thm:CP}. General and concrete examples of $F$ satisfying \eqref{UCF} will be given as well as a comparison with other known structural conditions which ensure the validity of the comparison principle (see Remark \ref{SC_NTD}, Example \ref{SC_CAB} and the discussion which follows).

We now give a description of the interiors of the elliptic cones $\EC{\Theta}^{\circ}$ and $\EC{\widetilde{\Theta}}^{\circ} $ used to specify the needed strict convexity of admissible domains $\Omega$ for the Dirichlet problem.

\begin{prop}\label{natural_cones} Let $\Phi$, $F$ be as in Proposition \ref{UCbranch} and $\Theta$ the associated uniformly upper semicontinuous elliptic map on $\overline{\Omega}$. Then, for any $\bar{x} \in \Omega$,
\begin{equation}\label{ENC1}
\EC{\Theta}^{\circ} = \{ A \in \EC{\Phi(\bar{x})}^\circ : \text{$\exists , \epsilon, R > 0$ s.t. $F(\bar{x}, C(A - \epsilon I)) \ge 0, \ \forall \ C \ge R$} \}
\end{equation}
and
\begin{equation}\label{ENC2}
\EC{\widetilde{\Theta}}^{\circ} = \EC{\widetilde{\Phi}}(\bar{x})^\circ \cup \{ A \in \Ss(N) : \text{$\exists \, \epsilon, R > 0$ s.t. $F(\bar{x}, C(-A + \epsilon I)) < 0, \ \forall \ C \ge R$} \}
\end{equation}
where $\EC{\Phi(x)}$ (resp. $\EC{\widetilde{\Phi}(x)}$) is the elliptic cone associated to $\Phi(x)$ (resp. $\widetilde{\Phi}(x)$).
\end{prop}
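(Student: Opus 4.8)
The plan is to reduce both identities to the description of the interior of an associated elliptic cone recorded in \eqref{CM3} (Corollary~5.10 of \cite{HL09}): for an elliptic set $S$ one has $A \in \EC{S}^{\circ}$ if and only if there are $\veps, R > 0$ with $t(A-\veps I) \in S$ for all $t \ge R$, and, shrinking $\veps$ if necessary (using $S + \mathcal{P}^{\circ} \subseteq S^{\circ}$ together with $S = \overline{S^{\circ}}$), one may even require $t(A-\veps I) \in S^{\circ}$ for all $t \ge R$. Since $\Theta$, hence also $\widetilde{\Theta}$, is uniformly upper semicontinuous, Proposition~\ref{uusc_cone} tells us that $\EC{\Theta}$ and $\EC{\widetilde{\Theta}}$ are constant on $\overline{\Omega}$; thus we may fix an arbitrary $\bar{x} \in \Omega$ and identify $\EC{\Theta} = \EC{\Theta(\bar{x})}$ and $\EC{\widetilde{\Theta}} = \EC{\widetilde{\Theta}(\bar{x})}$. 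We then feed into \eqref{CM3} the explicit pointwise formulas $\Theta(\bar{x}) = \Phi(\bar{x}) \cap \{A : F(\bar{x},A) \ge 0\}$ from \eqref{def_branch1} and $[\widetilde{\Theta}(\bar{x})]^{\circ} = [\widetilde{\Phi}(\bar{x})]^{\circ} \cup \{A : F(\bar{x},-A) < 0\}$ from \eqref{calc_dual4}.

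For \eqref{ENC1}, the inclusion ``$\subseteq$'' is immediate: if $A \in \EC{\Theta}^{\circ}$, the matrices $t(A-\veps I)$ provided by \eqref{CM3} lie in $\Theta(\bar{x}) \subseteq \Phi(\bar{x})$ for $t \ge R$, which by \eqref{CM3} applied to $\Phi(\bar{x})$ gives $A \in \EC{\Phi(\bar{x})}^{\circ}$, while $F(\bar{x}, t(A-\veps I)) \ge 0$ for $t \ge R$ is exactly the stated sign condition. For ``$\supseteq$'', let $A \in \EC{\Phi(\bar{x})}^{\circ}$, so by \eqref{CM3} there are $\veps_1, R_1 > 0$ with $t(A-\veps_1 I) \in \Phi(\bar{x})$ for $t \ge R_1$, and let $F(\bar{x}, C(A-\veps_0 I)) \ge 0$ for $C \ge R_0$. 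Setting $\veps := \min\{\veps_0,\veps_1\}$, the inclusion $\Phi(\bar{x}) + \mathcal{P} \subseteq \Phi(\bar{x})$ gives $t(A-\veps I) \in \Phi(\bar{x})$ for $t \ge R_1$, and since $t(A-\veps I) \ge t(A-\veps_0 I)$ the degenerate ellipticity \eqref{deg_ell} of $F$ on $\Phi(\bar{x})$ transfers the sign condition to $t(A-\veps I)$; hence $t(A-\veps I) \in \Theta(\bar{x})$ for $t$ large and $A \in \EC{\Theta}^{\circ}$ by \eqref{CM3}.

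For \eqref{ENC2} one runs the same scheme with the elliptic set $\widetilde{\Theta}(\bar{x})$, this time using the union \eqref{calc_dual4} for $[\widetilde{\Theta}(\bar{x})]^{\circ}$: testing the ray $\{t(A-\veps I)\}_{t\ge R}$ against $[\widetilde{\Phi}(\bar{x})]^{\circ} \cup \{A : F(\bar{x},-A) < 0\}$, either the ray eventually lies in $[\widetilde{\Phi}(\bar{x})]^{\circ}$, which by \eqref{CM3} for $\widetilde{\Phi}(\bar{x})$ gives $A \in \EC{\widetilde{\Phi}}(\bar{x})^{\circ}$, or else $F(\bar{x},-t(A-\veps I)) < 0$ for all large $t$, which after renaming $t \mapsto C$ is membership in the second set in \eqref{ENC2}; the converse inclusion is read off directly from \eqref{CM3} and \eqref{calc_dual4}, and alternatively the whole of \eqref{ENC2} follows from \eqref{ENC1} by elliptic duality, since forming the associated cone commutes with the dual. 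The delicate point is the transfer-of-sign step in the reverse inclusion of \eqref{ENC1}, together with its analogue for \eqref{ENC2} (that the ray cannot oscillate between the two sets in the union): because \eqref{deg_ell} only yields monotonicity of $F(\bar{x},\cdot)$ on $\Phi(\bar{x})$, one must ensure the matrices being compared both lie in $\Phi(\bar{x})$, which is why the radius in the $F$-condition is first shrunk down to the radius $\veps_1$ at which membership of the whole ray in $\Phi(\bar{x})$ is guaranteed; everything else is a routine manipulation of elliptic sets using $S = \overline{S^{\circ}}$ and $S + \mathcal{P}^{\circ} \subseteq S^{\circ}$.
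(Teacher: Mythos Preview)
Your argument follows the paper's route exactly: fix $\bar x$ via Proposition~\ref{uusc_cone}, then unpack $\EC{\Theta(\bar x)}^\circ$ and $\EC{\widetilde{\Theta}(\bar x)}^\circ$ using the characterization \eqref{CM3} of the interior of an associated cone together with the pointwise formulas \eqref{def_branch1} and \eqref{calc_dual4}. The paper records the equivalence \textit{i)}--\textit{iii)} and then simply writes down the resulting chains of equalities; you go one step further and try to justify the last equality in each chain, and that is where a genuine issue surfaces.

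In the reverse inclusion of \eqref{ENC1}, when $\veps_0>\veps_1$ you set $\veps=\veps_1$ and want $F(\bar x,t(A-\veps_1 I))\ge0$, which you deduce from $F(\bar x,t(A-\veps_0 I))\ge0$ and $t(A-\veps_1 I)\ge t(A-\veps_0 I)$ via \eqref{deg_ell}. But \eqref{deg_ell} applies only to pairs \emph{both} lying in $\Phi(\bar x)$, and $t(A-\veps_0 I)$ need not lie there; your concluding remark about ``shrinking the radius in the $F$-condition down to $\veps_1$'' is exactly this unjustified move. Similarly, in the forward inclusion of \eqref{ENC2} you know only that $C(A-\veps I)\in[\widetilde{\Phi}(\bar x)]^\circ\cup\{B:F(\bar x,-B)<0\}$ for each $C\ge R$, from which one cannot directly conclude that one of the two alternatives holds for \emph{all} large $C$; the ``no oscillation'' claim is stated, not proved, and the proposed shortcut via elliptic duality is not carried out (the quantifier structure on the two sides does not match under naive dualization). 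The paper's own proof is silent at precisely these two passages, so your write-up is at the same level of rigor as the original; but the subtlety you yourself flag is not actually resolved by the argument you give.
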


\begin{proof} The following equivalence, which holds for every $\Phi \in \mathcal{E}$, will be used throughout the proof:
\begin{itemize}
\item[\textit{i)}] $A \in \EC{\Phi}^{\circ}$.
\item[\textit{ii)}] There exist $\epsilon, R > 0$ such that $C(A-\epsilon I) \in \Phi$ for all $C \ge R$.
\item[\textit{iii)}] There exist $\hat{\epsilon}, R > 0$ such that $C(A-\hat{\epsilon}I) \in \Phi^\circ$ for all $C \ge R$.
\end{itemize}
The equivalence between \textit{i)} and \textit{ii)} is proved in Corollary 5.10 of \cite{HL09} and \textit{iii)} easily implies \textit{ii)}. Suppose now that \textit{ii)} holds and let $\epsilon, R > 0$ be such that $C(A-\epsilon I) \in \Phi$ for all $C \ge R$. Then, $C(A-\epsilon/2 \, I) = C(A-\epsilon I) + C \epsilon/2 \, I \in \Phi + \Pd^\circ \subseteq \Phi^\circ$ for all $C \ge R >0$, which is \textit{iii)}.

By means of Proposition \ref{uusc_cone} the associated elliptic cone maps $\EC{\Theta}$, $\EC{\widetilde{\Theta}}$ are constant in $\overline{\Omega}$, so $\EC{\Theta} = \EC{\Theta(\bar{x})}$ and $\EC{\ThetaD} = \EC{\ThetaD(\bar{x})}$ for any $\bar{x} \in \overline{\Omega}$. Hence, by \textit{ii)} one concludes that
\begin{align*}
\EC{\Theta}^\circ  =  \EC{\Theta(\bar{x})}^\circ = & \{ A \in \Ss(N) : \text{$\exists \, \epsilon, R > 0$
                    s.t. $C(A-\epsilon I) \in \Phi(\bar{x})$} \\
                & \quad \text{and $F(\bar{x}, C(A - \epsilon I)) \ge 0, \ \forall \ C \ge R$}  \} \\
                = & \{ A \in \EC{\Phi(\bar{x})}^\circ : \text{$\exists \, \epsilon, R > 0$ s.t. $F(\bar{x}, C(A - \epsilon I)) \ge 0, \ \forall \ C \ge R$}  \}.
\end{align*}

For $\EC{\widetilde{\Theta}}^{\circ}$, one uses \textit{iii)} and the identity $\ThetaD(\bar{x})^\circ = \widetilde{\Phi}(\bar{x})^\circ \cup \{A \in \Ss(N) : F(\bar{x}, -A) < 0 \}$, which follows from \eqref{calc_dual4}. In fact, one finds
\begin{align*}
\EC{\ThetaD}^\circ = \EC{\ThetaD(\bar{x})}^\circ = & \{ A \in \Ss(N) : \text{$\exists \epsilon, R > 0$
                    s.t. $C(A-\epsilon I) \in \widetilde{\Phi}(\bar{x})^\circ$} \\
                    & \quad \text{or $F(\bar{x}, -C(A - \epsilon I)) < 0, \ \forall \ C \ge R$}  \} \\
    = & \EC{\widetilde{\Phi}}(\bar{x})^\circ \cup \{ A \in \Ss(N) : \text{$\exists \epsilon, R > 0$ s.t. $F(\bar{x}, C(-A + \epsilon I)) < 0, \ \forall \ C \ge R$}  \}.
\end{align*}

\end{proof}

The following example gives a class of equations for which the above considerations are valid, where we denote by $\UC(\Omega, \R)$ the space of uniformly continuous real valued functions on $\Omega$.

\begin{exe}\label{exe:FGf} Let $\Phi \in \mathcal{E}, f \in \UC(\Omega, \R)$ and $G \in C(\Ss(N), \R)$ be such that the following three conditions hold:
\begin{equation}\label{GDE}
    \mbox{ $G(A ) \geq G(B) $ for each $A, B \in \Phi$ such that $A \geq B$, }
\end{equation}
\begin{equation}\label{G_strict}
\mbox{ $\forall x \in \Omega$ there exists $A \in \Phi$ such that $G(A) = f(x),$}
\end{equation}
and there exists $r^* > 0$ so that
\begin{equation}\label{GNTD}
    \mbox{ $G(A + rI) \geq G(A) + \beta(r)$ for each $r \in (0, r^*]$ and each $A \in \Phi$ }
\end{equation}
for some function $\beta: (0, r^*] \to (0, +\infty)$. Then the function $F \in C(\Omega \times \Ss(N), \R)$ defined by
$$
    F(x,A):= G(A) - f(x), \ \ x \in \Omega, A \in \Ss(N)
$$
satisfies the hypotheses of Proposition \ref{UCbranch}.

Indeed, consider the constant elliptic map taking the value $\Phi \in \mathcal{E}$, which is uniformly upper semicontinuous. Clearly $F$ is continuous and satisfies \eqref{deg_ell} with respect to $\Phi$ since $G$ satisfies \eqref{GDE}. The condition \eqref{F_strict} in this case is precisely \eqref{G_strict}. It remains only to verify that \eqref{UCF} holds for each small $\veps > 0$. Using \eqref{GNTD} one finds
\begin{align*}
F(y, A + \veps I) - F(x, A) & = G(A + \veps I) - G(A) + f(x) - f(y) \nonumber \\
    & \geq \beta(\veps) - \omega_f(|x - y|),
\end{align*}
where $\beta(\veps) > 0$ for each $\veps \in (0, r^*]$ and $\omega_f$ is a modulus of continuity for $f \in \UC(\Omega, \R)$. Hence one has \eqref{UCF} for each $\veps \in (0, r^*]$ by taking $\delta =\delta(\veps)$ so that $\omega_f(\delta) \leq \beta(\veps)$, which one can do since $\omega_f(\delta) \to 0^+$ as $\delta \to 0^+$. This completes the claim that Proposition \ref{UCbranch} applies to Example \ref{exe:FGf}.

Sufficient conditions for the requirement \eqref{G_strict} are not difficult to find. For example, since elliptic sets $\Phi$ are connected, $G(\Phi)$ is a (possibly unbounded) interval while $f(\Omega)$ is a bounded interval. Hence if $G$ is bounded from below on $\Phi$ and there exists a sequence $\{A_k\} \subset \Phi$ such that
\begin{equation}\label{G_BBC}
 \mbox{ $G\geq m > - \infty$  on $\Phi$ and  $\exists \ \{A_k\} \subset \Phi$ such that $G(A_k) \to + \infty$ as $k \to + \infty$,}
\end{equation}
then $G(\Phi) = [m, +\infty)$ for some $m \in \R$ and \eqref{GNTD} holds for each $f \in \UC(\Omega, \R)$ with $f \geq m$ on $\Omega$.

As for the needed boundary convexity, if $F$ has the form $F(x,A) = G(A) - f(x)$ and $\Phi$ is itself an elliptic cone, so that $\EC{\Phi} = \Phi$, it is possible to characterize further $\EC{\Theta}$ and $\EC{\widetilde{\Theta}}$ in terms of $G$. Let $x \in \partial \Omega$ and let $d = d_{\partial \Omega}$ be the distance function from $\partial \Omega$, which is set to be negative in $\Omega$ and positive outside. This distance function will be of class $C^2$ if $\partial \Omega$ is (see e.g. Foote \cite{Fo84}). Then, a sufficient condition for $\partial \Omega$ to be strictly $\EC{\Theta}$-convex in $x \in \Omega$ is that there exist $\alpha \in \R$ and $\epsilon >0$ such that \footnote{Here $v \otimes w$ is the {\em outer product} of $v,w \in \R^N$ which is the $N \times N$ matrix with entries $(v \otimes w)_{ij}= v_i w_j = (vw^T)_{ij}$.}
\begin{equation}\label{G_convex}
\left\{
\begin{array}{l}
\text{$G(C(D^2 d(x) + \alpha D d \otimes Dd (x) - \epsilon I)) \rightarrow +\infty$ as $C \rightarrow +\infty$} \\
\text{and $ D^2 d(x) + \alpha D d \otimes Dd (x) \in {\Phi}^\circ$}.
\end{array}
\right.
\end{equation}
Indeed, suppose that \eqref{G_convex} holds. Hence, $G(C(D^2 d(x) + \alpha D d \otimes Dd (x) - \epsilon I)) \ge f(x)$ for all $C \ge R$ and for some $\alpha \in \R$, $\epsilon, R > 0$, so $D^2 d(x) + \alpha D d \otimes Dd (x) = B \in \EC{\Theta}^{\circ}$ by Proposition \ref{natural_cones}. We now observe that $B|_{T_x \partial \Omega} = D^2 d(x)|_{T_x \partial \Omega}$ since $Dd(x) \in N_x \partial \Omega$. Hence $\partial \Omega$ is strictly $\EC{\Theta}$-convex in $x \in \Omega$, being $d$ a local defining function for $\partial \Omega$ in $x$.

Similarly, a sufficient condition for $\partial \Omega$ to be strictly $\EC{\widetilde{\Theta}}$-convex in $x \in \Omega$ is that there exist $\alpha \in \R$ and $\epsilon >0$ such that
\begin{equation*}
\left\{
\begin{array}{l}
\text{$G(C(-D^2 d(x) + \alpha D d \otimes Dd (x) + \epsilon I)) \rightarrow -\infty$ as $C \rightarrow +\infty$}, \\
\text{or $D^2 d(x) + \alpha D d \otimes Dd (x) \in {\widetilde{\Phi}}^\circ$}.
\end{array}
\right.
\end{equation*}

\end{exe}

\begin{rem}\label{SC_NTD}
For $F(x,A) = G(A) - f(x)$, we point out that the condition \eqref{GNTD} which implies the structural condition \eqref{UCF} resembles the {\em non-totally degenerate} condition of Bardi-Mannucci \cite{BaMa06} which is used (together with standard assumptions from the classical viscosity theory) to prove the validity of the comparison principle for equations which can also depend on $(u,Du)$. For equations without this dependence their condition becomes
$$
\mbox{$ \exists \eta > 0$ such that $F(x, A + rI) - F(x,A) \geq \eta r$ for each $r > 0$},
$$
which is \eqref{GNTD} for $F(x,A) = G(A) - f(x)$ and a linear $\beta(r) = \eta r$. For this reason, we will refer to \eqref{GNTD} as a non-total degeneracy condition and it gives an alternative to the standard structural condition placed on $F$ in order to obtain the comparison principle by way of the so-called Theorem on Sums (see \eqref{caba1}-\eqref{caba2} and Example \ref{SC_CAB} below).
\end{rem}

We finally recall that if the branch condition \eqref{def_branch2} holds, Proposition \ref{SHCVS} shows that $\Theta$-harmonic functions will be $\Phi$-admissible viscosity solutions of \eqref{FNE}, and therefore one obtains existence and uniqueness for the associated Dirichlet problem for an elliptic branch of the PDE. The next subsection considers come concrete examples.

\subsection{Equations involving $\lambda_k(D^2 u)$ and their perturbations.}

Concrete examples of $G \in C(\Ss(N), \R), \Phi \in \mathcal{E}$ and $f \in \UC(\Omega, \R)$ which satisfy the needed conditions \eqref{GDE}, \eqref{G_strict} and \eqref{GNTD} are provided by inhomogeneous Monge-Amp\`ere equations and the prescribed $k$-th eigenvalue equation. More precisely, for
\begin{equation}\label{NHMA}
F(x,A) = G(A) - f(x) = {\rm det}(A) - f(x), \quad f \ge 0,
\end{equation}
and
\begin{equation}\label{PkthE}
F(x,A) = G(A) - f(x) = \lambda_k(A) - f(x),\quad k=1, \ldots, N,
\end{equation}
one has the following results.

\begin{thm}\label{thm:NHMA} Let $\Omega \subset \R^N$ be a bounded and strictly convex domain with $\partial \Omega$ of class $C^2$, $f \in \UC(\Omega, \R)$ be a non-negative function, and let
$$
    \Theta_{{\rm MA}} = \Theta_{{\rm MA}}(x) := \{ A \in \Pd:  {\rm det}(A) - f(x) \geq 0 \}.
$$
Then, for each $\varphi \in C(\partial \Omega)$, there exists a unique $u \in C(\overline{\Omega})$ which is $\Theta_{{\rm MA}}$-harmonic in the sense of Definition \ref{def:Weak_SH} and such that  $u = \varphi$ on $\partial \Omega$. Moreover, $u$ is a $\Pd$-admissible viscosity solution (see Definition \ref{Vs_def}) of ${\rm det}(D^2 u(x)) - f(x) = 0$.
\end{thm}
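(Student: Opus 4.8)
The plan is to realize $\Theta_{{\rm MA}}$ as the elliptic branch produced by Proposition \ref{pick_branch} from $F(x,A)=\det A-f(x)$ and the constant elliptic map $\Phi\equiv\Pd$, to verify that this branch is uniformly upper semicontinuous and that $\partial\Omega$ has the convexity demanded by Theorem \ref{thm:EU}, and then to convert the resulting $\Theta_{{\rm MA}}$-harmonic solution into a $\Pd$-admissible viscosity solution by Proposition \ref{SHCVS}. So the work reduces to three checks. First, the structural hypotheses of Proposition \ref{UCbranch}: I would apply Example \ref{exe:FGf} with $G(A)=\det A$. Degenerate ellipticity \eqref{GDE} of $\det$ on $\Pd$ is classical; \eqref{G_strict} holds since $\det(\Pd)=[0,+\infty)$ (case \eqref{G_BBC} with $m=0$) and $f\geq 0$; and the non-total degeneracy \eqref{GNTD} follows from
\[
\det(A+rI)=\prod_{i=1}^N\bigl(\lambda_i(A)+r\bigr)\ \geq\ \prod_{i=1}^N\lambda_i(A)+r^N\ =\ \det A+r^N,\qquad A\in\Pd,\ r>0,
\]
because the expansion of the product has only non-negative terms, so one may take $\beta(r)=r^N$. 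Then Example \ref{exe:FGf} gives that $F$ satisfies Proposition \ref{UCbranch}, whence $\Theta_{{\rm MA}}$ — which is precisely \eqref{def_branch1} for this $F$ and $\Phi=\Pd$ — is a uniformly upper semicontinuous elliptic map extending to $\overline{\Omega}$.

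Second, the elliptic cones. Using $\EC{\Pd}=\Pd$ from \eqref{cone_exs}, Proposition \ref{natural_cones} gives $A\in\EC{\Theta_{{\rm MA}}}^\circ$ iff $A\in\Pd^\circ$ and $C^N\det(A-\epsilon I)\geq f(\bar x)$ for all large $C$ and some $\epsilon>0$; for $A\in\Pd^\circ$ one picks $\epsilon<\lambda_1(A)$ so that $\det(A-\epsilon I)>0$ and the inequality holds for $C$ large, and together with the constraint $A\in\Pd^\circ$ already present in \eqref{ENC1} this yields $\EC{\Theta_{{\rm MA}}}^\circ=\Pd^\circ$, hence $\EC{\Theta_{{\rm MA}}}=\Pd$ by \eqref{closure_interior}. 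Thus strict $\EC{\Theta_{{\rm MA}}}$-convexity of $\partial\Omega$ is exactly strict convexity of $\Omega$, the standing hypothesis. For the dual cone I would avoid an explicit computation: $\Theta_{{\rm MA}}(x)\subseteq\Pd$ gives $\widetilde{\Pd}\subseteq\widetilde{\Theta_{{\rm MA}}}(x)$, and since the associated-cone map is monotone and constant (Proposition \ref{uusc_cone}, with $\EC{\widetilde{\Pd}}=\widetilde{\Pd}$) one obtains $\widetilde{\Pd}\subseteq\EC{\widetilde{\Theta_{{\rm MA}}}}$. Every $C^2$ boundary is strictly $\widetilde{\Pd}$-convex — for a local defining function $\rho$ at $x$ with unit outer normal $\nu$, the matrix $B=D^2\rho(x)+t\,\nu\otimes\nu$ has $B|_{T_x\partial\Omega}=D^2\rho(x)|_{T_x\partial\Omega}$ and $\lambda_N(B)>0$ for $t$ large, so $B\in\widetilde{\Pd}^\circ$ — hence $\partial\Omega$ is also strictly $\EC{\widetilde{\Theta_{{\rm MA}}}}$-convex. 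Theorem \ref{thm:EU} then yields the unique $\Theta_{{\rm MA}}$-harmonic $u\in C(\overline{\Omega})$ with $u=\varphi$ on $\partial\Omega$.

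Third, the viscosity statement via Proposition \ref{SHCVS}: \eqref{deg_ell} and \eqref{F_strict} are in place, and from \eqref{EBPhi2} one has $\partial\Theta_{{\rm MA}}(x)=\bigl[\partial\Pd\cap\{\det\geq f(x)\}\bigr]\cup\bigl[\Pd\cap\{\det=f(x)\}\bigr]$; on $\partial\Pd$ we have $\lambda_1=0$, so $\det=0$ and $F=-f(x)\leq 0$, while $F=0$ on the second piece, which is the branch condition \eqref{def_branch2}. For \eqref{NDC}, any $A\in\Theta_{{\rm MA}}(x)^\circ$ lies in $\Pd^\circ$ and has $\det A>f(x)$ (a matrix with $\det A=f(x)$ cannot be interior, by continuity of $\det$), so $F(x,A)>0$. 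Proposition \ref{SHCVS}(a)--(b) then identifies $\TSH(\Omega)$-functions with $\Pd$-admissible viscosity subsolutions and $\TSHD(\Omega)$-functions with $\Pd$-admissible viscosity supersolutions, so the $\Theta_{{\rm MA}}$-harmonic $u$ is a $\Pd$-admissible viscosity solution of $\det(D^2u)-f=0$. No step is a serious obstacle; the single Monge--Amp\`ere-specific ingredient is the estimate $\det(A+rI)\geq\det A+r^N$ on $\Pd$, and the only point demanding care is the degenerate range $f(x)=0$, where $\Theta_{{\rm MA}}(x)=\Pd$ — this must be permitted throughout, which is exactly why the hypothesis reads $f\geq 0$ and not $f>0$.
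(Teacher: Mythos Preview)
Your proof is correct and follows essentially the same route as the paper: verify the hypotheses of Example~\ref{exe:FGf} (with the key estimate $\det(A+rI)\geq\det A+r^N$ on $\Pd$), invoke Proposition~\ref{UCbranch} for uniform upper semicontinuity, identify the cones, apply Theorem~\ref{thm:EU}, and then Proposition~\ref{SHCVS}. The only visible difference is in the dual cone step: the paper asserts the equality $\EC{\widetilde{\Theta}}_{{\rm MA}}=\widetilde{\Pd}$ directly from \eqref{ENC1}--\eqref{ENC2}, while you instead observe the inclusion $\widetilde{\Pd}\subseteq\EC{\widetilde{\Theta_{{\rm MA}}}}$ via monotonicity of the associated-cone construction and then note that every $C^2$ boundary is strictly $\widetilde{\Pd}$-convex; this is a perfectly valid shortcut and is all that is needed.
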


\begin{thm}\label{thm:PkthE} Let $\Omega \subset \R^N$ be a bounded and strictly $\Pd_{k \wedge (N-k+1)}$-convex\footnote{$\Pd_{k \wedge (N-k+1)} := \{A \in \Ss(N) : \lambda_k (A) \ge 0 \text{ and } \lambda_{N-k+1} (A) \ge 0\}$, $k=1, \ldots, N$. } domain with $\partial \Omega$ of class $C^2$, $f \in \UC(\Omega, \R)$ and let
$$
    \Theta_{k} = \Theta_{k}(x) := \{ A \in \Ss(N):  \lambda_k(A) - f(x) \geq 0 \}.
$$
Then, for each $\varphi \in C(\partial \Omega)$, there exists a unique $u \in C(\overline{\Omega})$ which is $\Theta_{k}$-harmonic in the sense of Definition \ref{def:Weak_SH} and such that  $u = \varphi$ on $\partial \Omega$. Moreover, $u$ is a viscosity solution in the standard sense of $\lambda_k(D^2 u(x)) - f(x) = 0$. 
\end{thm}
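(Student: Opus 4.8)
The plan is to verify that Theorem~\ref{thm:PkthE} follows by applying the abstract existence-uniqueness result Theorem~\ref{thm:EU} to the elliptic map $\Theta_k$, together with the equivalence between $\Theta_k$-harmonic functions and viscosity solutions (Proposition~\ref{SHCVS}). The ingredients to check are: (i) $\Theta_k$ is a well-defined elliptic map, here taking $\Phi(x) \equiv \Ss(N)$; (ii) $\Theta_k$ is uniformly upper semicontinuous on $\overline{\Omega}$; (iii) the branch condition \eqref{def_branch2} and the non-degeneracy condition \eqref{NDC} hold, so that the coherence with viscosity solutions applies; and (iv) strict $\Pd_{k\wedge(N-k+1)}$-convexity of $\partial\Omega$ is exactly the strict $\EC{\Theta_k}$- and $\EC{\widetilde{\Theta_k}}$-convexity demanded by Theorem~\ref{thm:EU}.

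For (i) and (ii), the cleanest route is to invoke Example~\ref{exe:FGf} with $\Phi \equiv \Ss(N)$, $G(A) = \lambda_k(A)$ and the given $f \in \UC(\Omega,\R)$. One checks the three hypotheses of that example: degenerate ellipticity \eqref{GDE} holds because $A \mapsto \lambda_k(A)$ is monotone on all of $\Ss(N)$ (if $A \ge B$ then $\lambda_k(A)\ge\lambda_k(B)$ by the min-max characterization of eigenvalues); condition \eqref{G_strict} holds because $\lambda_k(\Ss(N)) = \R \supset f(\Omega)$ (take $A = f(x)I$, for instance); and the non-total degeneracy \eqref{GNTD} holds with $\beta(r) = r$ since $\lambda_k(A + rI) = \lambda_k(A) + r$. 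Hence Proposition~\ref{UCbranch} applies and $\Theta_k$ extends to a uniformly upper semicontinuous elliptic map on $\overline{\Omega}$, so Theorem~\ref{thm:CP} gives the comparison principle. For (iii), the branch condition \eqref{def_branch2} reduces to checking $\partial\Theta_k(x) \subset \{A : \lambda_k(A) - f(x) \le 0\}$; since $\lambda_k$ is continuous, $\partial\Theta_k(x) = \{A : \lambda_k(A) = f(x)\}$, so in fact $\lambda_k(A) - f(x) = 0$ there and \eqref{def_branch2} holds with equality. Likewise \eqref{NDC} holds because on $\Theta_k(x)^\circ = \{A : \lambda_k(A) > f(x)\}$ one has $F(x,A) = \lambda_k(A) - f(x) > 0$. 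Therefore Proposition~\ref{SHCVS} gives that $\Theta_k$-harmonic functions are precisely $\Ss(N)$-admissible viscosity solutions, i.e.\ viscosity solutions in the standard sense (as noted right after Definition~\ref{Vs_def}, when $\Phi \equiv \Ss(N)$ the admissibility restriction is vacuous).

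The main work is step (iv): identifying the elliptic cones $\EC{\Theta_k}$ and $\EC{\widetilde{\Theta_k}}$. Here I would use Proposition~\ref{uusc_cone}: since $\Theta_k$ is uniformly upper semicontinuous, $\EC{\Theta_k}$ is constant, and moreover if $\Theta_k(x)$ is itself a cone for some $x$ then $\EC{\Theta_k} = \Theta_k(x)$. Taking $x$ with $f(x) = 0$ (or directly computing) one finds $\Theta_k(x) = \{A : \lambda_k(A) \ge 0\}$, which is a cone; hence $\EC{\Theta_k} = \{A : \lambda_k(A) \ge 0\} =: \Pd_k$. For the dual, one computes from \eqref{nonnegative}-type reasoning, or from Proposition~\ref{natural_cones}, that $\widetilde{\Theta_k}(x) = \{A : \lambda_{N-k+1}(A) \ge -f(x)\}$ — this uses the duality identity $\widetilde{\{A : \lambda_k(A) \ge c\}} = \{A : \lambda_{N-k+1}(A) \ge -c\}$, which follows because $-[\{\lambda_k \ge c\}^\circ]^c = \{A : -A \notin \{\lambda_k > c\}\} = \{A : \lambda_k(-A) \le c\} = \{A : -\lambda_{N-k+1}(A) \le c\}$. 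Thus $\EC{\widetilde{\Theta_k}} = \{A : \lambda_{N-k+1}(A) \ge 0\} = \Pd_{N-k+1}$. Consequently the hypothesis ``$\partial\Omega$ strictly $\EC{\Theta_k}$- and $\EC{\widetilde{\Theta_k}}$-convex'' becomes strict $\Pd_k$-convexity and strict $\Pd_{N-k+1}$-convexity, and requiring both simultaneously is precisely strict $\Pd_{k\wedge(N-k+1)}$-convexity as defined in the footnote, since $\Pd_k \cap \Pd_{N-k+1} = \Pd_{k\wedge(N-k+1)}$ and the interior of this intersection being nonempty at the boundary along tangent directions encodes both convexity conditions. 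I expect this last bookkeeping — carefully matching the intersection of the two cone-convexity conditions with the single hypothesis in the statement, and verifying the interiors behave correctly — to be the only genuinely delicate point; everything else is a direct citation of the machinery already built. Finally, invoking Theorem~\ref{thm:EU} yields the unique $\Theta_k$-harmonic $u \in C(\overline{\Omega})$ with $u = \varphi$ on $\partial\Omega$, which by the above is the desired viscosity solution, completing the proof.
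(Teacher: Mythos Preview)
Your proposal is essentially correct and follows the same architecture as the paper's proof: apply Example~\ref{exe:FGf} with $G(A)=\lambda_k(A)$, $\Phi=\Ss(N)$, $\beta(r)=r$ to get uniform upper semicontinuity of $\Theta_k$ via Proposition~\ref{UCbranch}; verify \eqref{def_branch2} and \eqref{NDC}; identify the associated cones; then invoke Theorem~\ref{thm:EU} and Proposition~\ref{SHCVS}.

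There is one genuine slip in step (iv). You propose to identify $\EC{\Theta_k}$ by invoking the second assertion of Proposition~\ref{uusc_cone}, ``taking $x$ with $f(x)=0$'' so that $\Theta_k(x)=\{A:\lambda_k(A)\ge 0\}$ is already a cone. But $f\in\UC(\Omega,\R)$ is arbitrary and need not vanish anywhere, so this step can fail as written. Your parenthetical ``(or directly computing)'' is the correct escape hatch, and indeed the paper takes exactly that route: it computes $\EC{\Theta_k}^\circ$ and $\EC{\widetilde{\Theta}_k}^\circ$ via the characterization \eqref{ENC1}--\eqref{ENC2} of Proposition~\ref{natural_cones}, which works for any fixed $\bar{x}\in\Omega$ regardless of the value of $f(\bar{x})$. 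Concretely, $\lambda_k(C(A-\epsilon I))\ge f(\bar{x})$ for all large $C$ rewrites as $\lambda_k(A)\ge\epsilon+f(\bar{x})/C$, which holds for some $\epsilon>0$ and all large $C$ iff $\lambda_k(A)>0$; this gives $\EC{\Theta_k}^\circ=\Pd_k^\circ$ directly. Your explicit duality computation $\widetilde{\Theta_k}(x)=\{A:\lambda_{N-k+1}(A)\ge -f(x)\}$ is correct and gives an alternative route to $\EC{\widetilde{\Theta}_k}=\Pd_{N-k+1}$, which the paper only says ``can be verified in an analogous way''. The final bookkeeping that strict $\Pd_{k\wedge(N-k+1)}$-convexity is equivalent to strict $\Pd_k$- and $\Pd_{N-k+1}$-convexity is straightforward once you note that one of $\Pd_k,\Pd_{N-k+1}$ contains the other (the one with the smaller index is smaller), so their intersection is simply $\Pd_{\min(k,N-k+1)}$.
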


\begin{proof}[Proofs of Theorems \ref{thm:NHMA} and \ref{thm:PkthE}] 

It suffices to apply the abstract theorem (Theorem \ref{thm:EU}) and Proposition \ref{SHCVS}. To do so, we follow the preceding discussion of this section. In particular, let $G$ be as in \eqref{NHMA}, then it is clearly a continuous function on $\Ss(N)$ and monotone non-decreasing on $\Phi := \Pd$. Moreover, $G$ satisfies $G \geq 0$ on $\mathcal{P}$,  $G (k\, I) \rightarrow +\infty$ as $k \rightarrow \infty$ and
\begin{equation}\label{MAnondeg}
    \mbox{ $G(A + rI) =\det(A + rI) \geq \det(A) + r^N =: G(A) + \beta(r)$ for all $r  > 0, A \in \Phi$, }
\end{equation}
so the conditions \eqref{GDE}, \eqref{G_BBC} and \eqref{GNTD} of Example \ref{exe:FGf} hold. Proposition \ref{UCbranch} then applies, guaranteeing that $\Theta_{{\rm MA}}$ extends to a uniformly upper semicontinuous elliptic map on $\overline{\Omega}$. Using \eqref{ENC1}-\eqref{ENC2} and \eqref{cone_exs}, it is easily verified that $\EC{\Theta}_{{\rm MA}} = \Pd \subset \dPd = \EC{\widetilde{\Theta}}_{{\rm MA}}$, and strict $\Pd$-convexity of $\Omega$ is equivalent to the standard notion of strict convexity, so the existence of a unique $\Theta_{{\rm MA}}$-harmonic function satisfying the boundary data follows by Theorem \ref{thm:EU}. Finally, $u$ is $\Pd$-admissible viscosity solution of the equation by a direct application of Proposition \ref{SHCVS}. Indeed, $f(x) \ge 0$ for all $x \in \Omega$, so $\partial \Theta_{{\rm MA}}(x) = \{A \in \Pd : \det(A) - f(x) = 0\}$ and \eqref{def_branch2} follows. The non-degeneracy condition \eqref{NDC} is a consequence of \eqref{MAnondeg}.

 The proof of Theorem \ref{thm:PkthE} with $F$ defined by \eqref{PkthE} can be carried out in the same way. The only difference is that one may allow $\Phi$ to be all of $\Ss(N)$, since $G(A) = \lambda_k(A)$ is monotone non-decreasing on $\Ss(N)$. This creates no problem for the ellipticity of $\Theta = \Theta_k$ in Proposition \ref{pick_branch} since $\Phi = \Ss(N)$ lacks being an elliptic set only because it is not a proper subset, but this condition is asked only to ensure that $\Theta$ defined by \eqref{def_branch1} is proper. Here $\Theta_k$ is clearly a proper subset by definition. Similarly, one easily checks that having $\Phi = \Ss(N)$ creates no problems for Propositions \ref{UCbranch} and \ref{natural_cones}. The characterization of $\EC{\Theta_k}^\circ$, $\EC{\widetilde{\Theta}_k}^\circ$ follows directly by \eqref{ENC1}-\eqref{ENC2}. Indeed, for any $\bar{x} \in \Omega$,
\begin{align*}
\EC{\Theta_k}^{\circ} & = \{ A \in \Ss(N) : \text{$\exists \epsilon, R > 0$ s.t. $\lambda_k (C(A-\epsilon I)) - f(\bar{x}) \ge 0$ for all $C \ge R$}  \}  \\
& = \{ A \in \Ss(N) : \text{$\exists \epsilon, R > 0$ s.t. $\lambda_k (A) \ge \epsilon+ {f(\bar{x})}{C^{-1}}$ for all $C \ge R$}  \} \\
& = \{ A \in \Ss(N) : \text{$\lambda_k (A) > 0$}  \}  = \Pd_k^\circ.
\end{align*}
The equality $\EC{\widetilde{\Theta}_k}^\circ = \Pd_{N-k+1}^\circ$ can be verified in an analogous way.
\end{proof}
Notice that $\Theta_k$ need not be convex, as noted in the introduction. For example, if $N=2$ then with
$$
\mbox{ $A_1 = \left[ \begin{array}{rr} -1 & 0 \\ 0 & 0 \end{array} \right]$ \quad and $A_2 = \left[ \begin{array}{rr} 0 & 0 \\ 0 & -1 \end{array} \right]$,}
$$
clearly $\lambda_2(A_k) = 0$ for $k=1,2$ while $\lambda_2(tA_1 + (1-t)A_2) < 0$ for each $t \in (0,1)$.

We next consider examples of perturbing the equation generated by \eqref{NHMA}. These examples are chosen to indicate where classical viscosity approaches may have difficulty in being applied and yet the methods developed herein are able to operate freely. 

Let $f \in \UC(\Omega, \R)$ and $M \in \UC(\Omega, \Ss(N))$ and define $F \in C(\Omega \times \Ss(N), \R)$ by
\begin{equation}\label{PNHMA}
    F(x,A) = \left[ {\rm det}(A + M(x)) \right]^{1/N} - f(x).
\end{equation}
Consider the uniformly upper semicontinuous elliptic map $\Phi: \Omega \to \mathcal{E}$ defined by
\begin{equation}\label{PMA_Phi}
\Phi(x):= \{ A \in \Ss(N): \ A + M(x) \geq 0 \} = M(x) + \mathcal{P}, \ \ x \in \Omega.
\end{equation}

\begin{prop}\label{PMAcomparison} If $f \geq 0$ on $\Omega$, then the map defined by
$$
    \Theta(x):= \{ A \in \Phi(x): \ F(x,A) =  \left[ {\rm det}(A + M(x)) \right]^{1/N} - f(x) \geq 0 \}, \ \ x \in \Omega.
$$
extends to a uniformly upper semicontinuous elliptic map on $\overline{\Omega}$. Moreover, the comparison principle between $\Theta$-subharmonic and $\Theta$-superharmonic functions holds.
\end{prop}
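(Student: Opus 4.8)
The plan is to verify that the datum $F$ and the elliptic map $\Phi$ of \eqref{PMA_Phi} satisfy the hypotheses of Proposition \ref{UCbranch}; granting this, $\Theta$ is (the restriction of) a uniformly upper semicontinuous elliptic map on $\overline{\Omega}$, its dual $\widetilde{\Theta}$ is then also uniformly upper semicontinuous by Proposition \ref{uuscd}, and the comparison principle between $\Theta$-subharmonic and $\Theta$-superharmonic functions is an immediate application of Theorem \ref{thm:CP}. Thus the whole proof reduces to checking the three structural conditions of Proposition \ref{UCbranch}: that $\Phi$ is a uniformly upper semicontinuous elliptic map, that $F$ satisfies \eqref{deg_ell} and \eqref{F_strict} with respect to $\Phi$, and that $F$ satisfies the uniform non-degeneracy condition \eqref{UCF}.

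First I would record that $\Phi(x) = M(x) + \mathcal{P}$ is a uniformly upper semicontinuous elliptic map on $\Omega$. Ellipticity of each $\Phi(x)$ is inherited from $\mathcal{P}$ under translation by $M(x)$. For the regularity, given $\veps > 0$ choose $\delta$ so that $\|M(x) - M(y)\| < \veps$ whenever $|x-y| < \delta$ (possible since $M \in \UC(\Omega,\Ss(N))$); then for $A = M(x) + P \in \Phi(x)$ with $P \geq 0$ one has $A + \veps I = M(y) + \big[(M(x)-M(y)) + \veps I\big] + P \in M(y) + \mathcal{P} = \Phi(y)$, since $(M(x)-M(y)) + \veps I \geq 0$, which is the inclusion $\Phi(x) + \veps I \subset \Phi(y)$ of \eqref{uusc2} (and symmetrically). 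Condition \eqref{deg_ell} holds because on $\Phi(x)$ the matrix $A + M(x)$ is positive semidefinite, so $A \mapsto \det(A+M(x))$ is non-decreasing there and hence so is $F(x,\cdot)$ after composing with the increasing function $t \mapsto t^{1/N}$; and \eqref{F_strict} holds with $A := f(x)I - M(x) \in \Phi(x)$ (using $f(x) \geq 0$), for which $F(x,A) = [\det(f(x)I)]^{1/N} - f(x) = 0$.

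The crux is the uniform non-degeneracy condition \eqref{UCF}, and here the key tool is the Minkowski determinant inequality $[\det(S+T)]^{1/N} \geq [\det S]^{1/N} + [\det T]^{1/N}$, valid for all $S,T \in \mathcal{P}$. Given $\veps > 0$, I would pick $\delta = \delta(\veps)$ so that $\|M(x)-M(y)\| \leq \veps/2$ and $\omega_f(|x-y|) \leq \veps/2$ when $|x-y| < \delta$, $\omega_f$ being a modulus of continuity for $f$. For $A \in \Phi(x)$, set $S := A + M(x) \geq 0$ and $T := \veps I + (M(y)-M(x))$; then $T \geq (\veps/2)I \geq 0$, so $A + \veps I + M(y) = S + T \geq 0$, and
\[
[\det(A + \veps I + M(y))]^{1/N} \geq [\det S]^{1/N} + [\det T]^{1/N} \geq [\det(A+M(x))]^{1/N} + \tfrac{\veps}{2},
\]
the last inequality using $\det T \geq \det((\veps/2)I) = (\veps/2)^N$. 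Hence $F(y, A+\veps I) - F(x,A) \geq \tfrac{\veps}{2} - (f(y)-f(x)) \geq \tfrac{\veps}{2} - \omega_f(|x-y|) \geq 0$, which is exactly \eqref{UCF}. Proposition \ref{UCbranch} then applies, and Theorem \ref{thm:CP} delivers the comparison principle.

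I expect the only genuinely delicate point to be precisely this last estimate: it is the passage to the $N$-th root that provides the uniform gain $\tfrac{\veps}{2}$ needed to absorb the oscillation of $f$, since for the bare determinant the quantity $\det(S + (\veps/2)I) - \det S$ is not bounded below uniformly in $S \in \mathcal{P}$, so the plain equation $\det(D^2u + M) = f^N$ would not satisfy \eqref{UCF}. This is the non-total-degeneracy phenomenon of Remark \ref{SC_NTD}, now realized (with the $x$-dependence of $M$ folded into the argument of the operator as in Proposition \ref{pick_branch}) with a linear gain $\beta(\veps) = \veps/2$.
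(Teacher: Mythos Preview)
Your proof is correct and follows the same route as the paper's: verify that $\Phi$ is a uniformly upper semicontinuous elliptic map, check \eqref{deg_ell} and \eqref{F_strict}, establish \eqref{UCF}, and then invoke Proposition \ref{UCbranch} and Theorem \ref{thm:CP}. In fact you are more explicit than the paper in two places: you exhibit the concrete witness $A = f(x)I - M(x)$ for \eqref{F_strict} (the paper argues by a range/connectedness observation), and you spell out the Minkowski determinant inequality to quantify the gain in \eqref{UCF} (the paper only writes the difference $[\det(A+\veps I + M(y))]^{1/N} - [\det(A+M(x))]^{1/N}$ and asserts it dominates the oscillation of $f$ for $|x-y|$ small, leaving the mechanism implicit).

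One small correction to your closing commentary: the claim that $\det(S + (\veps/2)I) - \det S$ is \emph{not} bounded below uniformly in $S \in \mathcal{P}$ is false. The very Minkowski inequality you used gives $[\det(S+(\veps/2)I)]^{1/N} \geq (\det S)^{1/N} + \veps/2$, and raising to the $N$-th power (using $(a+b)^N \geq a^N + b^N$ for $a,b \geq 0$) yields $\det(S+(\veps/2)I) \geq \det S + (\veps/2)^N$. This is exactly the estimate \eqref{MAnondeg} the paper uses for the unperturbed Monge--Amp\`ere equation. So the ``plain'' equation $\det(D^2u + M) = f^N$ with $f \geq 0$ would also satisfy \eqref{UCF}, with gain $\beta(\veps) = (\veps/2)^N$; the passage to the $N$-th root is convenient (linear gain) but not essential for the argument. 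This does not affect your proof of the proposition, which is sound.
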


This proposition yields in turn the comparison principle for $\Phi$-admissible viscosity solutions of the partial differential equation defined by \eqref{PNHMA}; that is,
\begin{equation}\label{PMAeq}
     F(x,D^2u) = \left[ {\rm det}(D^2u + M(x)) \right]^{1/N} - f(x) = 0, \ \ f \geq 0,
\end{equation}
which can be verified by combining Theorem \ref{thm:CP} and Proposition \ref{SHCVS}. The corresponding existence result for equation \eqref{PMAeq} can be easily obtained in view of Theorem \ref{thm:EU}, but we omit the details.

\begin{proof}[Proof of Proposition \ref{PMAcomparison}] $\Phi$ is clearly elliptic since each $\Phi(x)$ is a translate in $\Ss(N)$ of the elliptic set $\mathcal{P}$ and the uniform continuity of $M$ yields
$$
\mbox{$A + \veps I + M(y) \geq A + M(x) \geq 0$ for each $A \in \Phi(x)$ and $x,y \in \Omega$ with $|x-y| < \delta_M(\veps)$;}
$$
that is,
$$
\mbox{$\Phi(x) + \veps I \subset \Phi(y)$ for each $x,y \in \Omega$ with $|x-y| < \delta_M(\veps)$.}
$$
Interchanging the roles of $x$ and $y$ gives the uniform upper semicontinuity of $\Phi$.

Clearly $F$ is continuous and satisfies \eqref{deg_ell} with respect to $\Phi$. For the property \eqref{F_strict}, notice that for each $x \in \Omega$ the range of
$$
    \left[ {\rm det}(A + M(x)) \right]^{1/N}: \Phi(x) \to \R
$$
is the interval $[0, +\infty)$ since $\Phi(x)$ is connected, $0$ is attained by $A = - M(x) \in \Phi(x)$, and ${\rm det}(tI + M(x)) \to +\infty$ as $t \to +\infty$. It remains only to verify that \eqref{UCF} holds (for each small $\veps > 0$). This follows from the uniform continuity of $f$ and $M$ since
\begin{align*}
F(y, A + \veps I) - F(x, A) & = \left[ {\rm det}(A + \veps I + M(y)) \right]^{\frac{1}{N}} - \left[ {\rm det}(A + M(x)) \right]^{\frac{1}{N}} + f(x) - f(y) \nonumber \\
    & \geq \left[ {\rm det}(A + \veps I + M(y)) \right]^{\frac{1}{N}}- \left[ {\rm det}(A + M(x)) \right]^{\frac{1}{N}} - \omega(|x - y|),
\end{align*}
which will be non-negative if $|x-y| < \delta(\veps)$ for a suitable $\delta$ involving the modulus of continuity of $M$. Hence $\Theta$ will be as advertised.
\end{proof}

We observe that some issues arise if one attempts to apply the classical viscosity solution methods to the equation \eqref{PMAeq}. First, we recall the classical structural condition placed on $F$ to ensure the validity of the comparison principle. The condition (3.14) of Crandall-Ishii-Lions \cite{CrIsLiPL92} (rewritten for $F(x,A)$ which is increasing in $A$) is the following: there exists $\omega : [0, \infty] \rightarrow [0, \infty]$ such that $\omega(0^+)=0$ and
\begin{equation}\label{caba1}
    F(x,A)-F(y,B) \le \omega(\alpha|x-y|^2 + |x-y|)
\end{equation}
whenever $x,y \in \Omega$ and $A,B \in \Ss(N)$ satisfy
\begin{equation}\label{caba2}
- 3 \alpha \left(\begin{array}{cc} I & 0 \\ 0 & I \end{array} \right) \le
\left(\begin{array}{cc} A & 0 \\ 0 & -B \end{array} \right) \le
3 \alpha \left(\begin{array}{cc} I & -I \\ -I & I \end{array} \right)
\end{equation}
This condition may fail in cases where \eqref{UCF} holds.

\begin{exe}\label{SC_CAB} Let $\Omega \subset \R^2$ be any smooth bounded domain containing the origin and let $F(x,A) := [\det(A + M(x))]^{1/2} - f(x)$ with $f \geq 0$ and
\[
M(x) := \left(\begin{array}{cc} |x| & 0 \\ 0 & 0 \end{array} \right) .
\]
While $F$ satisfies \eqref{UCF} by Proposition \ref{PMAcomparison}, $F$ cannot satisfy \eqref{caba1}-\eqref{caba2} on $\Omega$. Indeed, suppose that \eqref{caba1}-\eqref{caba2} holds for some $\omega : [0, \infty] \rightarrow [0, \infty]$ such that $\omega(0^+)=0$. Let $\{x_n\}_{n \in \N} \subset \Omega \setminus \{0\}$ such that $x_n \rightarrow 0$ and set
\[
A_n := \left(\begin{array}{cc} 0 & 0 \\ 0 & \frac{1}{2|x_n|} \end{array} \right), \quad B_n := \left(\begin{array}{cc} 0 & 0 \\ 0 & \frac{1}{|x_n|} \end{array} \right).
\]
We claim that $A_n, B_n$ satisfy the admissibility condition \eqref{caba2} with $3 \alpha_n = 1/|x_n|$. Indeed, $A_n \ge 0$ and $B_n \le I / |x_n| = 3 \alpha_n I$. As for the upper bound in \eqref{caba2},
\[
\langle A_n \xi, \xi \rangle - \langle B_n \eta, \eta \rangle = \frac{\xi_2^2}{2|x_n|} - \frac{\eta_2^2}{|x_n|} = \frac{1}{|x_n|} \left(\frac{\xi_2^2}{2} - \eta_2^2 \right) \le \frac{1}{|x_n|} (\xi_2 - \eta_2 )^2 \le \frac{1}{|x_n|} |\xi - \eta|^2
\]
for all $\xi, \eta \in \R^2$. Therefore,
\begin{align*}
\frac{1}{\sqrt{2}} & = \det(A_n + M(x_n))^{\frac{1}{2}} - \det(B_n + M(0))^{\frac{1}{2}} = F(x_n,A_n)-F(0,B_n) - f(x_n) + f(0) \\
    & \le \omega(\alpha_n |x_n|^2 + |x_n|) + \omega_f(|x_n|) = \omega\left(\frac{|x_n|}{3} + |x_n|\right) + \omega_f(|x_n|)  \rightarrow 0
\end{align*}
as $n \rightarrow \infty$, which is impossible.
\end{exe}

Given that the structural condition \eqref{caba1}-\eqref{caba2} may fail for \eqref{PMAeq}, classical viscosity theory can attempt to remedy this by converting \eqref{PMAeq} by way of a well chosen Bellman operator as in Ishii \cite{Is89} or Crandall-Ishii-Lions \cite{CrIsLiPL92}. This too can fail in certain cases that we can treat, as will be described in the following subsection (see Remarks \ref{PMA1} and \ref{PMA2}). On the other hand, \eqref{UCF} can fail in cases where \eqref{caba1}-\eqref{caba2} holds. For example,  this happens generically for linear equations; that is, if $F(x,A) = {\rm tr}(a(x) A) - f(x)$ and $a(x) \geq 0$ is not a constant coefficient matrix. In such a case, the natural elliptic map $\Theta$ given by $\Theta(x) = \{A \in \Ss(N): F(x,A) \geq 0 \}$ takes values in a half space with inclination that depends on $a(x)$. As $a(x)$ varies, the hyperplane boundaries of $\Theta(x)$ will be divergent as $||A|| \to \infty$ so that the uniform upper semicontinuity \eqref{uusc2} will typically fail. On the other hand, \eqref{caba1}-\eqref{caba2} merely requires sufficient regularity. In this sense, one can note that the approach of using elliptic branches developed following the path initiated by Harvey and Lawson in \cite{HL09} is truly a {\em nonlinear theory}. See also the Cautionary Note 2.7 in Section 2 of their subsequent paper \cite{HL11}. If one desires to be stubborn and use the techniques developed here to treat linear equations, this can be done by again exploiting a suitable reformulation in terms of a {\em nonlinear} Bellman operator. This too will be sketched in the next subsection (see Remark \ref{linear_eqs}, Example \ref{exe:linear} and the subsequent discussion).

\subsection{Reformulations involving Bellman operators}

As indicated above, in order to complete a comparison with classical viscosity solution methods, we will make use of reformulations involving nonlinear Bellman operators. We begin by recalling a class of such operators calibrated to the considerations we have in mind. We consider $F: \Omega \times \Ss(N)$ of the form
\begin{equation}\label{BO}
F(x,A) := \inf_{\beta \in \mathcal{B}} \{ {\rm tr} \left[ a(x, \beta) A \right] + d(x, \beta)  \},
\end{equation}
where $\mathcal{B}$ is any nonempty set,  $\Lambda:= \Omega \times \mathcal{B}$, $d : \Lambda \rightarrow \R$ and $a := \sigma^T \sigma : \Omega \times  \mathcal{B} \rightarrow \Ss(N)$ with $\sigma$ a matrix valued function on $\Lambda$. The canonical assumptions of the standard viscosity theory which ensure that the structural condition \eqref{caba1}-\eqref{caba2} holds for $F$ defined by \eqref{BO} are:
\begin{itemize}
\item[\textit{i)}] The functions $a, d$ are bounded on $\Lambda$,
\item[\textit{ii)}] The function $\sigma$ is Lipschitz continuous in $\Omega$, uniformly in $\mathcal{B}$,
\item[\textit{iii)}] The function $d$ is continuous in $\Omega$, uniformly in $\mathcal{B}$,
\end{itemize}
as one can verify by consulting Section 3 of Ishii \cite{Is89} or Section IV.1 of Ishii-Lions \cite{IsLiPL90}. 

As noted in Example \ref{SC_CAB}, the standard structural condition \eqref{caba1}-\eqref{caba2} which is used for the comparison principle may fail for the perturbed Monge-Amp\`{e}re equation \eqref{PMAeq}, which is due to the way in which $x$-dependence enters explicitly into the second order part of the operator by way of the matrix function $M = M(x)$. When $M \equiv 0$, one knows from Section V.3 of \cite{IsLiPL90} that it can be useful to rewrite the nonlinear operator as
\begin{equation}\label{BO1}
    \left[ {\rm det}(A + M(x)) \right]^{1/N} = \inf_{\beta \in \mathcal{B}} \{ {\rm tr} \left[\beta (A + M(x)) \right] \}, \ \ \mathcal{B}:= \{ \beta \in \Ss(N): \ {\rm det}(\beta) = N^{-N}  \}.
\end{equation}
This representation is valid for $A$ such that $A + M(x) \geq 0$ which is precisely the $\Phi$-admissibility condition $A \in \Phi(x)$ as given in \eqref{PMA_Phi}.

\begin{rem}\label{PMA1}
Notice that \eqref{BO1} is of the form \eqref{BO} with $a(x, \beta) = \beta$ and $d(x, \beta) = {\rm tr}[\beta M(x)]$ and hence $d$ fails to satisfy the conditions \textit{i)} and \textit{iii)} above and hence this approach does not resolve the problem.
\end{rem}

A similar problem has been encountered in Bardi-Mannucci \cite{BaMa08a} in their study of Monge-Amp\`ere type equations in a subelliptic environment such as a homogeneous Carnot group (see also \cite{BaMa13}). Their idea is to take the natural logarithm of \eqref{PMAeq} and rewrite the equation as a Bellman equation of the form
\begin{equation}\label{PMA_BM}
    \inf_{\beta \in \mathcal{B}} \mathcal{L}_{\beta} (x,A) = \log f(x)
\end{equation}
where they need only that the coefficients of the involved linear operators $\mathcal{L}_{\beta}$ possess a suitable uniform regularity with respect to $\beta$, provided that $A$ is bounded away from zero.

\begin{rem}\label{PMA2}
The term $\log f(x)$ in \eqref{PMA_BM} forces $f$ to be taken positive everywhere on $\Omega$ and we have no such restriction in Proposition \ref{PMAcomparison} which also treats the equation in its original form. 
\end{rem}

We return now to the the linear case where
\begin{equation}\label{LE}
    F(x,A) = {\rm tr}[a(x) A] - f(x).
\end{equation}
We will assume that $a = \sigma^T \sigma \in \UC(\Omega, \Ss(N))$ and $f \in \UC(\Omega, \R)$ in order to have the comparison principle. Of course such a linear equation is trivially a Bellman equation of the form \eqref{BO} with $a(x, \beta) = a(x)$ and $d(x, \beta) = -f(x)$. As such, the classical structure conditions \textit{i)}-\textit{iii)} would require only that $\sigma$ be Lipschitz on $\Omega$.

\begin{rem}\label{linear_eqs} It is known that, in general, the comparison principle may fail for a linear equation of the form ${\rm tr}[\sigma^T \sigma(x) D^2u(x)] - u = 0$ if $\sigma$ fails to be Lipschitz (see Section 3 of Ishii \cite{Is89}). In Ishii's counterexample, the two viscosity solutions have unbounded Hessians. As noted at the end of the previous subsection, the unboundedness of $A$ on natural branches $\Theta$ is also the reason that $F$ defined by \eqref{LE} will not satisfy our structure condition \eqref{UCF} in general.
\end{rem}

However, by truncating $F$ where the Hessian could be large, one can force linear examples into the present theory. We will make a truncation with respect to {\em Pucci's minimal operator}, which is defined as follows. For $\lambda, \Lambda$ with $0 < \lambda \leq \Lambda < + \infty$ set
$$
    \mathcal{B}_{\lambda, \Lambda} := \{ \beta \in \Ss(N): \lambda \leq  \lambda_1(\beta) \ldots  \leq \lambda_N(\beta) \leq \Lambda \}
$$
and define the functional $\mathcal{M}^{-}_{\lambda, \Lambda}: \Ss(N) \to \R$ by
$$
    \mathcal{M}^{-}_{\lambda, \Lambda}(A) := \inf_{\beta \in \mathcal{B}_{\lambda, \Lambda}} {\rm tr}[\beta A] = \lambda \sum_{\lambda_k(A) > 0} \lambda_k(A) - \Lambda \sum_{\lambda_k(A) < 0} \lambda_k(A).
$$

\begin{exe}\label{exe:linear}
 Let $F(x,A) = {\rm tr}[a(x) A] - f(x)$ with $f \in \UC(\Omega, \R)$ and $a \in \UC(\Omega, \Ss(N))$ a non constant map such that
\begin{equation}\label{a_conditions}
    a(x) \in  \mathcal{B}_{\lambda, \Lambda}, \ \ \text{for each} \ x \in \Omega.
\end{equation}
For each $h \in \R$ define the truncation $F_h: \Omega \times \Ss(N) \to \R$ by
$$
    F_h(x,A) := \min\{ F(x,A), \mathcal{M}^{-}_{\frac{\lambda}{2}, \Lambda}(A) + h \}.
$$
The set valued map $\Theta_h: \Omega \to \Ss(N)$ defined by
\begin{equation}\label{Thetah}
    \Theta_h(x) := \{ A \in \Ss(N): F_h(x,A) \geq 0 \}
\end{equation}
is a uniformly upper semicontinuous elliptic map, and hence the comparison principle of Theorem \ref{thm:CP} holds for $\Theta_h$.

Indeed, for each $x \in \Omega$, the subset of $\Ss(N)$ defined by \eqref{Thetah} is clearly non empty, closed and proper. It also satisfies the positivity property \eqref{elliptic2} since
for each $A \in \Theta_h(x)$ and each $P \in \mathcal{P}$ one has
$$
\mbox{$F(x, A+P) \geq F(x,A) \geq 0$ \quad and \quad $\mathcal{M}^{-}_{\frac{\lambda}{2}, \Lambda}(A+P) + h \geq \mathcal{M}^{-}_{\frac{\lambda}{2}, \Lambda}(A) + h \geq 0$,}
$$
and hence $\Theta_h$ is an elliptic map for each $h \in \R$.

    For the uniform upper semicontinuity, let $\veps > 0$ and $x \in \Omega$ be arbitrary. For each $B \in \Theta_h(x) + \veps I$ one has $B - \veps I \in \Theta_h(x)$ and hence by \eqref{Thetah} we have
\begin{equation}\label{Lexe1}
\mbox{$F(x, B - \veps I) \geq 0$ \quad and \quad $\mathcal{M}^{-}_{\frac{\lambda}{2}, \Lambda}(B - \veps I) + h \geq 0$,}
\end{equation}
and we need $B \in \Theta(y)$; that is,
\begin{equation}\label{Lexe2}
\mbox{$F(y, B) \geq 0$ \quad and \quad $\mathcal{M}^{-}_{\frac{\lambda}{2}, \Lambda}(B) + h \geq 0$,}
\end{equation}
for each $y \in \Omega$ such that $|x-y| < \delta$ for some $\delta = \delta(\veps)$ which may also depend on $h, ||f||, \lambda$ and $ \Lambda$ but is independent of $B,x$ and $y$. The second request in \eqref{Lexe2} follows from the corresponding part in \eqref{Lexe1} and the monotonicity of the Pucci operator. For the first request in \eqref{Lexe2}, without loss of generality we can restrict attention to those $(y,B) \in \Omega \times [\Theta_h(x) + \veps I]$ with $F(y,B) \leq 1$. Since $a(y) \in \mathcal{B}_{\lambda, \Lambda}$ one has
\begin{equation}\label{Lexe3}
\mathcal{M}^{-}_{\lambda, \Lambda}(B) \leq {\rm tr}[a(y)B] = F(y,B) + f(y) \leq 1 + f(y) \leq 1 + ||f||,
\end{equation}
where $f \in UC(\Omega, \R)$ is bounded. Denoting the Pucci operator by $\mathcal{M}^{-}_{\lambda, \Lambda}(B) := \lambda {\rm tr}[B^+] - \Lambda {\rm tr}[B^-]$ and using \eqref{Lexe2} - \eqref{Lexe3} we find
\begin{equation}\label{Lexe4}
\mbox{ $\lambda {\rm tr}[B^+] - \Lambda {\rm tr}[B^-] \leq 1 + ||f||$ \quad and \quad
    $- \lambda {\rm tr}[B^+] + 2 \Lambda {\rm tr}[B^-] \leq 2h$.}
\end{equation}
Linear combinations of \eqref{Lexe4} yield
$$
    \mbox{ ${\rm tr}[B^-] \leq \frac{1 + 2h + ||f||}{\Lambda}$ \quad and \quad ${\rm tr}[B^+] \leq \frac{2 + 2h + ||f||}{\Lambda}$,}
$$
and hence there exists $C = C(h, ||f||, \lambda, \Lambda)$ such that
\begin{equation}\label{Lexe5}
\mbox{ $||B|| \leq C$ for each $B \in \Theta_h(x) + \veps I$.}
\end{equation}
Using the uniform continuity of $a$ and $f$, the ellipticity bound on $a \geq \lambda I$, the bound \eqref{Lexe5} and the fact that $B \in \Theta_h(x)$, we find

\begin{align*}
F(y, & B)  =  F(y, B + \veps I - \veps I) = {\rm tr}[a(y)(B - \veps I) + \veps a(y) ] - f(y) \\
     = & \ {\rm tr}[(a(y)-a(x))(B - \veps I)] + {\rm tr}[a(x)(B - \veps I)] + {\rm tr}[\veps a(y) ] + (f(x) - f(y)) - f(x) \\
    \geq & \ \veps \lambda N - \omega_f(|x-y|) - \omega_a(|x-y|) [ C(h, ||f||, \lambda, \Lambda) + \veps N ],
\end{align*}
which is non negative if $|x-y| < \delta(\veps, h, ||f||, \lambda, \Lambda)$. This finishes the needed claim \eqref{Lexe2}.
\end{exe}

We conclude with a few final remarks about this example. The truncation $F_h$ can be realized as a Bellman operator
$$
    F_h(x,A) = \inf_{\gamma \in \mathcal{C}_{\lambda, \Lambda}} \left\{ {\rm tr}[\tilde{a}(x, \gamma) A] + d(x,\gamma) \right\}
$$
by considering the parameter space
$$
\mathcal{C}_{\lambda, \Lambda} = \{ \gamma \in \UC(\Omega, \Ss(N)): \gamma \equiv \beta \in \mathcal{B}_{\frac{\lambda}{2}, \Lambda} \ \text{or} \ \gamma = a \},
$$
where $a$ is the (non constant) matrix coefficient in \eqref{LE}, and defining
$$
    \tilde{a}(x, \gamma) = \left\{   \begin{array}{c} a(x)  \\
    \beta \end{array} \right. \ \ \text{and} \ \  d(x, \gamma) = \left\{ \begin{array}{c} -f(x)  \\
    h \end{array} \right. \quad \text{if} \quad
    \left\{ \begin{array}{l} \gamma = a \\
    \gamma \equiv \beta \end{array} \right. .
$$
For this Bellman operator, the coefficient $\tilde{a}(x,\gamma)$ fails to be $\sigma^T \sigma$ with $\sigma$ Lipschitz for the parameter $\gamma = a$ unless $a$ has this property, which we do not assume (this is the standard structure condition \textit{ii)} appearing after formula \eqref{BO}). We assume only that $a$ is $\UC(\Omega, \Ss(N))$ with the uniform ellipticity condition \eqref{a_conditions}.

Finally, since $F_h(x, \cdot)$ is concave and uniformly elliptic one has H\"{o}lder regularity theory for the viscosity solutions of $F_h(x, D^2u) = 0$ provided that $a \in C^{0, \alpha}(\overline{\Omega})$ with $\alpha \in (0,1)$ (e.g., see Chapter 8 of Caffarelli-Cabr\`e \cite{CaCa95}). In particular, viscosity solutions are classical solutions with a uniform bound on the Hessian and hence $F_h$ reduces to $F$ for each $h$ such that $h \geq \bar{h}(N, \lambda, \Lambda, \alpha)$. In this contorted way, one can force a uniformly elliptic linear equation into the intrinsically nonlinear theory initiated by Harvey and Lawson.

\bibliographystyle{plain}

\bibliography{bib_cirant_payne} 

\end{document}